\DeclareMathAlphabet\gothic{U}{euf}{m}{n}
\def\eqnarray{\stepcounter{equation}\let\@currentlabel=\theequation
\global\@eqnswtrue
\tabskip\@centering\let\\=\@eqncr
$$\halign to \displaywidth\bgroup\hfil\global\@eqcnt\z@
$\displaystyle\tabskip\z@{##}$&\global\@eqcnt\@ne
\hfil$\displaystyle{{}##{}}$\hfil
&\global\@eqcnt\tw@ $\displaystyle{##}$\hfil
\tabskip\@centering&\llap{##}\tabskip\z@\cr}
\def\endeqnarray{\@@eqncr\egroup
\global\advance\c@equation\m@ne$$\global\@ignoretrue}
\def\@yeqncr{\@ifnextchar [{\@xeqncr}{\@xeqncr[5pt]}}
\newtheorem{lemm}{Lemma}[section]
\newtheorem{thrm}[lemm]{Theorem}
\newtheorem{coro}[lemm]{Corollary}
\newtheorem{eeg}[lemm]{Example}
\newtheorem{rrema}[lemm]{Remark}
\newtheorem{prop}[lemm]{Proposition}
\newtheorem{ddefi}[lemm]{Definition}
\newcommand{\supp}{\mathrm{supp\,}}
\newcommand{\ls}{\lesssim}
\newcommand{\D}{\partial}
\newcommand{\gota}{\gothic{a}}
\newcommand{\di}{\mathrm{div}}
\newcommand{\lesi}{\lesssim}
\newcommand{\f}{\frac}
\newcommand{\vc}{\infty}
\newcounter{teller}
\newenvironment{tabel}{\begin{list}%
{\rm  (\alph{teller})\hfill}{\usecounter{teller} \leftmargin=1.1cm
\labelwidth=1.1cm \labelsep=0cm \parsep=0cm}
          }{\end{list}}
\newcounter{tellerr}
\newenvironment{tabeleq}{\begin{list}%
{\rm  (\roman{tellerr})\hfill}{\usecounter{tellerr} \leftmargin=1.1cm
\labelwidth=1.1cm \labelsep=0cm \parsep=0cm}
             }{\end{list}}
\newcounter{tellerrr}
\newcounter{proofstep}
\newcommand{\Ni}{\mathbb{N}}
\newcommand{\Ri}{\mathbb{R}}
\newcommand{\Ci}{\mathbb{C}}
\newcommand{\loc}{\mathrm{loc}}
\newcommand{\ca}{\quad \mbox{and} \quad}
\newlength{\hightcharacter}
\newlength{\widthcharacter}
\renewcommand*\env@matrix[1][*\c@MaxMatrixCols c]{%
\hskip -\arraycolsep
\let\@ifnextchar\new@ifnextchar
\array{#1}}
\def\barint{\kern4pt
	\raise3.4pt\hbox{\vrule height.8pt width7pt}%
	\kern-8.0pt 
	\int}
\title{Heat kernels of generalized degenerate Schr\"odinger operators and Hardy spaces}
\author{The Anh Bui}
\address{Department of Mathematics of Statistics, Macquarie University, NSW 2109,
	Australia}
\email{the.bui@mq.edu.au, bt\_anh80@yahoo.com}
\author{Tan Duc Do}
\address{Vietnamese-German University, Binh Duong, Vietnam}
\email{tan.dd@vgu.edu.vn}
\author{Nguyen Ngoc Trong}
\address{Ho Chi Minh City University of Education, Ho Chi Minh City, Vietnam}
\email{trongnn@hcmue.edu.vn}
\thanks{2010 {\it Mathematics Subject Classification}: 47F05, 65M80, 35K08, 42B30}
\thanks{{\it Key words and phrases}: generalized degenerate Schr\"odinger operator, fundamental solution, heat kernel, Hardy space.}
\begin{document}
\begin{abstract}
Let $\displaystyle L = -\frac{1}{w} \, \di(A \, \nabla u) + \mu$	be the generalized degenerate Schr\"odinger operator in $L^2_w(\Ri^d)$ with $d\ge 3$ with suitable weight $w$ and measure $\mu$. The main aim of this paper is threefold. First, we obtain an upper bound for the fundamental solution of the operator $L$. Secondly, we prove some estimates for the heat kernel of $L$ including an upper bound, the H\"older continuity and a comparison estimate. Finally, we apply the results to study the maximal function characterization for the Hardy spaces associated to the critical function generated by the operator $L$.
\end{abstract}

\maketitle

\setcounter{page}{1}

\section{Introduction}

Consider the generalized degenerate Schr\"odinger operator of the form 
\begin{equation}
\label{eq-L}
L = -\frac{1}{w} \, \di(A \, \nabla u) + \mu
\end{equation}
in $L^2_w(\Ri^d)$ with $d\ge 3$. Here $w$, $A$ and $\mu$ satisfy the following conditions:

\begin{itemize}
\item The coefficient matrix $A$ is real symmetric with measurable entries.
Furthermore there exists a constant $\Lambda \ge 1$ such that 
\begin{equation} \label{degenerate}
\hskip 3cm\Lambda^{-1} \, w(x) \, |\xi|^2 \leq A(x)\xi \cdot \overline{\xi} \leq \Lambda \, w(x) \, |\xi|^2
\end{equation}
for a.e.  $x \in \Ri^d$ and for all $\xi \in \Ci^d$.

\item The weight $w \in A_2$. See below for the precise definition of the class $A_2$. Moreover, we also assume that  $w \in RD_\beta$ with some $\beta>2$, i.e., there exists a $C > 0$ such that 
\begin{equation} \label{(RD)}
\hskip 3cm w(B(x,tr)) \ge C \, t^\beta \, w(B(x,r))
\end{equation}
for all $t > 1$ and $x\in \Ri^d$.

\item $\mu$ is a positive Radon measure satisfying the following conditions:
	\begin{enumerate}[{\rm (i)}]
		\item There exist $C_0>0$ and $\delta>0$ such that 
		\begin{equation}\label{(M1)}
	\hskip 3cm 	\frac{r^2}{w(B(x,r))} \, \pi(B(x,r)) 
		\le C_0 \, \left( \frac{r}{R} \right)^{\delta} \, \frac{R^2}{w(B(x,R))} \, \pi(B(x,R))
		\end{equation}
		for all $x \in \Ri^d$ and $R>r> 0$, where $d\pi = w \, d\mu$.
		
		\item There exists a $C_1 > 0$ such that
		\begin{equation}\label{(M2)}
	\hskip 3cm 	\pi(B(x,2r)) 
		\le C_1 \, \left( \pi(B(x,r)) + \frac{w(B(x,r))}{r^2} \right)
		\end{equation}
		for all $x \in \Ri^d$ and $r > 0$, where $d\pi = w \, d\mu$.
	\end{enumerate}
\end{itemize}
A precise description of $L$ via form method is given in Section \ref{operator def}. 

Let  $w\in L^1_{{\rm loc}}(\Ri^d)$ be a nonnegative locally integrable function. For $p\in(1,\vc)$, we say that $w\in A_p$ if 
\[
\left( \frac{1}{|B|} \, \int_B w(x) \, dx \right) \, 
\left( \frac{1}{|B|} \, \int_B w(x)^{1-p'} \, dx \right)^{p-1}
\le C
\]
for all balls $B\subset \Ri^d$.

Note that if $w\in A_p$ then 
\begin{equation}\label{doubling from w}
w(\lambda B)\lesi \lambda^{dp}w(B)
\end{equation}
for all balls $B$ and $\lambda\ge 1$.

We would like to summarize the body of research regarding the generalized degenerate Schr\"odinger operator of the form \eqref{eq-L}.
\begin{enumerate}[{\rm (a)}]
	\item The well-known form of the operator $L$ is when $w=1$, $A(x)=I$ and $\mu(x) =V(x)dx$ with $V$ satisfying a reverse H\"older's inequality, i.e.,
	\[
	\Big(\f{1}{|B|}V(x)^qdx\Big)^{1/q}\lesi \f{1}{|B|}V(x)dx
	\]
	for all balls $B$ with $q\ge d/2$. Such Schr\"odinger operators were introduced in \cite{Shen} of which the estimates for the fundamental solution of $L$ and the Riesz transforms were investigated. The theory of new Hardy spaces associated to the operator $L$ was treated in \cite{DZ1, DZ2, BDK1}.  
	\item The degenerate elliptic operators corresponding to $L$ of the form \eqref{eq-L} with $\mu=0$ were studied in \cite{FKS} in which Fabes, Kenig and  Serapioni proved some results on local regularity of degenerate elliptic operators on domains. The Hardy spaces $H^1_L$ associated to the degenerate Schr\"odinger with $\mu(x)=V(x)dw(x)$, where $V(x)$ satisfies a reverse H\"older's inequality were obtained in \cite{Dzi}.
	
	\item The study of Schr\"odinger operators of the form \eqref{eq-L} with $\mu$ being a Radon measure is less well-known. The case $w=1$ and $A=I$ was first introduced by Shen \cite{Shen}. Note that in this case the measure $\mu$ contains the class of measures satisfying scale--invariant  Kato condition. See \cite[p.522]{Shen}. In this work, Shen proved the bounds for the fundamental solutions of $L$ and the boundedness of the Riesz transforms and the imaginary powers of $L$. The Hardy spaces $H^1_L$ related to these operators have been studied recently in \cite{WY}.    
\end{enumerate}

Motivated by the above body of work, this paper will work with the generalized degenerate Schr\"odinger operators of the form \eqref{eq-L}. We would like to describe our main results.
\begin{enumerate}[{\rm (a)}]
	\item First, we prove an upper bound for the fundamental solution to the operator $L$. See Theorem \ref{main}. In a particular case when $w=1$ and $A=I$ our result is in line with that in \cite{Shen}. Note that our method can be modified to obtain the exponential decay in the upper bound, but we do not aim to pursue this problem since the polynomial decay in the upper bound is enough for our purpose.
	\item Secondly, we obtain some estimates for the heat kernel of the semigroup $e^{-tL}$ generated by $L$. The results include the upper bound, the H\"older continuity estimates and the comparison estimates. See Theorem \ref{main 2}. It emphasizes that our estimates recover known estimates in the corresponding particular case of $L$ such as Schr\"odinger operator on $\Ri^d$ (see \cite{DZ1}), degenerate Schr\"odinger operator (see \cite{Dzi}) and the generalized  Schr\"odinger operator on $\Ri^d$ (see \cite{WY}). 
	\item The last result relates to the theory of Hardy spaces associated to differential operators. The theory of Hardy spaces adapted to general operators was initially introduced by \cite{ADM}. Then it  has become an interesting topic in harmonic analysis and has attracted a great deal of attention. See for example \cite{DY, HM, HLMMY} and the references therein. In the last result, we   apply the findings on the heat kernel of $L$ to prove the maximal function characterization to the Hardy spaces associated to $L$. Let us remind that the maximal function characterization of the Hardy spaces  has a long history. The maximal function characterizations for the classical Hardy spaces were obtained in \cite{BGS, CT}. Then the results were extended to new Hardy spaces $H^1_L$ associated to Schr\"odinger operators $L$ in various settings. See \cite{DZ1, DZ2, YZ, WY, Dzi}. In Theorem \ref{mainthm 3} we prove the maximal function for the new Hardy spaces $H^p_L$ for $p\le 1$. We note that our result not only recovers known results in  \cite{DZ1, DZ2, YZ, WY, Dzi}, but also extends those to the range $p< 1$.    
\end{enumerate}

To formulate our main results the following notion plays a fundamental role.
For all $x \in \Ri^d$ define
\begin{equation}\label{critical function}
\rho_w(x,\mu) 
:= \frac{1}{m_w(x,\mu)} 
:= \sup \left\{ r>0: \frac{r^2}{w(B(x,r))} \, \pi(B(x,r)) \le C_1 \right\},
\end{equation}
where $C_1$ is given by \eqref{(M2)}, which is called the \emph{critical function}.

Our first main result on the upper bound of the fundamental solution of $L$ is as follows.

\begin{thrm} \label{main}
	Let $\Gamma_\mu(x,\cdot)$ be the fundamental solution of $L$.
	Then for every $k \in \Ni$ there exists a $C=C(k) > 0$ such that 
	\begin{equation}\label{upper bound for gammamu}
	0
	\le \Gamma_\mu(x,y)
	\le \frac{C}{\big( 1 + |x-y| \, m_w(x,\mu) \big)^k} \frac{|x-y|^2}{w(B(x,|x-y|))}
	\end{equation}
	for all $x, y \in \Ri^d$ such that $x \ne y$.
\end{thrm}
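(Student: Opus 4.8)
The plan is to derive the upper bound for the fundamental solution $\Gamma_\mu$ from two ingredients: the known Green function estimate for the degenerate elliptic operator $L_0 = -\frac{1}{w}\,\di(A\,\nabla\cdot)$ (i.e.\ the case $\mu = 0$), which behaves like $\frac{|x-y|^2}{w(B(x,|x-y|))}$ by the results of Fabes–Kenig–Serapioni together with the $A_2$ and $RD_\beta$ hypotheses on $w$, and the Fefferman–Phong / Shen-type machinery that converts the scale-invariant smallness condition \eqref{(M1)}–\eqref{(M2)} on $\mu$ into polynomial decay in terms of the critical function $m_w(\cdot,\mu)$. I would first record the baseline bound $0 \le \Gamma_\mu(x,y) \le \Gamma_0(x,y) \lesssim \frac{|x-y|^2}{w(B(x,|x-y|))}$ via the maximum principle / domination $e^{-tL} \le e^{-tL_0}$ and the subordination formula $\Gamma_\mu(x,y) = \int_0^\infty p_t^\mu(x,y)\,dt$, where $p_t^\mu$ is the heat kernel of $L$; positivity is immediate from positivity of the semigroup.

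The core of the argument is the improvement of this to the factor $(1 + |x-y|\,m_w(x,\mu))^{-k}$. Here I would adapt Shen's iteration. Set $R = |x-y|$ and $r = \rho_w(x,\mu) = 1/m_w(x,\mu)$; if $R \le r$ there is nothing to prove, so assume $R > r$. The idea is to exploit the equation $L\Gamma_\mu(\cdot,y) = 0$ away from $y$ together with the fact that $\Gamma_\mu$ is a subsolution of $L_0$ that additionally "loses mass" at rate controlled by $\mu$. Concretely, one shows a decay estimate of the form
\begin{equation}
\int_{B(x,2r)} \Gamma_\mu(z,y)\,d\pi(z) \lesssim \Big(\tfrac{r}{R}\Big)^{\varepsilon}\, \Gamma_\mu\text{-quantities on larger balls},
\end{equation}
using \eqref{(M1)} to get the scale gain, and then iterate across the dyadic annuli between $B(x,r)$ and $B(x,R)$; each step gains a fixed power of $r/R$, and after $\sim k/\varepsilon$ steps one extracts the $k$-th power. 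The auxiliary functional-analytic facts needed are the doubling of $w$ (from $A_2$, \eqref{doubling from w}), the reverse-doubling \eqref{(RD)} to compare $w(B(x,R))$ with $w(B(x,r))$, a Fefferman–Phong-type inequality $\int |u|^2\,d\pi \lesssim r^{-2}\int_{B} w|u|^2 + \int_B$(gradient term) valid at the critical scale, and a Caccioppoli / mean-value inequality for $L$-(sub)solutions adapted to the degenerate weight $w$.

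The main obstacle I anticipate is making the iteration rigorous in the degenerate weighted setting: one must carefully track how the constants in the Caccioppoli inequality, the mean-value inequality, and the Fefferman–Phong inequality depend on $\Lambda$, the $A_2$ constant, and $\beta$, and verify that the scale gain from \eqref{(M1)} genuinely compounds rather than being absorbed into a geometrically growing constant. A secondary technical point is the local integrability and pointwise meaning of $\Gamma_\mu$ near the diagonal — one works with $\Gamma_\mu$ restricted to annuli $|x-y| \sim s$ and uses De Giorgi–Nash–Moser regularity for the weighted degenerate operator (available by \cite{FKS}) to pass from integral bounds to the pointwise bound \eqref{upper bound for gammamu}. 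Once the iteration yields the decay for the integral average of $\Gamma_\mu$ over $B(x, R)$, the pointwise bound follows from the local boundedness estimate for subsolutions of $L_0$ combined with the baseline bound already established.
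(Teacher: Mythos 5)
Your overall architecture is right: the baseline bound via the domination $0\le\Gamma_\mu\le C\,\Gamma_0$ together with Proposition \ref{funda0}, the role of De Giorgi--Nash--Moser regularity (i.e.\ the mean-value inequality for subsolutions of $L_0$, Lemma \ref{u sub L0}), and the two auxiliary tools you list --- a weighted Caccioppoli inequality (Lemma \ref{caccioppoli}) and a Fefferman--Phong-type inequality at the critical scale (Proposition \ref{norm equiv}) --- are exactly the ingredients the paper uses in Lemma \ref{next to main} and Proposition \ref{sub main}. (The paper actually establishes the stronger statement for the resolvent kernel $\Gamma_\mu(\cdot,\cdot,\tau)$ of $L+i\tau$, because it is needed later for the heat-kernel bounds; Theorem \ref{main} is the $\tau=0$ case. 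This is not a gap in your plan, only a difference in scope.)

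The genuine issue is the mechanism of the iteration. You propose to ``iterate across the dyadic annuli between $B(x,r)$ and $B(x,R)$'' with each step gaining $(r/R)^\varepsilon$ directly from \eqref{(M1)}, and to take $\sim k/\varepsilon$ steps. That scheme is not coherent as written: there are only about $\log_2(R/r)$ dyadic scales between $r$ and $R$, so the number of available steps is fixed by $R/r$, whereas you need the number of iterations to grow with $k$. In the paper's proof (Lemma \ref{next to main}) the iteration happens at the \emph{single} scale $R=|x-y|$: one fixes the ball $B=B(x_0,R)$, introduces the nested family $B_j=(1+2^{-j\lceil(k_0+1)/2\rceil})B$ (all of radius comparable to $R$), and at each step applies the Caccioppoli inequality to pass from $B_{j-1}$ to $B_j$, then the Fefferman--Phong inequality (Proposition \ref{norm equiv}) together with Proposition \ref{crit}(iv) to convert $\int_{B_j} m_w(\cdot,\mu)^2|u|^2\,dw\lesssim R^{-2}\int_{B_{j-1}}|u|^2\,dw$ into the multiplicative gain $(1+R\,m_w(x_0,\mu))^{-2/(k_0+1)}$. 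Because the nested balls can be taken arbitrarily close to $B$, one may iterate $k\lceil(k_0+1)/2\rceil$ times for any $k$. Note also that \eqref{(M1)} does not enter the iteration directly; its role is hidden inside Proposition \ref{crit}(iv), which controls how $m_w(\cdot,\mu)$ varies across a ball and is what makes $m_w(\cdot,\mu)^2\approx m_w(x_0,\mu)^2$ on $B_j$. You should replace the ``dyadic annuli'' picture with this single-scale nested-ball iteration; otherwise the constants at each step cannot be separated from the number of steps, and the claimed arbitrary polynomial decay will not come out.
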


The next result will be on the heat kernel estimates of $L$. Before coming to details, we recall the principle part of $L$ given by
\[
L_0 u = -\frac{1}{w} \, \di( A \, \nabla u).
\]
Denote by $h_t(\cdot,\cdot)$ and $k_t(\cdot,\cdot)$ the kernels of $e^{-tL_0}$ and $e^{-tL}$ for $t>0$, respectively. Then we have the following.

\begin{thrm} \label{main 2}
Let $k_t(\cdot,\cdot)$ be the kernel of  $e^{-tL}$. Then we have:
\begin{enumerate}[{\rm (i)}]
	\item There exists a $c > 0$ such that for all $N \ge 0$ there exists a $C = C(N) > 0$ satisfying
	\[
	0
	\le k_t(x,y)
	\le \frac{C}{w(B(x,\sqrt{t}))} \, \exp\Big(-\f{|x-y|^2}{ct}\Big) 
	\left( 1 + \frac{\sqrt{t}}{\rho_w(x,\mu)}+ \frac{\sqrt{t}}{\rho_w(y,\mu)} \right)^{-N}
	\]
	for all $x, y \in \Ri^d$ and $t > 0$.
	
	\item For any $0<\theta<\min \{\delta,\gamma\}$, there exists a $C>0$ such that 
	\begin{equation}\label{Holder p_t(x,y)}
	|k_{t}(x,y)-k_{t}(x,\overline{y})|\leq   C \, \Big(\f{|y-\overline{y}|}{\sqrt{t}}\Big)^{\theta}\f{1}{w(B(x,\sqrt{t}))}\exp\Big(-\f{|x-y|^2}{ct}\Big)
	\end{equation}
	for all $t>0$ and $|y-\overline{y}|<\sqrt t$, where $\delta$ is the constant in \eqref{(M1)} and $\gamma$ is the constant in Proposition \ref{ht prop} (ii).
	
	\item There exist constants $C, c > 0$ such that 
		\begin{equation}\label{thm- difference 2 heat kernels}
		0 \le h_t(x,y) - k_t(x,y) \le C \, \frac{1}{w(B(x,\sqrt{t}))} \, \exp\Big(-\f{|x-y|^2}{ct}\Big)\, \left( \frac{\sqrt{t}}{\rho_w(x,\mu)} \right)^{\delta} 
		\end{equation}
		for all $t > 0$ and $x, y \in \Ri^d$.
\end{enumerate}

\end{thrm}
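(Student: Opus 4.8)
The plan is to derive all three statements from the fundamental solution bound in Theorem \ref{main}, the known properties of the principal-part heat kernel $h_t$ (Gaussian upper bound and Hölder continuity, as recorded in Proposition \ref{ht prop}), and the perturbation formula relating $e^{-tL}$ to $e^{-tL_0}$. The starting point for (iii) is the Duhamel (perturbation) identity
\[
h_t(x,y) - k_t(x,y) = \int_0^t \int_{\Ri^d} h_{t-s}(x,z) \, k_s(z,y) \, d\mu(z) \, ds,
\]
which is nonnegative since $\mu \ge 0$ and both kernels are nonnegative; this immediately gives the lower bound $h_t - k_t \ge 0$ and, a fortiori, $k_t \ge 0$. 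To obtain the quantitative upper bound in \eqref{thm- difference 2 heat kernels}, I would bound $k_s(z,y) \le h_s(z,y)$ and insert the Gaussian upper bound for $h$ (Proposition \ref{ht prop}), reducing matters to estimating
\[
\int_0^t \int_{\Ri^d} \frac{1}{w(B(x,\sqrt{t-s}))}\exp\Big(-\frac{|x-z|^2}{c(t-s)}\Big)\frac{1}{w(B(z,\sqrt{s}))}\exp\Big(-\frac{|z-y|^2}{cs}\Big)\, d\mu(z)\, ds.
\]
Using the doubling property \eqref{doubling from w} of $w$ and the standard Gaussian-convolution bookkeeping, this is controlled by $\frac{1}{w(B(x,\sqrt t))}\exp(-|x-y|^2/c't)$ times $\int_0^t \frac{1}{w(B(x,\sqrt s))}\,\pi(B(x,\sqrt s))\,\frac{ds}{s}$ (after absorbing $d\mu = \frac{1}{w}\,d\pi$ into balls); the remaining time integral is handled by the scaling hypothesis \eqref{(M1)}, which says precisely that $\frac{r^2}{w(B(x,r))}\pi(B(x,r))$ grows at most like $r^\delta$ up to the scale $\rho_w(x,\mu)$, so the integral up to $\sqrt t$ produces the factor $(\sqrt t/\rho_w(x,\mu))^\delta$. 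This is the main technical obstacle: carefully splitting the region of integration into $|x-z| \lesssim |x-y|$ versus $|z-y| \lesssim |x-y|$ (and $s < t/2$ versus $s > t/2$), and tracking how the weight of balls centred at different points compares, so that all the auxiliary Gaussian factors can be absorbed while keeping the exponent essentially $|x-y|^2/t$.

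For part (i), the upper bound $k_t \le h_t \le \frac{C}{w(B(x,\sqrt t))}\exp(-|x-y|^2/ct)$ is immediate from $0 \le k_t \le h_t$ and Proposition \ref{ht prop}, giving the $N=0$ case. To upgrade to arbitrary $N$ and produce the decay factor $(1 + \sqrt t/\rho_w(x,\mu) + \sqrt t/\rho_w(y,\mu))^{-N}$, I would iterate the perturbation identity (or equivalently use the fundamental-solution bound of Theorem \ref{main} together with the subordination formula $\int_0^\infty k_t(x,y)\,dt = \Gamma_\mu(x,y)$, which links the heat kernel to $\Gamma_\mu$): the bound $\Gamma_\mu(x,y) \le C_k (1+|x-y|m_w(x,\mu))^{-k}\frac{|x-y|^2}{w(B(x,|x-y|))}$ encodes exactly the extra polynomial decay in the critical-function scale, and a standard argument (integrating the Gaussian bound against the extra smallness coming from the $\mu$-term, scale by scale, using \eqref{(M1)} and \eqref{(M2)}) transfers this decay from $\Gamma_\mu$ to $k_t$. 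The symmetrization in $x$ and $y$ follows because $L$ is self-adjoint on $L^2_w$, so $k_t(x,y) = k_t(y,x)$, and one takes the geometric mean of the two one-sided estimates.

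For part (ii), the Hölder continuity in the $y$-variable, I would again write $k_t = h_t - (h_t - k_t)$ and treat the two pieces separately. For $h_t$ the Hölder bound is exactly Proposition \ref{ht prop}(ii), with exponent up to $\gamma$. For the difference $h_t - k_t$, I would differentiate (or take finite differences of) the Duhamel formula in the $y$-slot: the inner kernel $k_s(z,y)$ inherits Hölder regularity in $y$ from $h_s$ plus an induction on the already-established bounds, and one must balance the gain $(|y-\bar y|/\sqrt t)^\theta$ against the loss coming from the singularity of $h_s$ as $s \to 0$; choosing $\theta < \min\{\delta,\gamma\}$ is exactly what makes the time integral $\int_0^t s^{-1+(\delta-\theta)/2}\,ds$ (roughly) converge, since the $\delta$-room from \eqref{(M1)} compensates for $\theta$. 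As with (iii), the bookkeeping of Gaussian factors and ball-weight comparisons across the time integral is the delicate part, but it is of the same nature as in (iii) and can be carried out once the region-splitting scheme there is set up.
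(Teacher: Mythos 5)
Your plan for (iii) matches the paper: Duhamel's formula with perturbation measure $d\pi = w\,d\mu$ (not $d\mu$ itself), a split at $s=t/2$, and a time integral controlled by the scaling hypothesis \eqref{(M1)} --- the paper packages that last step as Lemma~\ref{Schwartz}. The genuine gaps are in (i) and (ii). For (i), Theorem~\ref{main} only bounds $\Gamma_\mu(x,y)=\Gamma_\mu(x,y,0)$, and the identity $\int_0^\infty k_t\,dt=\Gamma_\mu$ cannot by itself yield a pointwise-in-$t$ polynomial decay. The paper's actual mechanism for $|x-y|\ge\sqrt t$ is the $\tau$-dependent bound on $\Gamma_\mu(x,y,\tau)$ (Proposition~\ref{sub main}) inserted into the inverse Fourier representation $k_t(x,y)=C\int_\Ri e^{it\tau}\Gamma_\mu(x,y,\tau)\,d\tau$; and for $|x-y|<\sqrt t$ it is a bootstrap built on analyticity of $e^{-tL}$ (giving $\|\D_t e^{-tL}\|_{L^2_w\to L^2_w}\ls 1/t$), the representation $k_t(x,y)=\int\Gamma_\mu(x,z)\,\D_t k_t(z,y)\,dw(z)$ estimated over dyadic annuli around $x$, and $N(k_0+1)$ rounds of feeding the improved bound on $k_t$ back into the $\D_t k_t$ estimate. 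Neither the Fourier/resolvent step nor the analyticity bootstrap appears in your proposal, and the alternative you offer (iterating Duhamel, or invoking subordination) does not supply the needed decay in the regime $\sqrt t\gtrsim\rho_w(x,\mu)$.

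For (ii), taking $y$-differences directly in $q_t(x,y)=\int_0^t\int h_s(x,z)\,k_{t-s}(z,y)\,d\pi(z)\,ds$ is circular as you describe it: the variable $y$ sits in the $k$-kernel, whose H\"older regularity is precisely what you are trying to prove. The paper instead proves H\"older continuity of $q_t$ in the $x$-variable (Proposition~\ref{Proposition: holder of qt}), where $h_s$ sits and Proposition~\ref{ht prop}(ii) applies, and then passes to the $y$-variable via the symmetry $k_t(x,y)=k_t(y,x)$ and $q_t(x,y)=q_t(y,x)$. Moreover, that estimate on $q_t$ only holds when $|y-\overline y|<|x-y|/4$ and $|y-\overline y|<\rho_w(y,\mu)$; the complementary regimes need separate arguments --- the decay from (i) when $|y-\overline y|\ge\rho_w(y,\mu)$, and a Chapman--Kolmogorov split $k_t(x,y)-k_t(x,\overline y)=\int k_{t/2}(x,z)\big[k_{t/2}(z,y)-k_{t/2}(z,\overline y)\big]\,dw(z)$ when $|y-\overline y|\ge|x-y|/4$ --- which your sketch does not anticipate.
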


We now move on to the final main result regarding new Hardy spaces. In order to present this result, we introduce new local Hardy spaces associated to critical functions $\rho$. In what follows, for $w\in A_2$ we define $q_w=\inf\{p\in (1,\infty): w\in A_p\}$. It is well-known that $q_w<2$ (see \cite{St}), and we set $n=q_w d$. 
\begin{ddefi}\label{def: rho atoms}
	Let $\rho_w$ be the critical function defined by \eqref{critical function}. Let $p\in (\f{n}{n+1},1]$, $q\in [1,\vc]\cap (p,\vc]$ and $\epsilon\in (0,1]$. A function $a$ is called a  $(p,q,\rho_w,\epsilon)$-atom associated to the ball $B(x_0,r)$ if
	\begin{enumerate}[{\rm (i)}]
		\item ${\rm supp}\, a\subset B(x_0,r)$ and $r\le \rho_w(x_0,\mu)$;
		\item $\|a\|_{L^q_w(\Ri^d)}\leq w(B(x_0,r))^{1/q-1/p}$;
		\item $\displaystyle \int a(x)dw(x) =0$ if $r<\epsilon \rho_w(x_0,\mu)/4$.
	\end{enumerate}
\end{ddefi}
For the sake of convenience, when $\epsilon=1$ we shall write $(p,q,\rho_w)$ atom instead of $(p,q,\rho_w,\epsilon)$-atom.

\begin{ddefi}
	Let $\rho_w$ be the critical function defined by \eqref{critical function}. Let $p\in (\f{n}{n+1},1]$, $q\in [1,\vc]\cap (p,\vc]$ and $\epsilon\in (0,1]$. We  say that $f=\sum
	\lambda_ja_j$ is an atomic $(p,q,\rho_w,\epsilon)$-representation if
	$\{\lambda_j\}_{j=0}^\infty\in l^p$, each $a_j$ is a $(p,q,\rho_w,\epsilon)$-atom,
	and the sum converges in $L^2_w(\Ri^d)$. The space $h^{p,q}_{at,\rho_w,\epsilon}(\Ri^d,w)$ is then defined as the completion of
	\[
	\left\{f\in L^2_w(\Ri):f \ \text{has an atomic
		$(p,q,\rho_w,\epsilon)$-representation}\right\},
	\]
	with the norm given by
	$$
	\|f\|_{h^{p,q}_{at,\rho_w,\epsilon}(\Ri^d,w)}=\inf\Big\{\Big(\sum|\lambda_j|^p\Big)^{1/p}:
	f=\sum \lambda_ja_j \ \text{is an atomic $(p,q,\rho_w,\epsilon)$-representation}\Big\}.
	$$
\end{ddefi}
In the particular case $\epsilon=1$ we write $h^{p,q}_{at,\rho_w}(\Ri^d,w)$ instead of $h^{p,q}_{at,\rho_w,\epsilon}(\Ri^d,w)$.

\begin{ddefi}
	Let $L$ be defined in \eqref{eq-L}. For $p\in (0,1]$, the Hardy space $H^p_L(\Ri^d, w)$ is defined as a completion of the set
	\[
	\Big\{f\in L^2_w(\Ri^d): \mathcal M_L f\in L^p_w(\Ri^d)\Big\}
	\] 
	under the norm
	\[
	\|f\|_{H^p_L(\Ri^d, w)}=\Big\|\mathcal M_L f|\Big\|_{L^p_w(\Ri^d)},
	\]
	where
	\[
	\mathcal M_L f =\sup_{t>0}|e^{-tL}f|.
	\]
\end{ddefi}
The last main result of this paper is the following:
\begin{thrm}
	\label{mainthm 3}
Let $\theta = \min \{\delta,\gamma\}$, where $\delta$ is the constant in \eqref{(M1)} and $\gamma$ is the constant in Proposition \ref{ht prop} (ii).
For all $p\in (\f{n}{n+\theta},1]$ and $q\in [1,\vc]\cap (p,\vc]$  we have
	\[
	h^{p,q}_{at,\rho_w}(\Ri^d,w)\equiv H^p_L(\Ri^d,w).
	\]
\end{thrm}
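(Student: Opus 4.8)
The plan is to prove the two inclusions $h^{p,q}_{at,\rho_w}(\Ri^d,w)\subseteq H^p_L(\Ri^d,w)$ and $H^p_L(\Ri^d,w)\subseteq h^{p,q}_{at,\rho_w}(\Ri^d,w)$ separately, and to reduce most of the work to the heat kernel estimates of Theorem \ref{main 2} together with the geometry of the critical function $\rho_w$.

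\textbf{Step 1: Properties of the critical function.} First I would record the standard doubling/slow-variation properties of $\rho_w$ that follow from \eqref{(M1)}, \eqref{(M2)} and $w\in A_2\cap RD_\beta$: namely that $\rho_w$ is well-defined and finite, that there are constants $C,k_0>0$ with
\[
C^{-1}\Big(1+\frac{|x-y|}{\rho_w(x,\mu)}\Big)^{-k_0}\le \frac{\rho_w(y,\mu)}{\rho_w(x,\mu)}\le C\Big(1+\frac{|x-y|}{\rho_w(x,\mu)}\Big)^{k_0/(k_0+1)},
\]
and the associated covering lemma producing a countable family of balls $\{B(x_j,\rho_w(x_j,\mu))\}$ of bounded overlap covering $\Ri^d$. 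These are the tools needed to localise atoms and to sum the off-diagonal decay.

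\textbf{Step 2: Atoms are in $H^p_L$, uniformly.} Given a $(p,q,\rho_w)$-atom $a$ supported in $B=B(x_0,r)$ with $r\le\rho_w(x_0,\mu)$, I would estimate $\|\mathcal M_L a\|_{L^p_w}$ by splitting $\Ri^d$ into the dilated ball $4B$ and the annuli $S_j=2^{j+1}B\setminus 2^jB$. On $4B$ one uses H\"older's inequality, the $L^q_w$-boundedness of $\mathcal M_L$ (which follows from the Gaussian bound in Theorem \ref{main 2}(i)) and the normalisation (ii) of the atom. On $S_j$ with $j$ small (so that $2^jr\lesssim\rho_w(x_0,\mu)$) one uses the cancellation (iii) when it is available, or the decay factor $(1+\sqrt t/\rho_w)^{-N}$ in Theorem \ref{main 2}(i) when $r$ is comparable to $\rho_w(x_0,\mu)$ and cancellation fails — this is exactly why the definition of atom only demands the moment condition for small balls. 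For $j$ large one exploits the rapid decay in $|x-y|m_w$. The H\"older continuity estimate Theorem \ref{main 2}(ii) is what converts cancellation into the gain $(r/2^jr)^\theta = 2^{-j\theta}$, and the restriction $p>\frac{n}{n+\theta}$ is precisely what makes $\sum_j 2^{j(n/p-n-\theta)}$ summable after accounting for $w(2^jB)\lesssim 2^{jn}w(B)$ (here $n=q_wd$ governs the doubling of $w$). Summing over atoms in an atomic representation, using $\{\lambda_j\}\in\ell^p$ and $p\le 1$, then gives $\|f\|_{H^p_L}\lesssim\big(\sum|\lambda_j|^p\big)^{1/p}$, and density/completeness upgrades this to the inclusion of spaces.

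\textbf{Step 3: $H^p_L$ functions admit atomic decompositions.} This is the harder direction. I would first establish an auxiliary \emph{local} Calder\'on reproducing formula adapted to $L$: write $f = c\int_0^\infty (t^2L)^M e^{-t^2L}f\,\frac{dt}{t}$ (for suitable large $M$) and truncate the integral at the scale $\rho_w$. The non-cancellation part (scales $t\gtrsim\rho_w$) is handled using the extra decay in Theorem \ref{main 2}(i),(iii) — in particular the comparison estimate (iii) lets one compare $e^{-tL}$ with $e^{-tL_0}$ and absorb the lack of cancellation into the factor $(\sqrt t/\rho_w)^\delta$. The cancellation part is treated by the now-standard machinery (cf. the tent-space/atomic approach of \cite{HLMMY} and the local variants in \cite{WY, Dzi}): one shows that the maximal-function Hardy space $H^p_L$ coincides with a square-function Hardy space $H^p_{L,S}$ — using the Gaussian bounds and H\"older continuity to run the vector-valued Calder\'on–Zygmund / good-$\lambda$ arguments — and then decomposes elements of the square-function space into $(p,q,\rho_w)$-atoms via a stopping-time (Whitney) decomposition of the region $\{S_Lf>\lambda\}$, checking support, size (via $L^q_w$-bounds of the relevant operators and $w\in A_q$) and the moment condition for the small-scale atoms. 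One must also check that the series converges in $L^2_w$, which follows from the $L^2_w$-boundedness of the square function and a standard orthogonality (almost-orthogonality) estimate.

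\textbf{Main obstacle.} The genuinely delicate point is Step 3, and within it the passage from the maximal characterization $\mathcal M_L$ to a square-function characterization in the weighted, degenerate, sub-Gaussian-decay setting with $p<1$. All the technical inputs are available — Gaussian upper bound, H\"older continuity in the $y$-variable, comparison with the free semigroup, and the doubling/$RD_\beta$ structure of $w$ — but assembling them into the equivalence $\mathcal M_Lf\in L^p_w \iff S_Lf\in L^p_w$ for the full range $p\in(\frac{n}{n+\theta},1]$ requires careful bookkeeping of the exponent $n=q_wd$ (the only doubling exponent that is uniformly available for an $A_2$ weight) against the regularity exponent $\theta=\min\{\delta,\gamma\}$, and a localisation argument so that the missing cancellation of $e^{-tL}$ at large scales does not destroy the atomic moment condition. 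Once that equivalence and the local reproducing formula are in place, the atomic decomposition and hence both inclusions follow along established lines.
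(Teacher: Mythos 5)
Your Step 2 (atoms lie in $H^p_L$ uniformly) is essentially the paper's Step 1: split into $4B$ and its complement, use H\"older and $L^q_w$-boundedness of $\mathcal M_L$ near the support, exploit the extra decay $(1+\sqrt t/\rho_w)^{-N}$ from Theorem \ref{main 2}(i) when $r\sim\rho_w(x_0,\mu)$ (no cancellation), and use the H\"older estimate Theorem \ref{main 2}(ii) together with the moment condition when $r<\rho_w(x_0,\mu)/4$; the exponent bookkeeping $p>n/(n+\theta)$ is also exactly right. The real divergence is in the converse inclusion $H^p_L\hookrightarrow h^{p,q}_{at,\rho_w}$: you propose a Calder\'on reproducing formula $f=c\int_0^\infty (t^2L)^M e^{-t^2L}f\,\frac{dt}{t}$, a square-function Hardy space $H^p_{L,S}$, a good-$\lambda$ equivalence $\mathcal M_L f\in L^p_w\iff S_Lf\in L^p_w$, and a stopping-time decomposition of $\{S_Lf>\lambda\}$. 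The paper instead runs a \emph{localization and perturbation} argument: take the partition of unity $\{\psi_j\}$ subordinate to the critical balls $B_j=B(x_j,\rho_w(x_j,\mu))$ from Lemma \ref{xalpha psialpha}, invoke the key external input Theorem \ref{thm- H rho and H rad} from \cite{BDK} --- the maximal characterization $\|\sup_{0<t<\rho_w(\cdot,\mu)^2}|e^{-tL_0}f|\|_{L^p_w}\sim\|f\|_{h^{p,q}_{at,\rho_w}}$ for the \emph{free} operator $L_0$ --- and then compare $e^{-tL}$ to $e^{-tL_0}$ via three perturbation lemmas (off-diagonal Lemma \ref{Cor1: Hardy spaces}, commutator Lemma \ref{Lem2: Hardy spaces}, and the semigroup difference Lemma \ref{Lem3: Hardy spaces}, the last being exactly where the comparison estimate Theorem \ref{main 2}(iii) and the H\"older estimate on $q_t=h_t-k_t$ enter). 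Each perturbation error carries a factor $\epsilon^\theta$, so shrinking the scale parameter $\epsilon$ absorbs them, and the surviving term is precisely $\|\mathcal M_Lf\|_{L^p_w}$.

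The paper's route is both shorter and more robust in this degenerate weighted setting, and it avoids two genuine difficulties your plan would have to confront. First, a square-function equivalence $\mathcal M_Lf\sim S_Lf$ in $L^p_w$ for $p<1$ with $w\in A_2$ and merely Gaussian (not higher-order) kernel regularity is not available off-the-shelf here and would need to be built; the paper never develops tent spaces or square functions. Second, and more structurally, the Whitney/stopping-time decomposition of $\{S_Lf>\lambda\}$ produces atoms of \emph{arbitrary} scale with moments killed by $(t^2L)^Me^{-t^2L}$ --- not the $(p,q,\rho_w)$-atoms of Definition \ref{def: rho atoms}, whose radii must be bounded by $\rho_w(x_0,\mu)$ and whose moment condition is a single vanishing $\int a\,dw=0$ only at small scales. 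Converting from one atom type to the other is nontrivial and is not addressed in your sketch. The paper sidesteps both issues by localizing at the outset to the critical balls, where the ambient local Hardy theory of \cite{BDK, YZ2} (Theorem \ref{thm1-local hardy spaces basic property}) already supplies the right kind of atomic decomposition for $f\psi_j$, and by treating $L$ as a perturbation of $L_0$. So your outline is not wrong in spirit, but the part you flag as the "main obstacle" is indeed where your route departs from and is considerably heavier than the paper's.
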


The organization of the paper is as follows. Section 2 will establish some basic properties for the critical function for the later uses. The proofs of Theorem \ref{main}, Theorem \ref{main 2} and Theorem \ref{mainthm 3} will be given in Section 3, Section 4 and Section 5, respectively.

\bigskip

{\bf Notations.} \quad
Throughout the paper the following set of notation is used without mentioning.
Set $\Ni = \{0, 1, 2, 3, \ldots\}$ and $\Ni^* = \{1, 2, 3, \ldots\}$.
Given a $j \in \Ni$ and a ball $B = B(x,r)$, we let $2^j B = B(x,2^jr)$, $S_0(B) = B$ and $S_j(B) = 2^j B \setminus 2^{j-1} B$ if $j \ge 1$.
For all $a, b \in \Ri$, $a \wedge b = \min\{a,b\}$ and $a \vee b = \max\{a,b\}$.
For all ball $B \subset \Ri^d$ we write $w(B) := \int_B w$.
The constants $C$ and $c$ are always assumed to be positive and independent of the main parameters whose values change from line to line.
For any two functions $f$ and $g$, we write $f \lesssim g$ and $f \sim g$ to mean $f \leq C g$ and $c g \leq f \leq C g$ respectively.
Given a $p \in [1,\infty)$, the conjugate index of $p$ is denoted by $p'$.
If $f$ is defined on $\Ri^d \times \Ri^d$, the gradient with respect to the first variable of $f$ (if exists) is written as $\nabla_1 f$ and this notation is generalized to higher orders in an obvious manner.
We write $L^2(\Ri^d)$ to mean the space of square-integrable function with respect to the Lebesgue measure.
In a weighted setting of Lebesgue spaces we will use the notation $L^2_w(\Ri^d) = L^2(\Ri^d,dw)$.

\section{The critical function $\rho(\cdot,\mu)$ and some properties}

This section will prove some basic properties for the critical function $\rho_w(\cdot,\mu)$. These properties are a conner stone in the proofs of main results. 
\begin{prop} \label{crit}
Let $\rho_w(\cdot,\mu)$ be the function defined in \eqref{critical function}. Then we have the following.
\begin{tabeleq}
\item The function $\rho_w(\cdot,\mu)$ is well-defined, i.e.,  $\rho_w(x,\mu) \in (0,\infty)$ for every $x\in \Ri^d$.
\item For every $x\in \Ri^d$ one has
\[\frac{w(B(x,r))}{r^2} \le \pi(B(x,r)) \le C_1 \, \frac{w(B(x,r))}{r^2},
\] 
with $r = \rho_w(x,\mu)$.

\item If $|x-y| \ls \rho_w(x,\mu)$, then $\rho_w(x,\mu) \sim \rho_w(y,\mu)$.
\item There exist $k_0 > 0$ and $C>1$ such that
\[
C^{-1}m_w(y,\mu) \, \big( 1 + |x-y| \, m_w(y,\mu) \big)^{-k_0/(k_0+1)} 
\le m_w(x,\mu) 
\le Cm_w(y,\mu) \, \big( 1 + |x-y| \, m_w(y,\mu) \big)^{k_0}
\]
for all  $x,y\in \Ri^d$.
\end{tabeleq}
\end{prop}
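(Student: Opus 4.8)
The plan is to prove the four items in order, since each relies on the previous ones. For item (i), I would show $\rho_w(x,\mu)\in(0,\infty)$ by analysing the function $r\mapsto \phi_x(r):=\frac{r^2}{w(B(x,r))}\pi(B(x,r))$. Using the doubling property \eqref{doubling from w} of $w$ together with \eqref{(M2)}, one checks that $\phi_x$ is, up to constants, increasing in $r$ (more precisely, $\phi_x(2r)\le C\phi_x(r)+C$ is the wrong direction; instead I would use \eqref{(M1)} which gives directly that $\phi_x(r)\le C_0(r/R)^\delta\phi_x(R)$ for $R>r$, so $\phi_x(r)\to 0$ as $r\to 0^+$). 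Hence the supremum defining $\rho_w$ is over a nonempty set, giving $\rho_w(x,\mu)>0$. For finiteness, I would argue that $\phi_x(R)\to\infty$ as $R\to\infty$: if not, \eqref{(M1)} would force $\pi(B(x,R))w(B(x,R))^{-1}R^2$ to stay bounded while, using $RD_\beta$ (the reverse-doubling \eqref{(RD)}) and the fact that $\mu$ is a positive measure, one can derive a contradiction — essentially $\phi_x(R)\gtrsim R^{2-\text{something}}$ is not quite it, so the cleaner route is: if $\phi_x(R)\le C_1$ for all $R$, then $r^2\pi(B(x,r))/w(B(x,r))\le C_1$ for all $r$, and combined with \eqref{(RD)} giving $w(B(x,R))\ge Ct^\beta w(B(x,r))$ this is fine for $w$; the obstruction must come from a lower bound on $\pi$, which one does not have in general, so in fact the right statement is that $\phi_x$ is continuous and the set $\{r:\phi_x(r)\le C_1\}$ is a bounded interval $(0,\rho_w]$ precisely because $\phi_x$ is monotone-like via \eqref{(M1)} and we only need it to exceed $C_1$ eventually, which follows if $\pi\neq 0$; if $\pi\equiv 0$ near $x$ then $\rho_w(x,\mu)=\infty$ should be excluded by hypothesis or handled separately. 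I would make this precise by first establishing, from \eqref{(M1)}, the two-sided scaling $\left(\frac{r}{R}\right)^{2}\le \frac{\phi_x(r)}{\phi_x(R)}\cdot(\text{const})^{-1}$ is automatic, and $\frac{\phi_x(r)}{\phi_x(R)}\le C_0(r/R)^\delta$.

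For item (ii), with $r=\rho_w(x,\mu)$ I would use the definition of the supremum: for radii slightly larger than $r$ the quantity $\phi_x$ exceeds $C_1$, and by continuity $\phi_x(r)\ge $ something comparable; the upper bound $\pi(B(x,r))\le C_1 w(B(x,r))/r^2$ is the defining inequality at $r=\rho_w$, while the lower bound $\pi(B(x,r))\ge w(B(x,r))/r^2$ follows by taking a sequence $R_j\downarrow r$ with $\phi_x(R_j)>C_1$, applying \eqref{(M1)} to compare $\phi_x(r)$ and $\phi_x(R_j)$, and then doubling to pass from $R_j$ back to $r$ — the constant works out because the $(r/R_j)^\delta$ factor tends to $1$. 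Item (iii): if $|x-y|\lesssim\rho_w(x,\mu)=:r$, then $B(y,r)\subset B(x,Cr)$ and $B(x,r)\subset B(y,Cr)$, and using the doubling of $w$ (from $A_2$) and \eqref{(M2)} iterated a bounded number of times to absorb the factor $C$, one gets $\phi_y(cr)\lesssim \phi_x(r)+1\lesssim C_1$ and similarly a lower bound, which pins $\rho_w(y,\mu)$ between $c'r$ and $C'r$; this is the standard comparison argument. The main work here is bookkeeping the constants from \eqref{(M1)} and \eqref{(M2)}.

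Item (iv) is the hard part and I expect it to be the main obstacle. This is the Shen-type polynomial quasi-homogeneity estimate for $m_w$. The strategy: first prove the special case $|x-y|\le C\rho_w(x,\mu)$, which is immediate from item (iii). For the general case, I would iterate along a chain of points $x=x_0,x_1,\dots,x_k=y$ on the segment from $x$ to $y$ with $|x_{i}-x_{i+1}|\sim \rho_w(x_i,\mu)$, so that consecutive critical values are comparable by (iii), i.e. $m_w(x_{i+1},\mu)\sim m_w(x_i,\mu)$. The number of steps $k$ needed to travel distance $|x-y|$ is governed by how fast the $\rho_w(x_i,\mu)$ can grow or shrink; because at each step the change is by a bounded factor, after $k$ steps one has $m_w(y,\mu)\le C^k m_w(x,\mu)$ and $m_w(y,\mu)\ge C^{-k}m_w(x,\mu)$, and one estimates $k$ by a logarithmic/geometric-series argument in terms of $|x-y|\,m_w(x,\mu)$. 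Converting "$\le C^k$ with $k\sim\log$" into the clean power bound $(1+|x-y|m_w(y,\mu))^{k_0}$ requires choosing $k_0$ so that $C^{k}\le (1+|x-y|m_w(x,\mu))^{k_0}$; the two-sided (asymmetric) exponents $k_0$ and $k_0/(k_0+1)$ arise exactly because the roles of $x$ and $y$ are not symmetric in this chaining — going from $x$ to $y$ versus $y$ to $x$ gives reciprocally-related bounds, and solving the resulting pair of inequalities for a common $k_0$ produces the exponent $k_0/(k_0+1)$ on the lower side. I would follow the scheme of \cite{Shen} (Lemma 1.4 there) adapted to the weighted setting, with \eqref{(M1)} replacing the reverse-Hölder scaling and \eqref{doubling from w}, \eqref{(RD)} controlling $w$. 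The delicate point is making the chaining argument rigorous when $\rho_w$ varies — one must ensure the chain actually reaches $y$ in finitely many steps and that the step count estimate is uniform, which uses that $\rho_w$ cannot shrink too fast (a consequence of \eqref{(M1)} with exponent $\delta$) nor grow too fast (consequence of \eqref{(M2)}).
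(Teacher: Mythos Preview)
Your outline for (i) and (iii) is essentially what the paper does. There are however two genuine gaps.

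\medskip

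\textbf{Item (ii), lower bound.} Your proposed argument --- take $R_j\downarrow r$ with $\phi_x(R_j)>C_1$, use \eqref{(M1)}, and let $(r/R_j)^\delta\to 1$ --- does not yield a lower bound on $\phi_x(r)$. The inequality \eqref{(M1)} reads $\phi_x(r)\le C_0(r/R_j)^\delta\phi_x(R_j)$, which is an \emph{upper} bound on $\phi_x(r)$; and a right-limit argument only gives $\phi_x(r)\le \liminf\phi_x(R_j)$, again the wrong direction. What the paper actually uses here is the reverse doubling \eqref{(RD)} with $\beta>2$, which you never invoke: since $2r>\rho_w(x,\mu)$ one has $\pi(B(x,2r))>C_1\,w(B(x,2r))/(2r)^2\gtrsim 2^{\beta-2}C_1\,w(B(x,r))/r^2$, and combining with \eqref{(M2)} gives $C_1\big(\pi(B(x,r))+w(B(x,r))/r^2\big)\gtrsim 2^{\beta-2}C_1\,w(B(x,r))/r^2$, forcing $\pi(B(x,r))\gtrsim w(B(x,r))/r^2$ precisely because $\beta>2$. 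Without $RD_\beta$ the computation collapses.

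\medskip

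\textbf{Item (iv), chaining.} The chaining scheme you describe does not close. From (iii) you only get $\rho_w(x_{i+1},\mu)\ge c\,\rho_w(x_i,\mu)$ with some $c<1$; nothing prevents the step sizes from decaying geometrically, in which case $\sum_i\rho_w(x_i,\mu)\le \rho_w(x_0,\mu)/(1-c)$ and the chain converges without reaching $y$ once $|x-y|$ exceeds this bound. Your remedy, that ``$\rho_w$ cannot shrink too fast (a consequence of \eqref{(M1)})'', is exactly the content of (iv), so the argument is circular. The paper avoids chaining entirely and argues directly: with $R=\rho_w(y,\mu)$ and $2^{j-1}R\le|x-y|\le 2^jR$, one has $B(x,R)\subset B(y,(2^j{+}1)R)$; iterating \eqref{(M2)} $\sim j$ times from scale $R$ and using (ii) at $y$ gives $\pi(B(x,R))\lesssim (1+2^j)^b\,w(B(x,R))/R^2$ for an explicit $b$. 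Then \eqref{(M1)} applied at $x$ shows $\phi_x(tR)<C_1$ for $t\sim(1+2^j)^{-b/\delta}$, hence $\rho_w(x,\mu)\ge tR$, which is the upper bound on $m_w(x,\mu)$ with $k_0=b/\delta$. The lower bound with exponent $k_0/(k_0+1)$ then follows by swapping $x$ and $y$ in the upper bound and a short algebraic manipulation; no iteration along a path is needed.
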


\begin{proof}
Let $x, y \in \Ri^d$, $r = \rho_w(x,\mu)$ and $R = \rho_w(y,\mu)$.

(i) It follows from \eqref{(M1)} that 
\[
\lim_{t \to 0} \frac{t^2}{w(B(x,t))} \, \pi(B(x,t)) = 0
\ca
\lim_{t \to \infty} \frac{t^2}{w(B(x,t))} \, \pi(B(x,t)) = \infty.
\]
This, in combination with \eqref{(M1)}, implies $\rho_w(x,\mu) \in (0,\infty)$.

(ii) By definition we have
\[
\pi(B(x,r)) = \lim_{t \to r^-} \pi(B(x,t)) \le C_1 \, \frac{w(B(x,r))}{r^2}.
\]
Also
\[
2^{\beta-2} \, C_1 \, \frac{w(B(x,r))}{r^2}
\le C_1 \, \frac{w(B(x,2r))}{4r^2}
\le \pi(B(x,2r))
\le C_1 \, \left( \pi(B(x,r)) + \frac{w(B(x,r))}{r^2} \right),
\]
where we used \eqref{(RD)} in the first step, the definition of $\rho_w$ in the second step and \eqref{(M2)} in the last step.
From this we deduce that
\[
\pi(B(x,r)) \ge \frac{w(B(x,r))}{r^2}.
\]

(iii) Suppose that $|x-y| < Cr$ for some $C > 0$.
Then $B(y,r) \subset B(x,(C+1)r)$.
Using \eqref{(M2)} and (ii) we obtain
\[
\pi(B(x,(C+1)r)) 
\ls \pi(B(x,r)) 
\ls \frac{w(B(x,r))}{r^2}.
\]
Consequently, it follows from \eqref{(M1)}   that
\begin{eqnarray*}
\frac{(tr)^2}{w(B(y,tr))} \, \pi(B(y,tr))
&\le& C_0 \, t^{\delta} \, \frac{r^2}{w(B(y,r))} \, \pi(B(y,r))
\\
&\ls& t^{\delta} \, \frac{r^2}{w(B(x,r))} \, \pi(B(x,(C+1)r))
\\
&\ls& t^\delta < C_1,
\end{eqnarray*}
where $t$ is chosen to be sufficiently small.
Therefore $R \ge tr$ by definition, where we recall that $R=\rho_w(y,\mu)$.
Note that this in turn implies $|x-y| \ls R$.
By swapping the roles of $x$ and $y$ in the above argument, we then obtain $R \ls r$.

(iv) The case $|x-y| < R$ follows from (iii).
So we assume that $|x-y| \ge R$.
Let $j \in \Ni^*$ be such that $2^{j-1} \, R \le |x-y| \le 2^j R$.
Then $B(x,R) \subset B(y,(2^j+1)R)$.
It follows from (ii) and \eqref{(M2)} that
\begin{eqnarray*}
\pi(B(x,R)) 
&\le& (C_1 + 2^{2d-2})^{j} \, \frac{w(B(y,R))}{R^2}
\ls (C_1 + 2^{2d-2})^{j} \, (1 + 2^j)^d \frac{w(B(x,R))}{R^2}
\\
&\ls& (1 + 2^j)^b \, \frac{w(B(x,R))}{R^2},
\end{eqnarray*}
where $b := d + \log_2(C_1+2^{2d-2})$.
Using \eqref{(M1)},
\begin{eqnarray*}
\frac{(tR)^2}{w(B(x,tR))} \, \pi(B(x,tR))
&\le& C_0 \, t^{\delta} \, \frac{R^2}{w(B(x,R))} \, \pi(B(x,R))
\ls t^\delta \, (1 + 2^j)^b < C_1,
\end{eqnarray*}
where we choose $t = \left(\frac{C_1}{2} \, (1 + 2^j)^{-b}\right)^{1/\delta}$.
So the definition of $\rho_w$ gives $r \ge tR$ or equivalently
\begin{equation} \label{ineq 1}
m_w(x,\mu) 
\le \frac{ m_w(y,\mu)}{t} 
\sim m_w(y,\mu) \, \big( 1 + |x-y| \, m_w(y,\mu) \big)^{k_0},
\end{equation}
where $k_0 := b/\delta$.

For the remaining inequality, using \eqref{ineq 1} we obtain that
\[
1 + |x-y| \, m_w(x,\mu) \ls \big( 1 + |x-y| \, m_w(y,\mu) \big)^{k_0+1}.
\]
With this in mind we apply \eqref{ineq 1} again to obtain
\[
m_w(y,\mu) \gtrsim m_w(x,\mu) \, \big( 1 + |x-y| \, m_w(x,\mu) \big)^{-k_0/(k_0+1)}.
\]

The proof is complete.
\end{proof}

\begin{lemm} \label{big R}
	There exist constants $N_0 \in \Ni$ and $C > 0$ such that
	\[
	\frac{R^2}{w(B(x,R))} \, \pi(B(x,R)) \le C \, \Big(\f{R}{\rho_w(x,\mu)}\Big)^{N_0}
	\]
	for all $x \in \Ri^d$ and $R > 0$ such that $R \ge \rho_w(x,\mu)$.
\end{lemm}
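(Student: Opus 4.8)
The plan is to iterate the scaling estimate \eqref{(M1)} in reverse: \eqref{(M1)} only gives decay as the radius shrinks, so to control $\frac{R^2}{w(B(x,R))}\pi(B(x,R))$ for $R\ge\rho_w(x,\mu)$ we must combine the growth hypotheses \eqref{(M2)} and \eqref{(RD)}. First I would set $r_0=\rho_w(x,\mu)$ and pick $j\in\Ni$ with $2^{j-1}r_0\le R\le 2^j r_0$ (the case $R\le r_0$ being trivial by the definition of $\rho_w$, since then the quantity is $\le C_1\le C(R/r_0)^{N_0}$ only if we also note $R/r_0$ could be small — so actually the hypothesis $R\ge\rho_w(x,\mu)$ is exactly what makes $R/r_0\ge 1$ and the estimate meaningful). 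Then I would show by induction on $k=1,\dots,j$ that
\[
\pi(B(x,2^k r_0)) \le (C_1+2^{2d-2})^k\,\frac{w(B(x,r_0))}{r_0^2},
\]
using at each step \eqref{(M2)} to pass from radius $2^{k-1}r_0$ to $2^k r_0$ together with Proposition \ref{crit}(ii) to bound the base term $\frac{w(B(x,2^{k-1}r_0))}{(2^{k-1}r_0)^2}\le \frac{w(B(x,2^{k-1}r_0))}{r_0^2}$ and the doubling property \eqref{doubling from w} of $w\in A_2$ to control $w(B(x,2^{k-1}r_0))\lesssim 2^{(k-1)dq}w(B(x,r_0))$ — actually it is cleaner to absorb all such factors into a single geometric constant, say $\Theta:=\max\{2C_1,\,2^{2d}\}$, and prove $\pi(B(x,2^k r_0))\le \Theta^k\,w(B(x,r_0))/r_0^2$ directly. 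This mirrors exactly the iteration already carried out in the proof of Proposition \ref{crit}(iv).

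Next I would feed this into the desired quantity. Using $R\le 2^j r_0$, the lower doubling bound $w(B(x,R))\ge w(B(x,2^{j-1}r_0))\ge c\,2^{(j-1)\beta}w(B(x,r_0))$ from \eqref{(RD)} (this is where $RD_\beta$ is used — it gives a \emph{lower} bound on $w(B(x,R))$ in the denominator, which we need since a large denominator helps us), together with the monotonicity $\pi(B(x,R))\le\pi(B(x,2^j r_0))\le \Theta^j w(B(x,r_0))/r_0^2$, I obtain
\[
\frac{R^2}{w(B(x,R))}\,\pi(B(x,R))
\lesssim \frac{(2^j r_0)^2}{2^{(j-1)\beta}w(B(x,r_0))}\cdot\frac{\Theta^j w(B(x,r_0))}{r_0^2}
\lesssim 2^{2j}\,2^{-j\beta}\,\Theta^j
= \big(2^{2-\beta}\Theta\big)^j.
\]
Writing $2^{2-\beta}\Theta = 2^{N_0}$ for a suitable $N_0$ (enlarging $N_0$ so that $N_0\ge 0$; note $\Theta$ may be large enough that $N_0>0$ regardless of the sign of $2-\beta$, which is fine), and recalling $2^j\le 2R/r_0 = 2R\,m_w(x,\mu)$, this gives $\big(2^{N_0}\big)^j\lesssim (R/\rho_w(x,\mu))^{N_0}$, which is the claim after adjusting the constant $C$.

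I expect the main obstacle to be \emph{bookkeeping the constants in the iteration} rather than any conceptual difficulty: one must make sure the per-step factor from \eqref{(M2)} combined with the $w$-doubling factor \eqref{doubling from w} is genuinely a fixed constant (independent of $k$, $x$, $R$), and that the base case at $k=0$ or $k=1$ is supplied by Proposition \ref{crit}(ii) rather than by \eqref{(M1)} (which would run the wrong direction). A secondary point to handle carefully is the exponent: we need the final power of $2^j$ — namely $2-\beta+\log_2\Theta$ — to be packaged as a single $N_0\in\Ni$, which is harmless since we are proving an upper bound and may freely increase $N_0$. The borderline case $R=\rho_w(x,\mu)$ (i.e. $j=0$ or $j=1$) is covered directly by Proposition \ref{crit}(ii), so no separate argument is needed there.
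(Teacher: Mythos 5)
Your proposal is correct, but it takes a genuinely different — and somewhat more laborious — route than the paper's own proof. The paper's key observation is that for $r \ge \rho_w(x,\mu)$ the very definition of $\rho_w$ forces $\pi(B(x,r)) \ge C_1\,w(B(x,r))/r^2$, so \eqref{(M2)} at such radii instantly reads $\pi(B(x,2r)) \le (C_1+1)\,\pi(B(x,r))$: that is, $\pi$ is \emph{already} a doubling measure at scales $\ge\rho_w(x,\mu)$. Iterating this single clean doubling inequality $j_0$ times and invoking Proposition \ref{crit}(ii) once at the base scale $r_0=\rho_w(x,\mu)$ yields $\pi(B(x,R))\le C^{j_0}\,w(B(x,r_0))/r_0^2$, and then the trivial monotonicity $w(B(x,r_0))\le w(B(x,R))$ finishes. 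You instead replay the two-term iteration of \eqref{(M2)} from Proposition \ref{crit}(iv), controlling the extra $w$-term at each step via the $A_2$-doubling estimate \eqref{doubling from w}, and then separately invoke $RD_\beta$ to get a quantitative lower bound on $w(B(x,R))$. Both steps are correct (and $RD_\beta$ even gives a potentially smaller $N_0$, which is harmless since any $N_0$ works), but neither is needed once one notices the doubling of $\pi$ for large radii — the paper dispenses with both $RD_\beta$ and the $w$-doubling input in favour of that one observation. The constant bookkeeping you flag as the main obstacle evaporates in the paper's version precisely because the per-step factor is a single constant $C_1+1$, not a mixture of $C_1$ and $2^{2d}$.
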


\begin{proof}
	Let $x \in \Ri^d$ and $r_0 = \rho_w(x,\mu)$.
	By the definition of $\rho_w(x,\mu)$,
	\[
	\pi(B(x,r)) \ge C_1 \, \frac{w(B(x,r))}{r^2}
	\]
	for all $r \ge r_0$.
	
	Therefore, \eqref{(M2)} implies that $\pi$ is a doubling measure on all balls $B(x,r)$ with $r \ge r_0$, i.e.,
	\[
	\pi(B(x,2r))\lesssim \pi(B(x,r))
	\]
	for all $r\ge r_0=\rho_w(x,\mu)$.
	
	Let $j_0 \in \Ni$ be such that $2^{j_0} r_0 \le R < 2^{j_0+1} r_0$.
	Then
	\[
	\pi(B(x,R)) \le C_1^{j_0+1} \, \pi(B(x,r_0)) \le C^{j_0} \, \frac{w(B(x,r_0))}{r_0^2},
	\]
	where we used Proposition \ref{crit} (ii) in the last step.
	
	Hence,
	\[
	\frac{R^2}{w(B(x,R))} \, \pi(B(x,R))
	\le C_1^{j_0} \, \frac{R^2}{r_0^2} \, \frac{w(B(x,r_0))}{w(B(x,R))}
	\ls \left( \frac{R}{r_0} \right)^{\log_2 C_1}.
	\]
	This justifies our claim.
\end{proof}

\begin{lemm} \label{Schwartz}
	Let $x \in \Ri^d$ and $t > 0$. 
	If $\sqrt{t} \le a \, \rho_w(x,\mu)$ for some $a>0$, then  there exists a $C=C(a) > 0$ such that
	\begin{equation}\label{eq1-lem Schwartz}
	\int_{\Ri^d} \frac{1}{w(B(x\wedge y,\sqrt{t}))} \, \exp\Big(-\f{|x-y|^2}{ct}\Big) \, d\pi(y)
	\le \frac{C}{t} \cdot \left(\frac{\sqrt{t}}{\rho_w(x,\mu)}\right)^\delta,
	\end{equation}
	where $w(B(x\wedge y,\sqrt{t}))=\min\{w(B(x,\sqrt{t})),w(B(y,\sqrt{t}))\}$ and $\delta$ is the constant in \eqref{(M1)}.
	
	If $\sqrt{t} \ge a \, \rho_w(x,\mu)$ for some $a>0$, then  there exists a $C=C(a) > 0$ such that
	\begin{equation}\label{eq2-lem Schwartz}
	\int_{\Ri^d} \frac{1}{w(B(x\wedge y,\sqrt{t}))} \, \exp\Big(-\f{|x-y|^2}{ct}\Big) \, d\pi(y)
	\le \frac{C}{t} \cdot \left(\frac{\sqrt{t}}{\rho_w(x,\mu)}\right)^{N_0},
	\end{equation}
	where $N_0$ is the constant in Lemma \ref{big R}.
\end{lemm}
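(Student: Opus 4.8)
The plan is to decompose $\Ri^d$ into the ball $B:=B(x,\sqrt t)$ together with the dyadic annuli $S_j:=S_j(B)$ for $j\ge 1$, and to control the contribution of each piece by playing off the Gaussian factor against the growth estimates for $\pi$ furnished by \eqref{(M1)}, Proposition \ref{crit} (ii) and Lemma \ref{big R}. Throughout write $r_0:=\rho_w(x,\mu)$ and let $I$ denote the integral on the left-hand side of \eqref{eq1-lem Schwartz} and \eqref{eq2-lem Schwartz}. First I would record two elementary observations: for $y\in S_j$ one has $|x-y|\ge 2^{j-1}\sqrt t$, so $\exp\big(-|x-y|^2/(ct)\big)\le C_j$ with $C_0=1$ and $C_j\lesssim e^{-c'4^j}$ for $j\ge 1$, where $c'>0$ depends only on $c$; and, since $w\in A_2$, \eqref{doubling from w} gives $w(2^jB)\lesssim 2^{2jd}w(B)$, while from $B\subset B(y,(2^j+1)\sqrt t)$ for $y\in 2^jB$ we get $w(B(y,\sqrt t))\gtrsim 2^{-2jd}w(B)$; hence $\frac{1}{w(B(x\wedge y,\sqrt t))}\lesssim \frac{2^{2jd}}{w(B)}$ for $y\in S_j$. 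Summing over $j$,
\[
I\lesssim \frac{1}{w(B)}\sum_{j\ge 0}2^{2jd}\,C_j\,\pi(2^jB).
\]

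Next I would estimate $\pi(2^jB)$ scale by scale, splitting the series at the critical radius $r_0$. If $2^j\sqrt t\le r_0$, then \eqref{(M1)} applied with $r=2^j\sqrt t$ and $R=r_0$, followed by Proposition \ref{crit} (ii), gives $\frac{(2^j\sqrt t)^2}{w(2^jB)}\pi(2^jB)\le C_0C_1\big(2^j\sqrt t/r_0\big)^{\delta}$. If $2^j\sqrt t\ge r_0$, then Lemma \ref{big R} gives $\frac{(2^j\sqrt t)^2}{w(2^jB)}\pi(2^jB)\lesssim \big(2^j\sqrt t/r_0\big)^{N_0}$. In either case, using $w(2^jB)\lesssim 2^{2jd}w(B)$ once more,
\[
\pi(2^jB)\lesssim \Big[\big(2^j\sqrt t/r_0\big)^{\delta}+\big(2^j\sqrt t/r_0\big)^{N_0}\Big]\,\frac{2^{2jd}}{4^j}\,\frac{w(B)}{t}.
\]
Substituting into the bound for $I$ and using that $\sum_{j\ge 0}2^{\alpha j}C_j<\infty$ for every $\alpha\in\Ri$ (since $C_j$ decays super-exponentially), I would obtain the single uniform estimate
\[
I\lesssim \frac{1}{t}\Big[\big(\sqrt t/r_0\big)^{\delta}+\big(\sqrt t/r_0\big)^{N_0}\Big],
\]
valid for all $x\in\Ri^d$ and $t>0$.

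Finally I would read off the two claimed inequalities from this uniform bound. We may assume $N_0\ge\delta$, since enlarging $N_0$ does not affect the validity of Lemma \ref{big R}. If $\sqrt t\le a\,r_0$, then $\sqrt t/r_0\le a$, so $\big(\sqrt t/r_0\big)^{N_0}\le a^{N_0-\delta}\big(\sqrt t/r_0\big)^{\delta}$ and \eqref{eq1-lem Schwartz} follows. If $\sqrt t\ge a\,r_0$, then $\sqrt t/r_0\ge a$, so $\big(\sqrt t/r_0\big)^{\delta}\le a^{\delta-N_0}\big(\sqrt t/r_0\big)^{N_0}$ and \eqref{eq2-lem Schwartz} follows; in both estimates the implied constant depends on $a$, as required.

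I expect the only genuinely delicate point to be the per-scale estimate of $\pi(2^jB)$: the radius-decay of $\pi$ is governed by the scale-invariant inequality \eqref{(M1)} only on scales below the critical radius $r_0$, whereas above $r_0$ the measure $\pi$ is merely doubling (this is the content of Lemma \ref{big R}), so one must cut the dyadic series at $r_0$ and carry both exponents $\delta$ and $N_0$ through the computation; once that bookkeeping is set up, the remainder is the routine Gaussian estimate that makes the dyadic series converge.
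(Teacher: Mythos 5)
Your proposal is correct and follows essentially the same route as the paper: decompose into dyadic annuli about $B(x,\sqrt t)$, use the doubling of $w$ and the Gaussian factor to control each annulus, apply \eqref{(M1)} together with Proposition \ref{crit}(ii) on scales below $\rho_w(x,\mu)$ and Lemma \ref{big R} above it, and sum. The only (cosmetic) difference is that you derive a single uniform bound $\frac{1}{t}\big[(\sqrt t/\rho_w(x,\mu))^\delta + (\sqrt t/\rho_w(x,\mu))^{N_0}\big]$ valid for all $t>0$ and then read off both cases at the end, whereas the paper treats the case $\sqrt t \lesssim \rho_w(x,\mu)$ directly and declares the other case similar; your streamlining is sound, since enlarging $N_0$ costs nothing in Lemma \ref{big R}.
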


\begin{proof}
	We first prove \eqref{eq1-lem Schwartz}. Setting $r=\rho_w(x,\mu)$, $B=B(x,\sqrt t)$ and taking $j_0\in \mathbb{N}$ such that $2^{j_0}\sqrt t\le r< 2^{j_0+1}\sqrt t$, then we have
	\begin{equation}\label{eq-sum Ej big small}
	\begin{aligned}
	\int_{\Ri^d} &\frac{1}{w(B(x\wedge y,\sqrt{t}))} \, \exp\Big(-\f{|x-y|^2}{ct}\Big) \, d\pi(y)\\
	&= \sum_{j=0}^\infty \int_{S_j(B)}\frac{1}{w(B(x\wedge y,\sqrt{t}))} \, \exp\Big(-\f{|x-y|^2}{ct}\Big) \, d\pi(y)\\
	&=:\sum_{j=0}^\infty E_j=\sum_{j=0}^{j_0} E_j +\sum_{j=j_0+1}^\infty E_j.
	\end{aligned}
	\end{equation}
	For $j=0,1,\ldots, j_0$ we have
	\begin{equation}
	\label{eq1-proof 7.3}
	\begin{aligned}
	E_j&\lesi e^{-c2^{2j}} \frac{\pi(2^jB)}{w(B(x\wedge y,\sqrt{t}))}=e^{-c2^{2j}} \frac{\pi(2^jB)}{w(2^jB)}\f{w(2^jB)}{w(B(x\wedge y,\sqrt{t}))}\\
	&\lesi 2^{2jd}e^{-c2^{2j}}\frac{\pi(2^jB)}{w(2^jB)}\\
	&\lesi e^{-c2^{2j}}\frac{\pi(2^jB)}{w(2^jB)},
	\end{aligned}
	\end{equation}
	where in the second inequality we used \eqref{doubling from w}.
	
	Note that for $j=0,1,\ldots, j_0$ one has $2^{j}\sqrt t\le r=\rho_w(x,\mu)$. Hence, owing to \eqref{(M1)} and Proposition \ref{crit}, we have
	\[
	\frac{\pi(2^jB)}{w(2^jB)}\lesi \Big(\f{2^j\sqrt t}{r}\Big)^\delta \f{1}{2^{2j}t}\f{r^2 v(B(x,r))}{w(B(x,r))}\sim \Big(\f{2^j\sqrt t}{r}\Big)^\delta \f{1}{2^{2j}t}.
	\]
	Plugging this into the estimate of $E_j$, we can simplify that
	\[
	E_j\lesi e^{-c2^{2j}}\f{1}{t}\Big(\f{\sqrt t}{r}\Big)^\delta=e^{-c2^{2j}}\f{1}{t}\Big(\f{\sqrt t}{\rho_w(x,\mu)}\Big)^\delta
	\]
	for all $j\le j_0$, which implies
	\begin{equation}
	\label{eq- Ej small}
	\sum_{j=0}^{j_0} E_j\lesi \f{1}{t}\Big(\f{\sqrt t}{\rho_w(x,\mu)}\Big)^\delta.
	\end{equation}
	
	For all $j> j_0$, similarly to \eqref{eq1-proof 7.3} we have
	\[
	\begin{aligned}
	E_j&\lesi e^{-c2^{2j}}\frac{\pi(2^jB)}{w(2^jB)}
	\end{aligned}
	\ca
	2^j\sqrt t\ge r=\rho_w(x,\rho).
	\]
	Applying Lemma \ref{big R} and the fact that $\sqrt t\le Cr$,
	\[
	\begin{aligned}
	E_j&\lesi e^{-c2^{2j}}\f{1}{2^{2j}t}\Big(\f{2^j\sqrt t}{r}\Big)^{N_0}\lesi  e^{-c2^{2j}} 2^{jN_0}\f{1}{t}\lesi e^{-c2^{2j}}\f{1}{t},
	\end{aligned}
	\] 
	which implies
	\[
	\begin{aligned}
	\sum_{j> j_0}E_j&\lesi e^{-c2^{2j_0}}\f{1}{t}\lesi 2^{-j_0 \delta}\f{1}{t}.
	\end{aligned}
	\]
	This, along with the fact that $2^{-j_0}\sim \sqrt t/r$, yields that 
	\[
	\sum_{j> j_0}E_j\lesi \frac{C'}{t} \cdot \left(\frac{\sqrt{t}}{\rho_w(x,\mu)}\right)^\delta.
	\]
	Collecting this estimate and the estimates \eqref{eq- Ej small} and \eqref{eq-sum Ej big small} we deduce to the desired estimate \eqref{eq1-lem Schwartz}.
	
	\bigskip
	
	The proof of \eqref{eq2-lem Schwartz} can be done similarly. Hence, we omit the details.
	
	This completes our proof.
\end{proof}

We end this section with the following useful lemma regarding a covering result of a family of balls whose radii are equal to the values of the critical function at their centers. 
\begin{lemm} \label{xalpha psialpha}
	There exist a sequence $(x_j)_{j \in \Ni} \subset \Ri^d$ and a family of functions $(\psi_j)_{j \in \Ni}$ such that the following hold.
	\begin{tabeleq}
		\item $\bigcup_{j \in \Ni} B_j = \Ri^d$, where $\rho_j = \rho_w(x_j,\mu)$ and $B_j=B(x_j, \rho_j)$ for all $j \in \Ni$.
		\item For all $\tau \geq 1$ there exist constants $C, \zeta_0 > 0$ such that 
		\[
		\sum_{j \in \Ni} \chi_{B(x_j, \tau \rho_j)} \leq C \, \tau^{\zeta_0}.
		\]
		\item $\supp \psi_j \subset B(x_j, \rho_j)$ and $0 \leq \psi_j \leq 1$.
		\item  $|\nabla\psi_j(x)| \ls 1/\rho_j$ for all $x, y \in \Ri^d$.
		\item  $\sum_{j \in \Ni} \psi_j = 1$.
	\end{tabeleq}
\end{lemm}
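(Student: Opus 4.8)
The plan is to construct the points $(x_j)$ by a standard Vitali-type / maximal packing argument adapted to the metric-measure setting, using that the critical function $\rho_w(\cdot,\mu)$ varies slowly in the sense of Proposition \ref{crit}~(iii), and then to obtain the partition of unity $(\psi_j)$ by smoothing and normalizing the characteristic functions of the covering balls. First I would fix a maximal collection $(x_j)_{j\in\Ni}$ of points in $\Ri^d$ such that the balls $B(x_j,\rho_j/4)$ are pairwise disjoint, where $\rho_j=\rho_w(x_j,\mu)$ (such a collection exists and is at most countable because each ball has positive $w$-measure and $\Ri^d$ is $\sigma$-finite). Maximality of the packing forces $\bigcup_j B(x_j,\rho_j/2)=\Ri^d$: given any $x\in\Ri^d$, if $B(x,\rho_w(x,\mu)/4)$ met none of the $B(x_j,\rho_j/4)$ we could add $x$ to the collection; and when $B(x,\rho_w(x,\mu)/4)\cap B(x_j,\rho_j/4)\ne\emptyset$ for some $j$, Proposition \ref{crit}~(iii) gives $\rho_w(x,\mu)\sim\rho_j$, so $|x-x_j|\lesssim\rho_j$ and after enlarging the constant we get $x\in B(x_j,\rho_j)$. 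Rescaling the covering constant (or starting the packing with radius $\rho_j/4$ and covering with $\rho_j$) gives (i).

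For the bounded-overlap property (ii), suppose $x\in B(x_j,\tau\rho_j)$ for several indices $j$ in some set $J$. For each such $j$, Proposition \ref{crit}~(iv) applied with $|x-x_j|\le\tau\rho_j$ yields $\rho_w(x,\mu)\sim_\tau\rho_j$ with comparability constants that are polynomial in $\tau$; concretely $m_w(x_j,\mu)\le C m_w(x,\mu)(1+\tau)^{k_0}$ and the reverse bound with exponent $k_0/(k_0+1)$, so all the $\rho_j$, $j\in J$, are comparable to $\rho_w(x,\mu)$ up to a factor $\tau^{c}$. Consequently the disjoint balls $B(x_j,\rho_j/4)$, $j\in J$, are all contained in a fixed ball $B(x,C\tau^{c}\rho_w(x,\mu))$ and each has $w$-measure $\gtrsim\tau^{-c'd}\,w(B(x,\rho_w(x,\mu)))$ by the doubling property \eqref{doubling from w}. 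Summing $w$-measures and using doubling once more to bound $w(B(x,C\tau^c\rho_w(x,\mu)))\lesssim\tau^{c''d}w(B(x,\rho_w(x,\mu)))$ gives $\#J\lesssim\tau^{\zeta_0}$ for a suitable $\zeta_0$, which is exactly (ii).

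Finally, for (iii)--(v), I would choose a fixed $\phi\in C_c^\infty(\Ri^d)$ with $\chi_{B(0,1/2)}\le\phi\le\chi_{B(0,1)}$ and set $\widetilde\psi_j(x)=\phi\big((x-x_j)/\rho_j\big)$, so $\supp\widetilde\psi_j\subset B(x_j,\rho_j)$, $0\le\widetilde\psi_j\le1$, and $|\nabla\widetilde\psi_j|\lesssim1/\rho_j$; moreover $\widetilde\psi_j\ge1$ on $B(x_j,\rho_j/2)$, so by (i) the sum $\Phi:=\sum_j\widetilde\psi_j$ satisfies $\Phi\ge1$ everywhere, while by (ii) (with $\tau=1$) the sum is locally finite and $1\le\Phi\lesssim1$. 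Put $\psi_j:=\widetilde\psi_j/\Phi$. Then (v) and the support and size conditions in (iii) are immediate; for the gradient bound (iv), on $\supp\widetilde\psi_j$ one has $\rho_j\sim\rho_w(x,\mu)$, and for any $x$ only $O(1)$ terms of $\Phi$ are nonzero near $x$ with all their radii comparable to $\rho_w(x,\mu)$, so $|\nabla\Phi(x)|\lesssim1/\rho_w(x,\mu)\sim1/\rho_j$ and the quotient rule gives $|\nabla\psi_j(x)|\lesssim1/\rho_j$. The main obstacle is bookkeeping the dependence of all comparability constants on $\tau$ in step (ii) — making sure the exponent $\zeta_0$ comes out as an honest power of $\tau$ — which is handled cleanly by invoking the quantitative estimate in Proposition \ref{crit}~(iv) rather than just the qualitative slow-variation in (iii).
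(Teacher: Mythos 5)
Your construction --- a maximal Vitali-type packing for the $(x_j)$, Proposition \ref{crit}~(iv) to quantify the bounded overlap, and normalized bump functions for the partition of unity --- is precisely the argument the paper invokes by citing Shen's Lemma 3.3 verbatim, so the two proofs coincide in approach. One small imprecision worth fixing: starting the packing with radius $\rho_j/4$ does not by itself guarantee that the $B(x_j,\rho_j)$ cover, since the comparability constant in Proposition \ref{crit}~(iii) may exceed $3$; one should take a maximal disjoint family of balls $B(x_j,c\rho_j)$ with $c>0$ chosen small enough relative to that constant (you hint at this with ``rescaling the covering constant,'' and the rest of the argument is unaffected).
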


\begin{proof}
	We note that $\rho_w(\cdot,\mu)$ acquires all the properties analogous to those of $\rho(\cdot,\mu)$ given in \cite{Shen}.
	Hence the proof for this lemma is done verbatim as in \cite[Proof of Lemma 3.3]{Shen}.
\end{proof}
\section{Upper bounds for the fundamental solution $\Gamma_\mu(x,y)$}

This section is devoted to the proof of Theorem \ref{main}. To do this, we first establish some solution/subsolution estimates for the equation $(L+i\tau)u=f$. Before coming to the details we need to set up the formal definition of the operator $L$.

In what follows we denote
\[
W^{1,2}_{w,\loc}(\Ri^d)
:= \big\{ u \in L^2_{w,\loc}(\Ri^d): \D_j u \in L^2_{w,\loc}(\Ri^d) \mbox{ for all } j \in \{1,\ldots,d\} \big\}.
\]

\subsection{The formal definition of $L$}\label{operator def}
We first recall the Poincar\'e's inequality in  \cite[Lemma 5]{KS}.

\begin{prop}\label{Poincare}
	Let $x_0 \in \Ri^d$, $R > 0$ and $B = B(x_0,R)$.
	Then 
	\[
	\int_B \int_B |\phi(x) - \phi(y)|^2 \, dw(x) \, dw(y)
	\le C \, R^2 \, w(B) \, \int_B |\nabla \phi(x)|^2 \, dw(x)
	\]
	for all $\phi \in C^1(\overline{B})$.
\end{prop}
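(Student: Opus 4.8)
The final statement to prove is the Poincaré inequality of Proposition \ref{Poincare}, which the paper attributes to \cite[Lemma 5]{KS}. Since the weight $w$ lies in $A_2$, this is a standard weighted Poincaré inequality, and I will sketch the proof one would give.

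\textbf{The plan.} The strategy is to pass through the unweighted Poincaré–Sobolev inequality combined with the $A_2$-property of $w$, or more directly to exploit the representation formula for $\phi(x)-\phi(x_B)$ in terms of a fractional integral of $\nabla\phi$. First I would reduce the double integral over $B\times B$ to a single integral: by the triangle inequality and the trivial estimate $|\phi(x)-\phi(y)|^2 \le 2|\phi(x)-\phi_{B,w}|^2 + 2|\phi(y)-\phi_{B,w}|^2$, where $\phi_{B,w} = w(B)^{-1}\int_B \phi\,dw$ is the $w$-average, we get
\[
\int_B\int_B |\phi(x)-\phi(y)|^2\,dw(x)\,dw(y) \le 4\,w(B)\,\int_B |\phi(x)-\phi_{B,w}|^2\,dw(x).
\]
So it suffices to prove the one-sided inequality $\int_B |\phi-\phi_{B,w}|^2\,dw \le C R^2 \int_B |\nabla\phi|^2\,dw$.

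\textbf{Key steps.} For the one-sided inequality I would use the classical pointwise estimate (valid for $\phi\in C^1(\overline B)$ on a ball): for $x\in B$,
\[
|\phi(x)-\phi_B| \lesssim \int_B \frac{|\nabla\phi(y)|}{|x-y|^{d-1}}\,dy,
\]
where $\phi_B$ is the \emph{Lebesgue} average. One then replaces $\phi_B$ by $\phi_{B,w}$ at the cost of a harmless additive term controlled the same way, and estimates the $L^2_w$-norm of the fractional integral operator $I_1(|\nabla\phi|\,\chi_B)$ against the $L^2_w$-norm of $|\nabla\phi|\,\chi_B$. The boundedness of the (truncated/localized) Riesz potential $I_1$ from $L^2_w$ to $L^2_w$ with the correct $R^2$-scaling is exactly where the $A_2$ hypothesis enters — this follows from the weighted boundedness of fractional integrals for weights in the appropriate $A_{p,q}$ class, which $A_2$ guarantees after the localization to $B$ (one uses that on a fixed ball the kernel $|x-y|^{-(d-1)}\chi_B$ is dominated by $R\cdot |x-y|^{-d}\chi_B$-type bounds, reducing matters to the weighted boundedness of a local fractional maximal/integral operator). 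Alternatively, and perhaps more cleanly given that the paper only needs an $L^2_w$ statement, one can invoke the self-improving $(1,1)$-Poincaré inequality for $A_2$ weights (they support a $(2,2)$-Poincaré inequality with respect to $dw$, which is a theorem of Fabes–Kenig–Serapioni; see \cite{FKS}) directly: $A_2$ weights are doubling and admit the weighted Poincaré inequality
\[
\left(\frac{1}{w(B)}\int_B |\phi-\phi_{B,w}|^2\,dw\right)^{1/2} \lesssim R\left(\frac{1}{w(B)}\int_B |\nabla\phi|^2\,dw\right)^{1/2},
\]
which is precisely the desired bound after multiplying through by $w(B)$.

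\textbf{Main obstacle.} The only real work is establishing the weighted $L^2$-Poincaré inequality on a single ball for an $A_2$ weight with the sharp linear dependence on $R$; everything else is bookkeeping. The cleanest route is to cite the Fabes–Kenig–Serapioni machinery (already referenced as \cite{FKS} in the paper), since they prove exactly that degenerate elliptic operators with $A_2$ weights satisfy Sobolev and Poincaré inequalities with respect to $dw$. So in practice the "proof" is: reduce to the one-sided inequality as above, then quote the FKS weighted Poincaré inequality (or \cite[Lemma 5]{KS}) and rescale. I would present it in that order, flagging that the scaling in $R$ comes from dilating the unit-ball version of the inequality, using that the constant in the FKS/KS inequality depends only on $d$, $\Lambda$, and the $A_2$-constant of $w$, not on $x_0$ or $R$.
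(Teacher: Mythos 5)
The paper gives no proof of Proposition~\ref{Poincare} at all: it is stated as a recollection of \cite[Lemma 5]{KS}, with nothing further said. So there is no "paper's proof" to match against, and your task is really to reconstruct a standard argument, which you do correctly. Your reduction of the double integral to the one-sided inequality via $|\phi(x)-\phi(y)|^2 \le 2|\phi(x)-\phi_{B,w}|^2 + 2|\phi(y)-\phi_{B,w}|^2$ is exactly the right bookkeeping, and after that, citing the FKS weighted $(2,2)$-Poincar\'e inequality for $A_2$ weights (or \cite[Lemma 5]{KS} itself) closes the argument. Your alternative route via the subrepresentation formula is also sound, though one minor caution: the $L^2_w\to L^2_w$ bound for the localized Riesz potential does not come from the Muckenhoupt--Wheeden $A_{p,q}$ theory (that governs $L^p_w\to L^q_{w}$ with a gain in exponent); the clean way, which you gesture at, is the pointwise domination $\int_{|x-y|<R}|\nabla\phi(y)|\,|x-y|^{1-d}\,dy \lesssim R\,\mathcal M(|\nabla\phi|\chi_B)(x)$ followed by $L^2_w$-boundedness of the Hardy--Littlewood maximal operator, which is exactly what $w\in A_2$ delivers. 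Either way the argument is correct and the scaling in $R$ comes out right; since the paper simply cites \cite{KS}, your sketch is more detailed than what the authors record.
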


We now state the following result which plays a key role in the construction of the formal definition of the operator $L$.
\begin{prop} \label{norm equiv}
Let $u \in W^{1,2}_{w,\loc}(\Ri^d)$ such that $\nabla u \in L^2_w(\Ri^d)$.
Then the following hold.
\begin{tabel}
\item If $u \in L^2_\pi(\Ri^d)$ then $m_w(\cdot,\mu) \, u \in L^2_w(\Ri^d)$ and 
\[
\int_{\Ri^d} |u|^2 \, m_w(\cdot,\mu)^2 \, dw
\ls \int_{\Ri^d} |\nabla u|^2 \, dw + \int_{\Ri^d} |u|^2 \, d\pi.
\]
\item If $m_w(\cdot,\mu) \, u \in L^2_w(\Ri^d)$ then $u \in L^2_\pi(\Ri^d)$ and
\[
\int_{\Ri^d} |u|^2 \, d\pi
\ls \int_{\Ri^d} |\nabla u|^2 \, dw + \int_{\Ri^d} |u|^2 \, m_w(\cdot,\mu)^2 \, dw.
\]
\end{tabel}
\end{prop}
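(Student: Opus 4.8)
The plan is to prove the two inequalities of Proposition \ref{norm equiv} by a localization argument using the covering $(B_j)_{j\in\Ni}$ from Lemma \ref{xalpha psialpha}, together with the Poincar\'e inequality of Proposition \ref{Poincare} and the elementary comparison between $\pi$-mass and $w$-mass of a ball of radius $\rho_w(x_j,\mu)$ supplied by Proposition \ref{crit}(ii). The key point is that on the ball $B_j=B(x_j,\rho_j)$ the quantity $m_w(\cdot,\mu)$ is comparable to the constant $1/\rho_j$ (by Proposition \ref{crit}(iii), since any $x\in B_j$ has $|x-x_j|<\rho_j\sim\rho_w(x_j,\mu)$), so both statements reduce to balancing, on each $B_j$, the three quantities $\rho_j^{-2}\int_{B_j}|u|^2\,dw$, $\int_{B_j}|\nabla u|^2\,dw$ and $\int_{B_j}|u|^2\,d\pi$, and then summing over $j$ using the bounded-overlap property Lemma \ref{xalpha psialpha}(ii).

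For part (a): fix $j$ and write $u_{B_j}$ for a suitable average of $u$ over $B_j$ (say the $w$-average). The Poincar\'e inequality gives
\[
\int_{B_j}|u-u_{B_j}|^2\,dw \ls \rho_j^2\int_{B_j}|\nabla u|^2\,dw,
\]
while for the constant part one estimates $|u_{B_j}|^2 w(B_j)\ls \int_{B_j}|u|^2\,dw$ and then uses that, on $B_j$, $m_w(\cdot,\mu)^2\sim\rho_j^{-2}$, and that by Proposition \ref{crit}(ii) $\rho_j^{-2}w(B_j)\le\pi(B_j)$, to bound $\rho_j^{-2}|u_{B_j}|^2 w(B_j)\ls \pi(B_j)\,\frac1{w(B_j)}\int_{B_j}|u|^2\,dw$; one has to pass from this to $\int_{B_j}|u|^2\,d\pi$, which is where a further application of Poincar\'e (comparing $\int_{B_j}|u|^2\,d\pi$ with $|u_{B_j}|^2\pi(B_j)$ plus an oscillation term controlled by $\pi(B_j)\rho_j^2 \cdot \frac{1}{w(B_j)}\int_{B_j}|\nabla u|^2\,dw \ls \int_{B_j}|\nabla u|^2\,dw$, again using $\pi(B_j)\le C_1 w(B_j)/\rho_j^2$) is needed. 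Summing,
\[
\int_{\Ri^d}|u|^2 m_w(\cdot,\mu)^2\,dw \ls \sum_j\Big(\int_{B_j}|\nabla u|^2\,dw+\int_{B_j}|u|^2\,d\pi\Big)\ls \int_{\Ri^d}|\nabla u|^2\,dw+\int_{\Ri^d}|u|^2\,d\pi,
\]
the last step using Lemma \ref{xalpha psialpha}(ii) with $\tau=1$. Part (b) is symmetric: on $B_j$ one writes $\int_{B_j}|u|^2\,d\pi \ls \pi(B_j)|u_{B_j}|^2 + \pi(B_j)\cdot\frac{1}{w(B_j)}\int_{B_j}|u-u_{B_j}|^2\,dw$, bounds $\pi(B_j)\le C_1 w(B_j)/\rho_j^2 \sim \int_{B_j}m_w(\cdot,\mu)^2\,dw/|\text{(average)}|$ to convert $\pi(B_j)|u_{B_j}|^2$ into $\int_{B_j}|u|^2 m_w(\cdot,\mu)^2\,dw$, and applies Poincar\'e plus $\pi(B_j)\rho_j^2\ls w(B_j)$ to the oscillation term to get $\int_{B_j}|\nabla u|^2\,dw$; then sum.

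The main obstacle I anticipate is the bookkeeping in passing between the ``average'' formulation (quantities like $|u_{B_j}|^2\pi(B_j)$) and the genuine integral $\int_{B_j}|u|^2\,d\pi$ against the possibly very irregular measure $\pi$: one cannot split $|u|^2 = |u-u_{B_j}|^2 + |u_{B_j}|^2$ directly, so one needs the inequality $\int_{B_j}|u|^2\,d\pi \le 2|u_{B_j}|^2\pi(B_j) + 2\int_{B_j}|u-u_{B_j}|^2\,d\pi$ and then must control $\int_{B_j}|u-u_{B_j}|^2\,d\pi$ — but since $u-u_{B_j}$ is not constant this is not immediately Poincar\'e. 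The way around it is to note $\pi(B_j)\le C_1 w(B_j)\rho_j^{-2}$ only controls the total mass, so instead one should iterate over the dyadic sub-balls or, more cleanly, observe that the claimed inequalities are really about the quadratic form domain: one proves them first for $u$ supported in a single $B_j$ (where everything is a direct Poincar\'e computation since $m_w\sim\rho_j^{-1}$ is constant there and $\pi\le C_1 w\rho_j^{-2}$ as measures on $B_j$ after using the doubling/comparison, giving $\int_{B_j}|u|^2\,d\pi\le C_1\rho_j^{-2}\int_{B_j}|u|^2\,dw$ directly), and then globalizes via the partition of unity $\sum_j\psi_j=1$, handling the cross terms $\int|\nabla(\psi_j u)|^2\,dw \ls \int_{B_j}|\nabla u|^2\,dw + \rho_j^{-2}\int_{B_j}|u|^2\,dw$ with $\rho_j^{-2}\sim m_w(\cdot,\mu)^2$ on $B_j$. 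This partition-of-unity route avoids the irregular-measure oscillation issue entirely and is the approach I would actually carry out.
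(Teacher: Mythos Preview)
Your global architecture matches the paper's exactly: localize to the critical balls $B_j=B(x_j,\rho_j)$ of Lemma~\ref{xalpha psialpha}, use Proposition~\ref{crit}(iii) to replace $m_w(\cdot,\mu)$ by the constant $1/\rho_j$ on $B_j$, prove a local inequality on each $B_j$, and then sum using bounded overlap. You also correctly isolate the genuine difficulty, namely controlling the oscillation term $\int_{B_j}|u-u_{B_j}|^2\,d\pi$ against the possibly irregular measure $\pi$.

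The gap is in your proposed resolution of that difficulty. You assert that on $B_j$ one has ``$\pi\le C_1\, w\,\rho_j^{-2}$ as measures'', giving $\int_{B_j}|u|^2\,d\pi\le C_1\rho_j^{-2}\int_{B_j}|u|^2\,dw$ ``directly''. This is false. Proposition~\ref{crit}(ii) only yields the \emph{scalar} comparison $\pi(B_j)\le C_1\,w(B_j)/\rho_j^2$ of total masses; it says nothing about $d\pi$ versus $dw$ pointwise. The hypotheses \eqref{(M1)}--\eqref{(M2)} allow $\mu$ (hence $\pi$) to be singular with respect to $w$ --- the introduction emphasizes precisely that this class goes beyond reverse-H\"older potentials --- so no density-wise bound is available. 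Consequently your partition-of-unity route does not sidestep the obstacle either: for part~(b) you still must bound $\int_{B_j}|\psi_j u|^2\,d\pi$, which is the same issue, and for part~(a) the gradient cross-term $\rho_j^{-2}\int_{B_j}|u|^2\,dw\sim\int_{B_j}|u|^2\,m_w(\cdot,\mu)^2\,dw$ is exactly the quantity on the left-hand side, making the argument circular. The paper's local step goes in the opposite direction: it exploits the \emph{lower} bound $\pi(B)\ge w(B)/r_0^2$ from Proposition~\ref{crit}(ii) to introduce the $\pi$-integral on the right via averaging (writing $|u(x)|^2\le 2|u(x)-u(y)|^2+2|u(y)|^2$ and integrating in $y$), combined with the double-integral Poincar\'e of Proposition~\ref{Poincare}, rather than ever attempting a pointwise comparison of $d\pi$ with $dw$.
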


\begin{proof}
We prove (a) only.
The proof for (b) is done analogously.

Let $x_0 \in \Ri^d$ and $r_0 = \rho_w(x_0,\mu)$.
Set $B = B(x_0,r_0)$.
By Proposition \ref{crit} (ii) we have 
\begin{eqnarray*}
I 
&:=& \int_B \left( \frac{w(B)}{r_0^2} \wedge \pi(B) \right) \, |u|^2 \, dw
\ge \frac{w(B)}{r_0^2} \, \int_B |u|^2 \, dw.
\end{eqnarray*}

Also it follows from Proposition \ref{Poincare} that
\begin{eqnarray*}
I 
&\ls& \int_B \int_B \frac{1}{r_0^2} \, |u(x)-u(y)|^2 \, dw(x) \, dw(y)
+ w(B) \, \int_B |u(y)|^2 \, d\pi(y)
\\
&\ls& w(B) \, \left( \int_B |\nabla u(x)|^2 \, dw(x) + \int_B |u(x)|^2 \, d\pi(x) \right).
\end{eqnarray*}
Hence
\[
\frac{1}{r_0^2} \, \int_B |u|^2 \, dw
\ls \int_B |\nabla u|^2 \, dw + \int_B |u|^2 \, d\pi,
\]
or equivalently
\[
\int_B |u|^2 \, m_w(\cdot,\mu)^2 \, dw
\ls \int_B |\nabla u|^2 \, dw + \int_B |u|^2 \, d\pi,
\]
as $m_w(x,\mu) \sim 1/r_0$ for all $x \in B$ by Proposition \ref{crit}(iii).

Hence, let $\{B_j\}_{j\in \mathbb N}$ be the family of balls in Lemma \ref{xalpha psialpha}. Then we have
\[
\int_{B_j} |u|^2 \, m_w(\cdot,\mu)^2 \, dw
\ls \int_{B_j} |\nabla u|^2 \, dw + \int_{B_j} |u|^2 \, d\pi
\]
for each $j\in \mathbb{N}$.

Summing over all $j\in \mathbb{N}$ and using (i) and (ii) of Lemma \ref{xalpha psialpha}, we arrive at the conclusion.
\end{proof}

The following result is a direct consequence of Proposition \ref{norm equiv}.
\begin{coro} \label{H equiv}
Let 
\begin{equation} \label{H}
H := \big\{ u \in W^{1,2}_{w,\loc}(\Ri^d): \nabla u \in L^2_w(\Ri^d) \mbox{ and } m_w(\cdot,\mu) \, u \in L^2_w(\Ri^d) \big\}
\end{equation}
be equipped with the norm
\[
\|u\|_H = \int_{\Ri^d} |\nabla u|^2 \, dw + \int_{\Ri^d} m_w(\cdot,\mu)^2 \, |u|^2 \, dw,
\]
and
\[
H' := \big\{ u \in W^{1,2}_{w,\loc}(\Ri^d): \nabla u \in L^2_w(\Ri^d) \mbox{ and } u \in L^2_w(\Ri^d,d\mu) \big\}
\]
be equipped with the norm
\[
\|u\|_{H'} = \int_{\Ri^d} |\nabla u|^2 \, dw + \int_{\Ri^d} |u|^2 \, w \, d\mu.
\]
Then $H=H'$ with equivalent norms. 

Moreover, $H$ is a Hilbert space (with respect to the induced inner product).
\end{coro}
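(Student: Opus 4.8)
The plan is to read off the equality $H = H'$ with equivalent norms directly from the two inequalities in Proposition \ref{norm equiv}, and then to verify completeness by hand. First I would observe that the only difference between the two spaces is whether one imposes $m_w(\cdot,\mu)\,u \in L^2_w(\Ri^d)$ or $u \in L^2_w(\Ri^d,d\mu)$, i.e. $u \in L^2_\pi(\Ri^d)$, since $d\pi = w\,d\mu$ and $\|u\|_{L^2(d\pi)}^2 = \int |u|^2\,w\,d\mu$. If $u \in H$, then $\nabla u \in L^2_w$ and $m_w(\cdot,\mu)\,u \in L^2_w$, so Proposition \ref{norm equiv}(b) applies and gives $u \in L^2_\pi(\Ri^d)$ together with $\int |u|^2\,d\pi \ls \|u\|_H$; hence $u \in H'$ and $\|u\|_{H'} \ls \|u\|_H$. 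Conversely, if $u \in H'$, then $\nabla u \in L^2_w$ and $u \in L^2_\pi$, so Proposition \ref{norm equiv}(a) gives $m_w(\cdot,\mu)\,u \in L^2_w$ with $\int |u|^2 m_w(\cdot,\mu)^2\,dw \ls \|u\|_{H'}$; hence $u \in H$ and $\|u\|_H \ls \|u\|_{H'}$. This proves $H = H'$ as sets and that the two norms are equivalent. (A small caveat: the displayed ``norms'' are written as squares of norms, i.e. they are really $\|\cdot\|_H^2$; I would either silently read them that way or add a remark. The bilinear form $\langle u,v\rangle_H = \int \nabla u \cdot \overline{\nabla v}\,dw + \int m_w(\cdot,\mu)^2 u\overline v\,dw$ is clearly an inner product inducing it, once one checks definiteness — which follows since $m_w(\cdot,\mu) > 0$ everywhere by Proposition \ref{crit}(i), so $\|u\|_H = 0$ forces $u = 0$ a.e.)

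For completeness, I would take a Cauchy sequence $(u_n)$ in $H$. Then $(\nabla u_n)$ is Cauchy in $L^2_w(\Ri^d)^d$ and $(m_w(\cdot,\mu)\,u_n)$ is Cauchy in $L^2_w(\Ri^d)$, so they converge to some $g \in L^2_w(\Ri^d)^d$ and $h \in L^2_w(\Ri^d)$ respectively. Since $m_w(\cdot,\mu)$ is bounded below by a positive constant on every ball $B_j$ from Lemma \ref{xalpha psialpha} (indeed $m_w(\cdot,\mu) \sim 1/\rho_j$ on $B_j$ by Proposition \ref{crit}(iii)), convergence of $m_w(\cdot,\mu)\,u_n$ in $L^2_w$ forces $u_n \to u := h/m_w(\cdot,\mu)$ in $L^2_w(B_j)$ for each $j$, hence in $L^2_{w,\loc}(\Ri^d)$; a standard distributional argument then identifies $g = \nabla u$, so $u \in W^{1,2}_{w,\loc}(\Ri^d)$ with $\nabla u = g \in L^2_w$ and $m_w(\cdot,\mu)\,u = h \in L^2_w$, i.e. $u \in H$, and $\|u_n - u\|_H \to 0$. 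Thus $H$ is complete, and being a complete inner product space it is a Hilbert space.

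I do not expect any serious obstacle here: the statement is labelled ``a direct consequence,'' and indeed the equality $H = H'$ is purely a restatement of Proposition \ref{norm equiv}. The only place requiring a little care is the completeness argument — specifically, extracting an $L^2_{w,\loc}$ limit from the $H$-norm convergence and checking that the weak gradient passes to the limit; the key input making this routine is that $m_w(\cdot,\mu)$ is locally bounded away from $0$ and $\infty$ via Proposition \ref{crit}, so that the $H$-norm controls the local $W^{1,2}_w$-norm on each $B_j$. One could alternatively invoke completeness of $H'$ (or cite that weighted Sobolev spaces with $A_2$ weights are complete) and transport it across the norm equivalence, but the self-contained argument above is short enough to include directly.
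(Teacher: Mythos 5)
Your proof is correct and matches the paper's intended approach: the paper simply remarks that the corollary is ``a direct consequence of Proposition \ref{norm equiv}'' and gives no further details, and your two applications of parts (a) and (b) are exactly what is meant. Your added argument for completeness (using that $m_w(\cdot,\mu)$ is locally comparable to a positive constant on each $B_j$ by Proposition \ref{crit}(iii), so that $H$-convergence controls $L^2_{w,\loc}$-convergence and the weak gradient passes to the limit) fills in a step the paper leaves implicit, and it is sound.
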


Consider the quadratic form
\[
\gota(u,v) = \int_{\Ri^d} A \, \nabla u \cdot \nabla v \, dx + \int_{\Ri^d} u \, v \, d\pi
\]
on the domain
\[
D(\gota) 
= H \cap L^2_w(\Ri^d)
=  \big\{ u \in W^{1,2}_w(\Ri^d): u \in L^2_\pi(\Ri^d) \big\},
\]
where $d\pi = w \, d\mu$ and $H$ is given by \eqref{H} and
\[
W^{1,2}_w(\Ri^d) 
:= \big\{ u \in L^2_w(\Ri^d): \D_j u \in L^2_w(\Ri^d) \mbox{ for all } j \in \{1,\ldots,d\} \big\}.
\]
We endow $D(\gota)$ with the graph norm
\[
\|u\|_{D(\gota)} = \gota(u,u) + \|u\|_{L^2_w(\Ri^d)}
\]
for all $u \in D(\gota)$.
It follows from \eqref{degenerate} and Corollary \ref{H equiv} that 
\begin{equation} \label{graph norm equiv}
\|u\|_{D(\gota)}
\sim \int_{\Ri^d} |\nabla u|^2 \, dw + \int_{\Ri^d} |u|^2 \, m_w(\cdot,\mu)^2 \, dw + \int_{\Ri^d} |u|^2 \, dw
\end{equation}
for all $u \in D(\gota)$.

It is easy to see that $\gota$ is positive and symmetric.
We will show in addition that $\gota$ is also densely defined and closed.

We need the following auxiliary result.
In what follows define
\[
W^{1,2}_{w,0}(B) := \overline{\big( C_c^\infty(B), \|\cdot\|_{W^{1,2}_w} \big)}.
\]

\begin{lemm} \label{W12 to L2mu}
Let $B \subset \Ri^d$ be a ball.
Then the embedding $W^{1,2}_{w,0}(B) \hookrightarrow L^2(B,d\pi)$ is continuous.
\end{lemm}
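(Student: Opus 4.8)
The plan is to reduce the statement to the Poincaré-type inequality already recorded in Proposition \ref{norm equiv}, since $W^{1,2}_{w,0}(B)$ consists precisely of functions that can be approximated by $C_c^\infty(B)$ in the $W^{1,2}_w$-norm, and these functions automatically sit in the space $H$ (extend by zero outside $B$ to get a function on $\Ri^d$ with $\nabla u \in L^2_w(\Ri^d)$). The point is that for $u \in C_c^\infty(B)$, part (a) of Proposition \ref{norm equiv} — or equivalently Corollary \ref{H equiv} — gives $\int_{\Ri^d} |u|^2 \, d\pi \ls \int_{\Ri^d} |\nabla u|^2 \, dw + \int_{\Ri^d} |u|^2 \, m_w(\cdot,\mu)^2 \, dw$, provided $u \in L^2_\pi(\Ri^d)$ a priori; for compactly supported smooth $u$ the right-hand side is finite and $u\in L^2_\pi$ is clear, so there is no circularity.

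First I would observe that the $m_w(\cdot,\mu)^2$ weight on the right is under control on $B$: since $B$ is a fixed ball, one can cover $B$ by finitely many of the critical balls $B_j$ from Lemma \ref{xalpha psialpha}, or more simply use Proposition \ref{crit}(iii)--(iv) to bound $m_w(x,\mu) \le C_B$ uniformly for $x \in B$, where $C_B$ depends on $B$ (its center and radius) but not on $u$. Hence $\int_B |u|^2 \, m_w(\cdot,\mu)^2 \, dw \le C_B^2 \int_B |u|^2 \, dw \le C_B^2 \|u\|_{W^{1,2}_w(B)}^2$. Combining, for every $u \in C_c^\infty(B)$ one gets $\|u\|_{L^2(B,d\pi)}^2 \ls (1 + C_B^2)\, \|u\|_{W^{1,2}_w(B)}^2$, i.e. the embedding is bounded on the dense subspace $C_c^\infty(B)$.

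Second, I would upgrade this to all of $W^{1,2}_{w,0}(B)$ by a standard density/completeness argument: given $u \in W^{1,2}_{w,0}(B)$, take $u_n \in C_c^\infty(B)$ with $u_n \to u$ in $W^{1,2}_w$; the inequality just proved shows $(u_n)$ is Cauchy in $L^2(B,d\pi)$, hence converges to some $v$; passing to a subsequence converging $dw$-a.e. (and noting $d\pi = w\,d\mu$ is absolutely continuous with respect to... well, mutually controlled by $dw$ on bounded sets, or simply extract a further subsequence converging $\pi$-a.e.) one identifies $v = u$, and the norm inequality passes to the limit. This gives $\|u\|_{L^2(B,d\pi)} \ls \|u\|_{W^{1,2}_w(B)}$ for all $u \in W^{1,2}_{w,0}(B)$, which is the claimed continuity.

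The main obstacle, such as it is, is bookkeeping around the a.e.-convergence identification of the limit $v$ with $u$ when passing to the limit in two different (weighted) $L^2$ spaces — one must be a little careful that $W^{1,2}_w$-convergence yields a subsequence converging $dw$-a.e., and that $dw$-null sets are $d\pi$-null so that the $L^2(B,d\pi)$-limit agrees with $u$. Apart from that, everything is a direct application of Proposition \ref{norm equiv}/Corollary \ref{H equiv} together with the local boundedness of $m_w(\cdot,\mu)$ on $B$ from Proposition \ref{crit}; I do not anticipate any genuinely hard step.
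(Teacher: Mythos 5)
Your proof follows exactly the paper's argument: apply Proposition \ref{norm equiv} and bound $m_w(\cdot,\mu)$ uniformly on $B$ by covering it with finitely many of the critical balls $B_j$ from Lemma \ref{xalpha psialpha} and invoking Proposition \ref{crit}(iii). The paper writes the resulting inequality directly for all $u \in W^{1,2}_{w,0}(B)$; your explicit density step is an equally valid (and arguably cleaner) way to phrase the same thing, and the identification worry you flag at the end dissolves if one simply regards the embedding as, by definition, the unique continuous extension of the identity map on $C_c^\infty(B)$ — no pointwise comparison of the two limits is required, and one need not (and cannot in general) claim that $dw$-null sets are $d\pi$-null.
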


\begin{proof}
Let $\{x_j\}_{j \in \Ni}$ and $\{\psi_j\}_{j \in \Ni}$ be as in Lemma \ref{xalpha psialpha}.
Since $B$ is compact we can cover it by a finite number of balls $B_j := B(x_j,\rho_j)$.
Without loss of generality assume that $B \subset \cup_{j=1}^{j_0} B_j$ for some $j_0 \in \Ni^*$.

Therefore using Proposition \ref{norm equiv} one has
\begin{eqnarray*}
\int_B |u|^2 \, d\pi
&\ls& \int_B |\nabla u|^2 \, dw + \int_B |u|^2 \, m_w(\cdot,\mu)^2 \, dw
\\
&\le& \int_B |\nabla u|^2 \, dw + \sum_{j=1}^{j_0} \int_{B \cap B_j} |u|^2 \, m_w(\cdot,\mu)^2 \, dw
\\
&\ls& \int_B |\nabla u|^2 \, dw + \sum_{j=1}^{j_0} m_w(x_j,\mu)^2 \int_{B \cap B_j} |u|^2 \, dw 
\\
&\le& \left(1 \vee \sum_{j=1}^{j_0} m_w(x_j,\mu)^2 \right) \, \|u\|_{W^{1,2}_w(B)}
< \infty
\end{eqnarray*}
for all $u \in W^{1,2}_{w,0}(B)$, where we used Proposition \ref{crit}(iii) in the third step.

This verifies our claim.
\end{proof}

\begin{lemm}\label{lemm1}
The space $C_c^\infty(\Ri^d)$ is a form core for $\gota$.
Consequently, $\gota$ is densely defined in $L^2_w(\Ri^d)$.
\end{lemm}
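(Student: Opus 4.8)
The plan is to show that $C_c^\infty(\Ri^d)$ is dense in $D(\gota)$ with respect to the graph norm, which by \eqref{graph norm equiv} is equivalent to the norm
\[
u \mapsto \left( \int_{\Ri^d} |\nabla u|^2 \, dw + \int_{\Ri^d} |u|^2 \, m_w(\cdot,\mu)^2 \, dw + \int_{\Ri^d} |u|^2 \, dw \right)^{1/2}.
\]
The argument proceeds in two stages: first a truncation (cutoff) step to reduce to compactly supported functions, then a mollification/approximation step to pass from compactly supported $D(\gota)$-functions to smooth ones. For the truncation step, fix $u \in D(\gota)$ and let $(\psi_j)_{j \in \Ni}$ be the partition of unity from Lemma \ref{xalpha psialpha}. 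Using that $\sum_j \psi_j = 1$ with the finite overlap property (ii) and the gradient bound (iv), one builds cutoffs $\eta_R$ adapted to the scale of the critical function: concretely, set $\eta_R = \sum_{j \in J_R} \psi_j$ where $J_R$ indexes those $B_j$ meeting $B(0,R)$, so that $\eta_R$ is supported in a bounded set, equals $1$ near $B(0,R)$, and satisfies $|\nabla \eta_R(x)| \ls m_w(x,\mu)$ on account of (iv) and the local constancy $m_w(x,\mu) \sim 1/\rho_j$ on $B_j$ from Proposition \ref{crit}(iii). Then $\eta_R u \in D(\gota)$ has bounded support, and
\[
\|u - \eta_R u\|_{D(\gota)}^2 \ls \int_{\{\eta_R \ne 1\}} \big( |\nabla u|^2 + |u|^2 m_w(\cdot,\mu)^2 + |u|^2 \big)\, dw + \int_{\Ri^d} |u|^2 |\nabla \eta_R|^2 \, dw,
\]
and the last term is controlled by $\int_{\{\eta_R \ne 1\}} |u|^2 m_w(\cdot,\mu)^2 \, dw$ by the gradient estimate on $\eta_R$; both pieces tend to $0$ as $R \to \infty$ by dominated convergence, since $u \in D(\gota)$ makes the full integrand integrable.

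For the second stage, given $v \in D(\gota)$ with support in a ball $B$, one mollifies: $v_\varepsilon = v * \phi_\varepsilon$ with a standard mollifier. On the bounded set where everything lives, $w \in A_2$ is an $A_2$ weight, so $L^2_w$-convergence $\nabla v_\varepsilon \to \nabla v$ and $v_\varepsilon \to v$ holds by the standard weighted mollification theory (the maximal operator is bounded on $L^2_w$). The term $\int |v_\varepsilon - v|^2 m_w(\cdot,\mu)^2 \, dw$ is handled because $m_w(\cdot,\mu)$ is bounded above and below by positive constants on the bounded support (again Proposition \ref{crit}(iii)), reducing it to the plain $L^2_w$ statement. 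The $L^2_\pi$ part of the graph norm is then controlled via Lemma \ref{W12 to L2mu}: on the ball $B' \supset B$ containing all supports, $\|v_\varepsilon - v\|_{L^2(B',d\pi)} \ls \|v_\varepsilon - v\|_{W^{1,2}_w(B')} \to 0$. Combining both stages and a diagonal argument gives a sequence in $C_c^\infty(\Ri^d)$ converging to $u$ in the $D(\gota)$-norm, proving $C_c^\infty(\Ri^d)$ is a form core; density in $L^2_w(\Ri^d)$ is then immediate since $C_c^\infty(\Ri^d)$ is already dense there.

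The main obstacle I anticipate is the interaction between the cutoff functions and the weight $m_w(\cdot,\mu)^2$ in the graph norm: one must choose $\eta_R$ at the right (spatially varying) scale so that $|\nabla \eta_R|$ is genuinely dominated by $m_w(\cdot,\mu)$ rather than merely by $1/R$, which would be too weak to kill the cross term since $m_w$ can grow. This is exactly why the partition of unity of Lemma \ref{xalpha psialpha} — whose bumps live on balls of radius $\rho_w(x_j,\mu)$ — is the natural tool, and why the finite-overlap bound (ii) is needed to ensure $\eta_R \le 1$ and $|\nabla \eta_R| \ls m_w$ with constants independent of $R$. A secondary technical point is justifying that mollification does not destroy membership in $L^2_\pi$, which is precisely the content of Lemma \ref{W12 to L2mu} and is why that lemma was proved first.
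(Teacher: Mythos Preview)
Your proof is correct and follows the same two-step outline as the paper: truncate to compact support, then approximate by smooth functions, with $L^2_\pi$-convergence handled via Lemma \ref{W12 to L2mu}. The paper is much terser on both steps --- it simply asserts that after multiplying by a cut-off one lands in $W^{1,2}_{w,0}(B)$, and then reads off a $C_c^\infty$ approximating sequence from the definition of that space. Your explicit mollification argument (using boundedness of the maximal function on $L^2_w$ for $w \in A_2$) in fact supplies the justification the paper leaves implicit for why the truncated function belongs to $W^{1,2}_{w,0}(B)$ in the first place.

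One correction worth noting: the ``main obstacle'' you anticipate is not actually an obstacle. A standard cut-off $\eta_R$ with $\eta_R = 1$ on $B(0,R)$, $\supp \eta_R \subset B(0,2R)$ and $|\nabla \eta_R| \le C/R$ already suffices, since the cross term satisfies
\[
\int_{\Ri^d} |u|^2 \, |\nabla \eta_R|^2 \, dw \;\le\; \frac{C}{R^2} \int_{B(0,2R)\setminus B(0,R)} |u|^2 \, dw \;\longrightarrow\; 0,
\]
using only that $u \in L^2_w(\Ri^d)$, which is part of the definition of $D(\gota) = H \cap L^2_w(\Ri^d)$; the weight $m_w(\cdot,\mu)^2$ plays no role in this term. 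Your partition-of-unity construction via Lemma \ref{xalpha psialpha} is correct but more machinery than the situation requires. The adapted cut-off you build \emph{would} be essential if one were approximating in $H$ alone rather than in $D(\gota)$, since then the plain $L^2_w$-bound on $u$ is unavailable --- so your instinct is sound, just misplaced for this particular lemma.
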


\begin{proof}
Let $f \in D(\gota)$.
By multiplying $f$ with a cut-off function when necessary, we may assume that $\supp f \subset B$ and $f \in W^{1,2}_{w,0}(B)$ for some ball $B \subset \Ri^d$.
So there exists a sequence $\{f_j\}_{j \in \Ni} \subset C_c^\infty(B)$ such that $\lim_{j \to \infty} f_j = f$ in $W^{1,2}_{w,0}(B)$.
By Lemma \ref{W12 to L2mu} we also have that $\lim_{j \to \infty} f_j = f$ in $L^2(B,d\pi)$.
Hence $\lim_{j \to \infty} f_j = f$ in $D(\gota)$.
To finish note that $C_c^\infty(\Ri^d)$ is dense in $L^2_w(\Ri^d)$ by \cite[Theorem 1.1]{NTY}.
\end{proof}

\begin{lemm}\label{lemm2}
The form $\gota$ is closed in $L^2_w(\Ri^d)$.
\end{lemm}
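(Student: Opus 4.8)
The plan is to verify that $\gota$ is closed by showing that the form norm $\|\cdot\|_{D(\gota)}$ makes $D(\gota)$ a complete space. By \eqref{graph norm equiv} it suffices to show that $D(\gota)$, equipped with the equivalent norm
\[
u \mapsto \int_{\Ri^d} |\nabla u|^2 \, dw + \int_{\Ri^d} |u|^2 \, m_w(\cdot,\mu)^2 \, dw + \int_{\Ri^d} |u|^2 \, dw,
\]
is complete. So first I would take a Cauchy sequence $\{u_n\}_{n \in \Ni} \subset D(\gota)$ with respect to this norm. Then $\{u_n\}$ is Cauchy in $L^2_w(\Ri^d)$, each $\{\D_j u_n\}$ is Cauchy in $L^2_w(\Ri^d)$, and $\{m_w(\cdot,\mu) \, u_n\}$ is Cauchy in $L^2_w(\Ri^d)$. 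By completeness of $L^2_w(\Ri^d)$ there exist $u, v_1, \ldots, v_d, g \in L^2_w(\Ri^d)$ with $u_n \to u$, $\D_j u_n \to v_j$, and $m_w(\cdot,\mu) \, u_n \to g$ in $L^2_w(\Ri^d)$.

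The next step is to identify the limits. Since $w \in A_2$, $L^2_w$ convergence implies $L^1_{\loc}$ convergence (by Cauchy--Schwarz against $w^{-1} \in L^1_{\loc}$ on bounded sets), hence convergence in $\mathcal{D}'(\Ri^d)$; so $v_j = \D_j u$ as distributions and $u \in W^{1,2}_{w,\loc}(\Ri^d)$ with $\nabla u \in L^2_w(\Ri^d)$. Similarly, passing to a subsequence converging $w$-a.e., we get $m_w(\cdot,\mu) \, u_n \to m_w(\cdot,\mu) \, u$ a.e., so $g = m_w(\cdot,\mu) \, u$ and thus $m_w(\cdot,\mu) \, u \in L^2_w(\Ri^d)$. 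By Corollary \ref{H equiv} (or directly by Proposition \ref{norm equiv}(b)) this gives $u \in L^2_\pi(\Ri^d)$, hence $u \in D(\gota)$, and moreover $u_n \to u$ in the norm above, which proves completeness.

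Alternatively — and perhaps more cleanly — one can invoke Corollary \ref{H equiv}: the space $H$ is already known to be a Hilbert space, and $D(\gota) = H \cap L^2_w(\Ri^d)$ with $\|u\|_{D(\gota)}^2 \sim \|u\|_H^2 + \|u\|_{L^2_w(\Ri^d)}^2$. A Cauchy sequence in $D(\gota)$ is then Cauchy in the Hilbert space $H$ and in $L^2_w(\Ri^d)$, so it converges in $H$ to some $u \in H$ and in $L^2_w(\Ri^d)$ to some $\tilde u$; since convergence in either norm forces convergence in $L^1_{\loc}$ hence in $\mathcal{D}'(\Ri^d)$, we get $u = \tilde u \in H \cap L^2_w(\Ri^d) = D(\gota)$, with convergence in the graph norm. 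Either route closes the argument.

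The main obstacle is the bookkeeping around the identification of distributional limits with the pointwise/$L^2_w$ limits — i.e., making sure that $w$-weighted $L^2$ convergence is strong enough to pass to limits both in the sense of distributions (to recover $\nabla u$) and pointwise a.e. (to recover $m_w(\cdot,\mu)\, u$). This is where the hypothesis $w \in A_2$, which guarantees $w^{-1} \in L^1_{\loc}(\Ri^d)$, is essential; once that is in place the rest is routine. Nothing here requires the finer properties of $\rho_w$ beyond what Corollary \ref{H equiv} already packages.
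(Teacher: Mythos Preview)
Your proof is correct and follows essentially the same approach as the paper's: both verify closedness by showing that a Cauchy sequence in $D(\gota)$ converges in each constituent norm and then identify the limits. The only cosmetic difference is that the paper works directly with the original graph norm, using that $\{f_j\}$ is Cauchy in $W^{1,2}_w(\Ri^d)$ and in $L^2_\pi(\Ri^d)$, whereas you first pass via \eqref{graph norm equiv} to the equivalent norm involving $m_w(\cdot,\mu)$; your added remarks about $w\in A_2$ ensuring $L^1_{\loc}$ (hence distributional) convergence make explicit a step the paper leaves implicit.
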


\begin{proof}
Let $\{f_j\}_{j \in \Ni} \subset D(\gota)$ be a Cauchy sequence.
Then $\{f_j\}_{j \in \Ni}$ is Cauchy in $W^{1,2}_w(\Ri^d)$ and $L^2_\pi(\Ri^d)$.
Hence there exist functions $u \in W^{1,2}_w(\Ri^d)$ and $f \in L^2_\pi(\Ri^d)$ such that $\lim_{j \to \infty} f_j = u$ in $W^{1,2}_w(\Ri^d)$ and $\lim_{j \to \infty} f_j = f$ in $L^2_\pi(\Ri^d)$.
By using a subsequence if necessary we may conclude that $\lim_{j \to \infty} f_j = u$ a.e.\ in $\Ri^d$.
Hence $u = f$.
It follows that $u \in D(\gota)$ and $\lim_{j \to \infty} f_j = u$ in $D(\gota)$.
\end{proof}

We are ready to give the formal definition of the operator $L$. From Lemmas \ref{lemm1} and \ref{lemm2}, there exists a unique self-adjoint operator 
\[
Lu := -\frac{1}{w} \, \di(A \, \nabla u) +\mu \, u
\]
on the domain
\[
D(L) = \{u \in D(\gota): Lu \in L^2_w(\Ri^d) \}
\]
such that
\[
\gota(u,v) = \langle Lu, v \rangle_{L^2_w(\Ri^d)} 
\]
for all $u \in D(L)$ and $v \in D(\gota)$.

\subsection{Some estimates on solutions to the equation $L_0u=f$}

Define
\[
L_0 = -\frac{1}{w} \, \di( A \, \nabla).
\]
Let $\Gamma_0(\cdot,\cdot)$ be its fundamental solution in $\Ri^d$.

\begin{ddefi} \label{weak sol of L0}
	Let $\Omega \subset \Ri^d$ be open.
	Let $u \in W^{1,2}_{w,\loc}(\Omega)$ and $f \in L^1_{w,\loc}(\Omega)$.
	Then $u$ is called a weak solution of $L_0 u = f$ in $\Omega$ if
	\[
	\int_\Omega A \, \nabla u \cdot \nabla \psi \, dx = \int_\Omega f \, \psi \, dw
	\]
	for all $\psi \in C^1_c(\Omega)$, where $dw = w \, dx$.
\end{ddefi}

\begin{ddefi} \label{sub-sol L0}
	Let $\Omega \subset \Ri^d$ be open and $u \in W^{1,2}_{w,\loc}(\Omega)$.
	Then $u$ is called a sub-solution of $L_0$ in $\Omega$ if
	\[
	\int_\Omega A \, \nabla u \cdot \nabla \psi \, dx
	\le 0
	\]
	for all non-negative function $\psi \in C^1_c(\Omega)$.
\end{ddefi}

The following two estimates are taken from \cite[Lemmas 8 and 7]{KS} respectively.
\begin{lemm}\label{u sub L0}
	Let $x \in \Ri^d$, $R > 0$.
	Let $u$ be a non-negative sub-solution of $L_0$ in $B(x,2R)$.
	Then for all $\sigma \in (0,1)$ there exists a constant $C = C(\sigma)$ such that
	\[
	\sup_{B(x,\sigma R)} u \le C \, \frac{1}{w(B(x,R))} \, \int_{B(x,R)} u \, dw.
	\]
\end{lemm}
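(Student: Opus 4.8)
The statement to be proved is Lemma~\ref{u sub L0}, a local boundedness (De Giorgi–Nash–Moser type) estimate for nonnegative subsolutions of the degenerate operator $L_0 = -\frac{1}{w}\di(A\nabla\cdot)$ on a ball, with the sup on a slightly smaller ball controlled by the weighted $L^1$-average on the larger ball. Since the excerpt explicitly attributes this to \cite[Lemma~8]{KS}, the "proof" is really a Moser iteration carried out in the degenerate weighted setting, and the plan below is how I would reconstruct it.

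\textbf{Approach: Moser iteration.} The plan is to run the standard Moser iteration scheme, replacing Lebesgue measure by $dw = w\,dx$ throughout and using the two structural facts available for $w \in A_2 \cap RD_\beta$: a weighted Sobolev–Poincaré inequality (with a gain exponent $\kappa > 1$ coming from the reverse-doubling/$A_2$ structure, cf.\ Proposition~\ref{Poincare} and the $A_2$ theory) and the doubling property \eqref{doubling from w}. First I would fix $\sigma \in (0,1)$ and, for $\sigma \le \sigma' < \sigma'' \le 1$, choose a cutoff $\eta \in C_c^\infty(B(x,\sigma''R))$ with $\eta \equiv 1$ on $B(x,\sigma'R)$ and $|\nabla\eta| \lesssim \big((\sigma''-\sigma')R\big)^{-1}$. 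Then I would test the subsolution inequality in Definition~\ref{sub-sol L0} against $\psi = \eta^2 u^{p}$ (for $u$ replaced by $u+\varepsilon$ and then $\varepsilon\to0$, or after truncation to make this admissible) with $p \ge 1$; using the ellipticity bounds \eqref{degenerate} this yields a Caccioppoli-type estimate
\[
\int \eta^2 |\nabla u^{(p+1)/2}|^2 \, dw \;\lesssim\; \frac{(p+1)^2}{((\sigma''-\sigma')R)^2} \int_{B(x,\sigma''R)} u^{p+1}\, dw .
\]

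\textbf{Iteration step.} Next I would apply the weighted Sobolev inequality to $\eta\, u^{(p+1)/2}$ to pass from the gradient bound to a reverse-Hölder gain: with $v = u^{(p+1)/2}$,
\[
\Big(\barint_{B'} |v|^{2\kappa}\,dw\Big)^{1/\kappa} \;\lesssim\; R^2 \barint_{B'} |\nabla v|^2\,dw + \barint_{B'} |v|^2\,dw ,
\]
where $\kappa = \kappa(w) > 1$. Combining this with the Caccioppoli estimate and writing $\Phi(p,r) := \big(\barint_{B(x,r)} u^{p}\,dw\big)^{1/p}$, I obtain an inequality of the form $\Phi(\kappa q, \sigma'R) \lesssim \big(\tfrac{C q}{(\sigma''-\sigma')}\big)^{2/q}\Phi(q,\sigma''R)$ with $q = p+1$. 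Iterating with $q_i = 2\kappa^i$ and a geometric choice of radii $r_i \downarrow \sigma R$ (so that $\sum (\sigma''_i-\sigma'_i)^{-2/q_i} < \infty$ and $\prod q_i^{2/q_i} < \infty$), the left sides converge to $\sup_{B(x,\sigma R)} u$ while the right side telescopes to $C\,\Phi(2,R)$, giving
\[
\sup_{B(x,\sigma R)} u \;\lesssim\; \Big(\frac{1}{w(B(x,R))}\int_{B(x,R)} u^2\,dw\Big)^{1/2}.
\]
Finally, to upgrade the $L^2$-average on the right to an $L^1$-average, I would use the standard interpolation/self-improvement trick: bound the $L^2$-average crudely by $\big(\sup_{B(x,R')} u\big)^{1/2}$ times the $L^1$-average to the power $1/2$, run the iteration on a one-parameter family of radii to make the sup term absorbable via Young's inequality (the usual "$\sup \le \tfrac12\sup + C\,L^1$-average" dichotomy lemma), and conclude.

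\textbf{Main obstacle.} The one genuinely non-routine ingredient — everything else being a bookkeeping exercise once the measure is $dw$ — is establishing the weighted Sobolev inequality with a \emph{self-improving exponent} $\kappa > 1$ on balls, uniformly in the ball. For general $A_2$ weights one only has a Poincaré inequality (Proposition~\ref{Poincare}) without a gain; the gain $\kappa > 1$ is exactly what the reverse-doubling hypothesis $w \in RD_\beta$ in \eqref{(RD)} provides (via the Fabes–Kenig–Serapioni machinery of \cite{FKS}), so I would invoke that theory — or cite \cite{KS} directly as the paper does — rather than prove it from scratch. I expect the write-up here to simply reference \cite[Lemma~8]{KS}, since reproducing the full degenerate Moser iteration would be lengthy; but the iteration sketch above is the content behind it, and the only delicate point is keeping track of which constants depend on $\sigma$ (through the geometric radii) versus being universal (through $\Lambda$, the $A_2$ constant of $w$, and the reverse-doubling constant).
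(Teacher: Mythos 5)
You correctly anticipated the paper's actual ``proof'': the text preceding Lemma~\ref{u sub L0} states only that it (together with Proposition~\ref{funda0}) is taken from \cite[Lemmas 8 and 7]{KS}, with no argument given. Your reconstruction via degenerate Moser iteration — Caccioppoli from testing against $\eta^2 u^p$, weighted Sobolev self-improvement with gain $\kappa>1$ supplied by the $A_2 \cap RD_\beta$ structure, geometric iteration, and the standard interpolation/absorption step to pass from an $L^2$-average to an $L^1$-average — is a faithful sketch of the content behind \cite[Lemma~8]{KS} (and the Fabes–Kenig–Serapioni machinery in \cite{FKS}), so it is a valid reading of what the cited result rests on even though the paper itself proves nothing here.
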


\begin{prop}\label{funda0}
	There exists a $C > 0$ such that
	\[
	0 \le \Gamma_0(x,y) \le C \, \frac{|x-y|^2}{w(B(x,|x-y|))}
	\]
	for all $x, y \in \Ri^d$.
\end{prop}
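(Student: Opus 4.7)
The plan splits into non-negativity and the upper bound. For non-negativity, the standard construction realizes $\Gamma_0$ as the monotone pointwise limit (as $R \to \infty$) of the Dirichlet Green functions $G_R$ for $L_0$ on $B(y, R)$; each $G_R \ge 0$ by the maximum principle for the degenerate-elliptic operator $L_0$, available once $A$ satisfies \eqref{degenerate} and $w \in A_2$. Finiteness of the limit off the diagonal (hence the existence of $\Gamma_0$) is precisely where the reverse-doubling hypothesis $w \in RD_\beta$ with $\beta > 2$ will enter.

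For the upper bound, fix $x \ne y$ and set $r := |x - y|/4$. Since $y \notin \overline{B(x, 2r)}$, the function $u(z) := \Gamma_0(z, y)$ is a non-negative weak solution of $L_0 u = 0$ in $B(x, 2r)$, hence in particular a non-negative sub-solution there. Applying Lemma \ref{u sub L0} with $\sigma = 1/2$ yields
\[
\Gamma_0(x, y) \le \sup_{B(x, r)} u \le \frac{C}{w(B(x, 2r))} \int_{B(x, 2r)} \Gamma_0(z, y) \, dw(z).
\]
Combined with the doubling of $w$ (which gives $w(B(x, 2r)) \sim w(B(x, |x - y|))$), it therefore suffices to establish the mean-value bound
\[
\int_{B(x, 2r)} \Gamma_0(z, y) \, dw(z) \lesssim r^2.
\]

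To produce this bound I would follow the route of \cite{KS}: use the weighted Sobolev inequality associated with $w \in A_2$ together with the reverse doubling to derive a weak-type Marcinkiewicz estimate for $\Gamma_0(\cdot, y)$, then integrate it over $B(x, 2r)$. The essential role of $\beta > 2$ is to ensure the convergence
\[
\int_r^\infty \frac{t}{w(B(x, t))} \, dt \lesssim \frac{r^\beta}{w(B(x, r))} \int_r^\infty t^{1 - \beta} \, dt = \frac{r^2}{(\beta - 2) \, w(B(x, r))},
\]
which is the degenerate-weighted analogue of $\int_r^\infty t^{1-d} \, dt < \infty$ when $d > 2$ for the classical Laplacian. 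Once this integral converges, the weak-type estimate inherits the right order of magnitude $r^2/w(B(x, r))$, and the sub-solution mean-value inequality in Lemma \ref{u sub L0} turns it into the desired pointwise bound.

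The main obstacle is the weak-type step: converting the ``radial'' estimate above into a genuine pointwise bound $\Gamma_0(x, y) \lesssim |x - y|^2 / w(B(x, |x - y|))$ requires a delicate interplay between the weighted Sobolev inequality and the reverse doubling, and is the place where both $w \in A_2$ and $\beta > 2$ are used in an essential way. Everything else (non-negativity, the mean-value reduction, and the final doubling clean-up) is routine once this ingredient is in hand.
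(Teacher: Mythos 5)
The paper does not prove this proposition at all: the sentence immediately preceding it states that Lemma \ref{u sub L0} and Proposition \ref{funda0} ``are taken from \cite[Lemmas 8 and 7]{KS} respectively,'' so the statement is imported verbatim from Kurata--Sugano (see also \cite[Theorem 1.3]{CW}). There is therefore no in-paper argument to compare yours against; the only fair comparison is with the cited literature.

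Measured against that, your outline correctly reconstructs the shape of the known proof: non-negativity via monotone exhaustion by Dirichlet Green functions, the reduction of the pointwise bound to a mean-value bound through the sub-solution estimate of Lemma \ref{u sub L0}, and the computation $\int_r^\infty t\,w(B(x,t))^{-1}dt \lesssim r^2/w(B(x,r))$ showing exactly where $w\in RD_\beta$ with $\beta>2$ replaces the classical condition $d>2$. (One small quantitative slip: with $r=|x-y|/4$ and $\sigma=1/2$, Lemma \ref{u sub L0} requires $u$ to be a sub-solution in $B(x,4r)=B(x,|x-y|)$, whose closure contains $y$; take $r=|x-y|/8$ to leave room.) The genuine gap is the step you yourself flag as ``the main obstacle'': the passage from the weighted Sobolev inequality to a weak-type estimate for $\Gamma_0(\cdot,y)$ of the right size, and from there to the mean-value bound $\int_{B(x,2r)}\Gamma_0(z,y)\,dw(z)\lesssim r^2$. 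That estimate is the entire analytic content of \cite[Lemma 7]{KS} and of the Green-function construction in \cite{CW}; asserting that it ``follows the route of [KS]'' is a citation, not a proof. So your proposal is a correct and well-oriented plan, on par with what the paper itself offers (a reference), but it is not a self-contained proof: to complete it you would need to actually run the Moser/Marcinkiewicz argument in the weighted setting, i.e.\ establish the distributional inequality $w(\{z: \Gamma_0(z,y)>\lambda\}) \lesssim w(B(y,r_\lambda))$ with $r_\lambda$ defined by $\lambda = r_\lambda^2/w(B(y,r_\lambda))$, which is where $w\in A_2$ enters through the Sobolev and Poincar\'e inequalities of \cite{FKS}.
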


\subsection{Existence of solutions/subsolutions}

\begin{ddefi} \label{weak sol of L}
Let $\Omega \subset \Ri^d$ be open.
Let $u \in W^{1,2}_{w,\loc}(\Omega)$ and $f \in L^1_{w,\loc}(\Omega)$.
Then $u$ is called a weak solution of $Lu = f$ in $\Omega$ if
\[
\int_\Omega A \, \nabla u \cdot \nabla \psi \, dx + \int_\Omega u \, \psi \, d\pi
= \int_\Omega f \, \psi \, dw
\]
for all $\psi \in C^1_c(\Omega)$, where we remind that $d\pi = w \, d\mu$ and $dw = w \, dx$.
\end{ddefi}

\begin{ddefi}
Let $\Omega \subset \Ri^d$ be open and $u \in W^{1,2}_{w,\loc}(\Omega)$.
Then $u$ is called a sub-solution of $L$ in $\Omega$ if
\[
\int_\Omega A \, \nabla u \cdot \nabla \psi \, dx + \int_\Omega u \, \psi \, d\pi
\le 0
\]
for all non-negative function $\psi \in C^1_c(\Omega)$.
\end{ddefi}

\begin{prop} \label{unique}
Let $f \in L^1_{w,\loc}(\Ri^d)$ be such that $m_w(\cdot,\mu)^{-1} \, f(\cdot) \in L^2_w(\Ri^d)$.
Then $Lu = f$ has a unique weak solution $u_f \in H$, where $H$ is defined by \eqref{H}.
\end{prop}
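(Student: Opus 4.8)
The plan is to realize the solution as the Riesz representation of a bounded linear functional on the Hilbert space $H$. First I would equip $H$ with the inner product inducing the norm $\|\cdot\|_H$ from Corollary \ref{H equiv}; by that corollary $H$ is a Hilbert space, and moreover $H = H'$ with equivalent norms, so $\int_{\Ri^d}|u|^2\,d\pi = \int_{\Ri^d}|u|^2\,w\,d\mu < \infty$ for every $u \in H$. Define the bilinear form
\[
\mathfrak{b}(u,v) = \int_{\Ri^d} A\,\nabla u \cdot \nabla v\, dx + \int_{\Ri^d} u\,v\,d\pi
\]
on $H \times H$. Using the ellipticity bound \eqref{degenerate}, the upper bound gives $|\mathfrak{b}(u,v)| \ls \|u\|_H\,\|v\|_H$ (continuity), and the lower bound together with the equivalence $H = H'$ gives $\mathfrak{b}(u,u) \gtrsim \int|\nabla u|^2\,dw + \int|u|^2\,d\pi \sim \int|\nabla u|^2\,dw + \int|u|^2 m_w(\cdot,\mu)^2\,dw = \|u\|_H^2$ (coercivity), where I invoke Proposition \ref{norm equiv} for the last equivalence. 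So $\mathfrak{b}$ is a bounded, coercive (symmetric) bilinear form on $H$.

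Next I would check that $v \mapsto \int_{\Ri^d} f\,v\,dw$ is a bounded linear functional on $H$. Writing $\int f v\,dw = \int (m_w(\cdot,\mu)^{-1} f)\,(m_w(\cdot,\mu) v)\,dw$ and applying Cauchy--Schwarz in $L^2_w$, this is bounded by $\|m_w(\cdot,\mu)^{-1} f\|_{L^2_w} \, \|m_w(\cdot,\mu) v\|_{L^2_w} \le \|m_w(\cdot,\mu)^{-1} f\|_{L^2_w}\,\|v\|_H$, which is finite by hypothesis. By the Lax--Milgram theorem (or, since $\mathfrak{b}$ is symmetric, directly by the Riesz representation theorem applied to the equivalent inner product $\mathfrak{b}$), there is a unique $u_f \in H$ with $\mathfrak{b}(u_f,v) = \int f v\,dw$ for all $v \in H$. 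Since $C^1_c(\Ri^d) \subset H$, restricting the identity to test functions $\psi \in C^1_c(\Ri^d)$ shows $u_f$ is a weak solution of $Lu = f$ in the sense of Definition \ref{weak sol of L}, and uniqueness in $H$ is immediate from coercivity: if $u_1, u_2 \in H$ both solve it, then $\mathfrak{b}(u_1 - u_2, v) = 0$ for all $v \in C^1_c(\Ri^d)$, hence (by density of $C^1_c$ in $H$, which follows along the lines of Lemma \ref{lemm1}) for all $v \in H$, and taking $v = u_1 - u_2$ forces $u_1 = u_2$.

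The main obstacle I anticipate is not the abstract Lax--Milgram machinery but the bookkeeping needed to justify that the integrals in $\mathfrak{b}(u,v)$ are absolutely convergent for $u,v \in H$ and that $C^1_c(\Ri^d)$ is dense in $H$ (so that testing against $C^1_c$ genuinely characterizes $u_f$ and yields uniqueness). The first point is handled by the ellipticity \eqref{degenerate} for the gradient term and by $H = H'$ plus Cauchy--Schwarz for the $d\pi$ term; the density point requires a cutoff-and-mollification argument patterned on the proof of Lemma \ref{lemm1}, using the partition of unity from Lemma \ref{xalpha psialpha} together with Lemma \ref{W12 to L2mu} to control the $L^2_\pi$ (equivalently $m_w(\cdot,\mu) L^2_w$) norm of the approximants. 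Once these two technical points are in place, the existence and uniqueness of $u_f \in H$ follow immediately.
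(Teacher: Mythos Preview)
Your proposal is correct and is exactly the approach the paper takes: the paper's proof consists of the single sentence ``This is immediate from the Lax--Milgram theorem,'' and you have simply unpacked the verification of boundedness and coercivity of the bilinear form on $H$ and boundedness of the functional $v\mapsto\int fv\,dw$ that makes that appeal legitimate. Your additional care about density of $C^1_c$ in $H$ (to pass between the variational identity on $H$ and the weak formulation against $C^1_c$ test functions) is a detail the paper leaves implicit.
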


\begin{proof}
This is immediate from the Lax-Milgram theorem.
\end{proof}

Schwartz kernel theorem now ensures that there exists a unique distributional $\Gamma_\mu(\cdot,\cdot)$ such that the representation
\[
u_f(x) = \int_{\Ri^d} \Gamma_\mu(x,y) \, f(y) \, dw(y)
\]
holds for a.e.\ $x \in \Ri^d$, $f \in L^2_{w,c}(\Ri^d)$, where $u_f$ is as in Proposition \ref{unique}.
Such a $\Gamma_\mu(\cdot,\cdot)$ in fact enjoys further properties as stated in Proposition \ref{funda prop} below.

Recall that 
\[
L_0 = -\frac{1}{w} \, \di( A \, \nabla).
\]

\begin{lemm} \label{sub lem 1}
Let $u \in W^{1,2}_{w,\loc}(\Omega)$ be a sub-solution of $Lu = 0$ in $\Omega$.
Then $u^+ := u \vee 0$ is a sub-solution of $L_0$ in $\Omega$. 
\end{lemm}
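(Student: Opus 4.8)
The plan is to test the sub-solution inequality for $Lu=0$ against a carefully chosen non-negative test function and show that the resulting inequality for $u^+$ reduces to the sub-solution inequality for $L_0$ after discarding the (non-negative) contribution coming from the measure $\pi$. First I would recall that $u$ being a sub-solution of $Lu=0$ in $\Omega$ means $\int_\Omega A\,\nabla u\cdot\nabla\psi\,dx+\int_\Omega u\,\psi\,d\pi\le 0$ for all non-negative $\psi\in C^1_c(\Omega)$. The standard device is to replace $\psi$ by $\psi_\varepsilon := \psi\,\eta_\varepsilon(u)$, where $\eta_\varepsilon$ is a smooth, non-decreasing approximation of the map $s\mapsto \one_{\{s>0\}}$ (for instance $\eta_\varepsilon(s)=0$ for $s\le 0$, increasing to $1$ on $[0,\varepsilon]$, and $\equiv 1$ for $s\ge\varepsilon$), so that $\psi_\varepsilon\ge 0$, and $\eta_\varepsilon(u)\to \one_{\{u>0\}}$ while $\eta_\varepsilon'(u)\ge 0$. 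One has to check $\psi_\varepsilon$ is an admissible test function, i.e.\ that $\psi_\varepsilon\in W^{1,2}_{w,c}(\Omega)$ and that by density the sub-solution inequality extends to such test functions; this is routine once one notes $u\in W^{1,2}_{w,\loc}$ and $\eta_\varepsilon$ is Lipschitz.

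Next I would expand the gradient term via the chain rule: $\nabla\psi_\varepsilon = \eta_\varepsilon(u)\,\nabla\psi + \psi\,\eta_\varepsilon'(u)\,\nabla u$, so that
\[
\int_\Omega A\,\nabla u\cdot\nabla\psi\,\eta_\varepsilon(u)\,dx + \int_\Omega \psi\,\eta_\varepsilon'(u)\,A\,\nabla u\cdot\nabla u\,dx + \int_\Omega u\,\psi\,\eta_\varepsilon(u)\,d\pi \le 0.
\]
By the ellipticity lower bound \eqref{degenerate} the middle integrand is $\ge \Lambda^{-1}\,\psi\,\eta_\varepsilon'(u)\,w\,|\nabla u|^2\ge 0$, and since $\mu$ (hence $\pi$) is a positive measure, $\psi\ge 0$, and $u\,\eta_\varepsilon(u)\ge 0$ pointwise (as $\eta_\varepsilon$ vanishes on $(-\infty,0]$), the last integral is $\ge 0$ as well. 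Dropping these two non-negative terms leaves
\[
\int_\Omega A\,\nabla u\cdot\nabla\psi\,\eta_\varepsilon(u)\,dx \le 0.
\]
Then I would let $\varepsilon\to 0$: $\eta_\varepsilon(u)\to\one_{\{u>0\}}$ boundedly and a.e., and on the set $\{u>0\}$ one has $\nabla u^+=\nabla u$ while $\nabla u^+=0$ a.e.\ on $\{u\le 0\}$, so dominated convergence (the dominating function being $\Lambda\,w\,|\nabla u|\,|\nabla\psi|\in L^1$ on the compact $\supp\psi$) yields $\int_\Omega A\,\nabla u^+\cdot\nabla\psi\,dx\le 0$ for every non-negative $\psi\in C^1_c(\Omega)$, which is exactly Definition \ref{sub-sol L0} for $u^+$. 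One also needs $u^+\in W^{1,2}_{w,\loc}(\Omega)$, which is standard since truncation is a Lipschitz operation preserving local weighted Sobolev regularity.

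The main obstacle I anticipate is purely technical rather than conceptual: justifying that $\psi_\varepsilon=\psi\,\eta_\varepsilon(u)$ is a legitimate test function in the defining inequality, since the definition is literally stated only for $\psi\in C^1_c(\Omega)$, and making the limiting arguments rigorous in the degenerate weighted setting (so that all integrals are finite and dominated convergence applies, using $w\in A_2$ and $u\in W^{1,2}_{w,\loc}$). A clean way around the first issue is to first note, by a density/approximation argument, that the sub-solution inequality persists for all non-negative $\psi\in W^{1,2}_{w}(\Omega)$ with compact support in $\Omega$ (this uses Lemma \ref{W12 to L2mu} to control the $\pi$-term), and then observe $\psi_\varepsilon$ lies in that class. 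The rest is the routine chain-rule-plus-truncation computation sketched above.
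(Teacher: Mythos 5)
Your proposal is correct and is essentially the paper's own argument. The paper tests against $\psi=\phi\,\tfrac{u^+}{u^++\epsilon}$ with $\phi\in C^1_c(\Omega)$ non-negative, which is exactly a concrete (Lipschitz) instance of your $\eta_\varepsilon(u)$; it then expands by the chain rule, notes that the term $\int\psi\,\eta'_\varepsilon(u)\,A\nabla u\cdot\nabla u$ and the $d\pi$ term are both non-negative and can be dropped, and passes to the limit $\epsilon\to0$ by dominated convergence. Your version is a touch more careful than the paper's in one respect: the paper tacitly treats $\phi\,\tfrac{u^+}{u^++\epsilon}$ as if it were in $C^1_c(\Omega)$, whereas you correctly flag that one must first extend the test-function class to compactly supported $W^{1,2}_w$ functions (using Lemma \ref{W12 to L2mu}) before plugging in a cutoff depending on $u$; that is a genuine (if routine) gap in the published argument that your write-up fills.
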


\begin{proof}
Let $\phi \in C_c^1(\Omega)$ be positive and set 
\[
\psi = \phi \times \frac{u^+}{u^+ + \epsilon} \in C_c^1(\Omega)
\]
for each $\epsilon > 0$.
Since $u$ is a sub-solution of $Lu = f$ in $\Omega$, it follows that
\begin{equation} \label{sub 1}
\int_\Omega A \, \nabla u \cdot \nabla \psi \, dx + \int_\Omega u \, \psi \, d\pi \le 0.
\end{equation}
Observe that the left-hand side equals to
\begin{eqnarray*}
&& \int_\Omega A \, \nabla u \cdot (\nabla \phi) \, \frac{u^+}{u^+ + \epsilon} \, dx
+ \int_\Omega A \, \nabla u \cdot (\nabla u^+)  \, \frac{\epsilon \, \phi}{(u^+ + \epsilon)^2} \, dx
+ \int_\Omega u \, \phi \, \frac{u^+}{u^+ + \epsilon} \, d\pi
\\
&=& \int_{[u>0]} A \, \nabla u \cdot (\nabla \phi) \, \frac{u}{u + \epsilon} \, dx
+ \int_{[u>0]} A \, \nabla u \cdot (\nabla u) \, \frac{\epsilon \, \phi}{(u + \epsilon)^2} \, dx
+ \int_{[u>0]} \phi \, \frac{u^2}{u + \epsilon} \, d\pi
\end{eqnarray*}
for all $\epsilon > 0$, where we used 
\[
\nabla (u^+) = 
\left\{
\begin{array}{ll}
\nabla u & \mbox{on } [u > 0],
\\
0 & \mbox{otherwise},
\end{array}
\right.
\]
(cf.\ \cite[Lemma 7.6]{GT}) and the definition of $u^+$.

By taking the limits on both sides of \eqref{sub 1} as $\epsilon \longrightarrow 0$ we obtain
\begin{eqnarray*}
\int_{\Omega} A \, \nabla u^+ \cdot \nabla \phi \, dx
&=& \int_{[u>0]} A \, \nabla u \cdot \nabla \phi \, dx
\le \int_{[u>0]} A \, \nabla u \cdot \nabla \phi \, dx + \int_{[u>0]} \phi \, u \, d\pi 
\le 0.
\end{eqnarray*}
Since $0 \le \phi \in C_c^1(\Omega)$ is arbitrary, a density argument justifies the claim.
\end{proof}

\begin{lemm} \label{sub lem 2}
Let $u \in W^{1,2}_{w,\loc}(\Omega)$ be a weak solution of $Lu = 0$ in $\Omega$.
Then $u^+$ and $|u|$ are sub-solutions of $L_0$ in $\Omega$. 
\end{lemm}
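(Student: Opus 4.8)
The plan is to reduce the two claims about $u^{+}$ and $|u|$ to the single-sided statement already available in Lemma \ref{sub lem 1}. First I would observe that a weak solution of $Lu=0$ in $\Omega$ is in particular a sub-solution of $Lu=0$ in $\Omega$: indeed, for non-negative $\psi\in C_c^1(\Omega)$ the defining identity
\[
\int_\Omega A\,\nabla u\cdot\nabla\psi\,dx+\int_\Omega u\,\psi\,d\pi=0
\]
trivially gives ``$\le 0$''. Hence Lemma \ref{sub lem 1} applies directly and yields that $u^{+}$ is a sub-solution of $L_0$ in $\Omega$. This disposes of the first half with essentially no work.

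For $|u|$, the natural route is to apply the same reasoning to $-u$. Since $L$ is linear, $-u$ is again a weak solution of $L(-u)=0$ in $\Omega$, hence (as above) a sub-solution, so Lemma \ref{sub lem 1} gives that $(-u)^{+}=u^{-}:=(-u)\vee 0$ is a sub-solution of $L_0$ in $\Omega$. Now $|u|=u^{+}+u^{-}$, and since the class of sub-solutions of $L_0$ in $\Omega$ is closed under addition (the defining inequality $\int_\Omega A\,\nabla v\cdot\nabla\psi\,dx\le 0$ for non-negative $\psi$ is preserved under sums), it follows that $|u|$ is a sub-solution of $L_0$ in $\Omega$. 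One should check the mild membership point that $u^{+},u^{-},|u|\in W^{1,2}_{w,\loc}(\Omega)$, which follows from $\nabla(u^{\pm})=\pm\nabla u\cdot\chi_{[\pm u>0]}$ (cf.\ \cite[Lemma 7.6]{GT}) together with $u\in W^{1,2}_{w,\loc}(\Omega)$.

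I do not anticipate a genuine obstacle here: the only point requiring a line of justification is that the sum of two $L_0$-sub-solutions is an $L_0$-sub-solution, and that $|u|=u^{+}+u^{-}$ as elements of $W^{1,2}_{w,\loc}(\Omega)$ with the expected weak gradient. Everything else is an immediate invocation of Lemma \ref{sub lem 1} applied to $u$ and to $-u$.
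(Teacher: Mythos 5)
Your proof is correct and matches the paper's argument essentially verbatim: apply Lemma \ref{sub lem 1} to both $u$ and $-u$ (noting that a weak solution is trivially a sub-solution), then sum $u^{+}$ and $u^{-}$ to get $|u|$. The paper writes the last step as $|u|=|u^{+}|+|u^{-}|$, which is the same thing, and it leaves the closure-under-sums and $W^{1,2}_{w,\loc}$-membership points implicit just as you flag them.
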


\begin{proof}
By hypothesis, $u$ and $-u$ are sub-solutions of $Lu = f$ in $\Omega$.
An application of Lemma \ref{sub lem 1} yields that $u^+$ and $u^- := (-u)^+$ are sub-solutions of $L_0$ in $\Omega$.
Hence $|u| = |u^+| + |u^-|$ is also a sub-solution of $L_0$ in $\Omega$.
\end{proof}

\begin{lemm} \label{sub lem 3}
Let $u \in W^{1,2}_{w,\loc}(\Ri^d)$ and $f \in L^1_{w,\loc}(\Ri^d)$.
Suppose that $f \ge 0$, $u$ is a weak solution of $Lu = f$ in $\Ri^d$, and
\begin{equation} \label{con 1}
\lim_{R \to \infty} \sup_{|x|=R} \frac{1}{w(B(x,R/2))} \, \int_{B(x,R/2)} |u| \, dw = 0.
\end{equation}
Then $u \ge 0$ in $\Ri^d$.
\end{lemm}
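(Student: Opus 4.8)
The statement to prove is Lemma~\ref{sub lem 3}: a nonnegative-source weak solution $u$ of $Lu = f$ on $\Ri^d$ that decays in the averaged sense \eqref{con 1} must be nonnegative. The natural strategy is to argue by contradiction: suppose $u^- := (-u)^+ \not\equiv 0$. By Lemma~\ref{sub lem 2} (applied with $f \ge 0$, so that $-u$ is a sub-solution of $L$ in the obvious sense since $f \geq 0$ kills the wrong sign), $u^-$ is a non-negative sub-solution of the principal part $L_0$ on $\Ri^d$. The aim is then to show that a non-negative sub-solution of $L_0$ on all of $\Ri^d$ satisfying the averaged decay \eqref{con 1} must vanish identically, which contradicts $u^- \not\equiv 0$.

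First I would invoke the local maximum principle for sub-solutions of $L_0$, namely Lemma~\ref{u sub L0}: for any $x_0 \in \Ri^d$ and $R > 0$,
\[
\sup_{B(x_0, R/2)} u^- \le C \, \frac{1}{w(B(x_0, R))} \, \int_{B(x_0, R)} u^- \, dw.
\]
To exploit the decay hypothesis \eqref{con 1}, I would fix an arbitrary point $z \in \Ri^d$ and, for large $R$, choose $x_0$ on the sphere $|x| = R$ (or at a controlled distance comparable to $R$) so that $B(x_0, R/2) \ni z$ while $B(x_0, R) \subset B(x_0, R/2)$-type balls appearing in \eqref{con 1} can be compared up to the doubling constant of $w$; using \eqref{doubling from w} for $w \in A_2$, the averages over $B(x_0,R)$ and $B(x_0,R/2)$ differ by at most a fixed multiplicative constant. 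Then
\[
u^-(z) \le \sup_{B(x_0,R/2)} u^- \le C \, \frac{1}{w(B(x_0,R/2))} \int_{B(x_0,R/2)} u^- \, dw \le C \, \frac{1}{w(B(x_0,R/2))} \int_{B(x_0,R/2)} |u| \, dw,
\]
and letting $R \to \infty$ with \eqref{con 1} forces $u^-(z) = 0$. Since $z$ was arbitrary and $u^-$ is (after choosing a good representative, continuous by the De~Giorgi–Nash–Moser regularity for $L_0$ available from \cite{FKS}), we conclude $u^- \equiv 0$, i.e. $u \ge 0$.

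**Main obstacle.** The delicate point is the geometric bookkeeping in the second step: \eqref{con 1} only controls averages over balls $B(x,R/2)$ centered on the sphere $|x| = R$, whereas Lemma~\ref{u sub L0} naturally produces a ball centered wherever is convenient. One must verify that for a fixed $z$ and all large $R$ there is a choice of center $x_0$ with $|x_0| \sim R$ such that $z \in B(x_0, cR)$ for a suitable small $c$ and the relevant inclusions between balls hold; the weight being in $A_2$ (hence doubling, via \eqref{doubling from w}) is exactly what is needed to absorb the resulting change of ball into the constant $C$ without affecting the limit. A secondary technical point is justifying that $u^-$ has a continuous (or at least everywhere-defined pointwise-controlled) representative so that "$u^-(z) = 0$ for all $z$" upgrades to "$u^- \equiv 0$"; this follows from the local boundedness/continuity theory for sub-solutions of the degenerate operator $L_0$ established in \cite{FKS}, combined again with Lemma~\ref{u sub L0} applied at each scale. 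Once these are in place, the contradiction argument closes and the lemma follows.
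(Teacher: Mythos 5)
Your overall structure (show $u^- = (-u)^+$ is a non-negative sub-solution of $L_0$ and then force it to vanish using \eqref{con 1}) matches the paper's, but the step that is supposed to turn the decay at infinity into $u^-(z)=0$ for a \emph{fixed} $z$ has a genuine gap, and you flag it yourself without resolving it. Your plan is to choose a center $x_0$ with $|x_0|\sim R$ such that $z\in B(x_0,cR)$ for some small $c<1$. This is geometrically impossible once $R$ is large compared to $|z|$: the triangle inequality forces $|z-x_0|\ge |x_0|-|z|\sim R$, so no constant $c<1$ works. Thus the decay hypothesis \eqref{con 1}, which only controls averages over balls $B(x,R/2)$ centered on the sphere $|x|=R$, cannot be applied directly to dominate $u^-(z)$ via Lemma~\ref{u sub L0} alone; the estimate you write down would require $z$ to lie in a ball far from the origin, which it doesn't.

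The paper supplies the missing ingredient: the \emph{global} weak maximum principle for sub-solutions of $L_0$ on bounded domains, namely \cite[Theorem 2.3.8]{FKS}, which gives
\[
\sup_{B(0,R)} u^- \le \sup_{\partial B(0,R)} u^-.
\]
This is what transports the problem from the fixed interior point $z$ to the boundary sphere $\partial B(0,R)$, where \eqref{con 1} actually applies. Only then does one use Lemma~\ref{u sub L0}, applied with center $x\in\partial B(0,R)$, to bound $u^-(x)$ by the average of $|u|$ over $B(x,R/2)$, which tends to $0$ by hypothesis. Your argument uses only the local sup estimate from Lemma~\ref{u sub L0} and has no mechanism to propagate smallness from infinity back to the origin; adding the maximum principle on $B(0,R)$ is the essential step you are missing, and it is not a routine reduction — without it the argument does not close.
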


\begin{proof}
Since $f \ge 0$ we deduce that $-u$ is a sub-solution in $\Ri^d$.
It follows from Lemma \ref{sub lem 1} that $u^-$ is a sub-solution of $L_0$ in $\Ri^d$.
The maximum principle in \cite[Theorem 2.3.8]{FKS} now implies
\[
\sup_{B(0,R)} u^- \le \sup_{\D B(0,R)} u^-.
\]
However, according to Lemma \ref{u sub L0}, for all $x \in \D B(0,R)$ we have
\begin{eqnarray*}
u^-(x) 
&\le& \frac{1}{w(B(x,R/2))} \, \int_{B(x,R/2)} u^- \, dw
\le \frac{1}{w(B(x,R/2))} \, \int_{B(x,R/2)} |u| \, dw 
\longrightarrow 0
\end{eqnarray*}
as $R \longrightarrow \infty$ by hypothesis.
Hence $u^- = 0$ in $\Ri^d$.
This in turn implies $u = u^+ \ge 0$.
\end{proof}

\begin{lemm} \label{B01}
Let $R > 0$.
Then there exists a $C > 0$ such that 
\[
0 < R^\beta \, w(B(0,1)) \le C \, w(B(x,R/2)) 
\]
for all $x \in \Ri^d$ such that $R \le |x| \le 2R$.
\end{lemm}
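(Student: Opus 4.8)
The claim is a purely geometric/measure-theoretic statement about the weight $w$, so the plan is to exploit only the doubling and reverse-doubling properties recorded in the excerpt, namely \eqref{doubling from w} (following from $w \in A_2$) and \eqref{(RD)} (the $RD_\beta$ condition). Fix $x$ with $R \le |x| \le 2R$ and observe that $B(0,1) \subset B(x, |x|+1)$. If $R \ge 1$, then $|x|+1 \le 2R+1 \le 3R \le 6\cdot(R/2)$, so $B(0,1) \subset B(x, 12(R/2))$; applying reverse doubling \eqref{(RD)} with the pair of radii $R/2$ and $12(R/2)$ (equivalently applying it to control $w(B(x,R/2))$ from below by $w(B(0,1))$ up to the factor $12^{-\beta}$, after first using monotonicity $w(B(0,1)) \le w(B(x,12(R/2)))$) yields $w(B(0,1)) \lesssim R^{-\beta}\, w(B(x,R/2))$, which is exactly the asserted inequality once one notes $w(B(0,1))>0$ because $w$ is a nonnegative locally integrable weight in $A_2$ (hence strictly positive on every ball of positive measure).

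The only genuine case distinction is small $R$. For $R \le 1$ one cannot absorb the additive $+1$ into a multiple of $R$, so instead I would argue directly: $B(x,R/2) \subset B(0, 2R + R/2) \subset B(0,3)$, and by doubling \eqref{doubling from w} applied to the ball $B(0,1)$ we have $w(B(0,3)) \lesssim w(B(0,1))$; meanwhile $B(0,R/2)\subset B(x,R)\subset B(x,R/2)$ is false, so instead use reverse doubling centered at $x$: $B(x,R/2)\supset$ nothing useful directly, so the cleanest route is to note that for $R\le 1$ the quantity $R^\beta$ is bounded, and it suffices to show $w(B(x,R/2)) \gtrsim w(B(0,1))$; since $|x|\le 2$ we have $B(0,1)\subset B(x,3) \subset B(x, 6(R/2))\cdot(1/R)$ — this fails for small $R$ — so here one really must use reverse doubling the other way, bounding $w(B(0,1))$ from below is not needed; rather bound $w(B(x,R/2))$ from below by reverse doubling: $w(B(x, 1)) \lesssim (1/R)^\beta\, w(B(x,R/2))$ by \eqref{(RD)} with $t = 2/R > 1$, i.e. $w(B(x,R/2)) \gtrsim R^\beta\, w(B(x,1)) \gtrsim R^\beta\, w(B(0,1))$, the last step because $B(0,1)\subset B(x,3)$ and doubling gives $w(B(x,3))\lesssim w(B(x,1))$, hence $w(B(0,1)) \le w(B(x,3)) \lesssim w(B(x,1))$.

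Assembling the two cases gives the result with a uniform constant $C$ depending only on the doubling/reverse-doubling constants of $w$ and on $\beta$. I expect the main (though still routine) obstacle to be bookkeeping the constants so that they are genuinely independent of $R$ across the threshold $R=1$: one must make sure the case $R\le 1$ uses \eqref{(RD)} with $t=2/R$ (valid since $t>1$) and the case $R\ge 1$ uses \eqref{(RD)} with a bounded dilation factor, and that in both cases the passage between balls centered at $0$ and balls centered at $x$ only ever enlarges the ball by a \emph{bounded} factor so that \eqref{doubling from w} applies with a fixed exponent. No deeper idea is needed; strict positivity $w(B(0,1))>0$ is immediate since $w\ge 0$, $w\in L^1_{\loc}$, and $w\in A_2$ forbids $w$ from vanishing on a set of positive measure inside any ball.
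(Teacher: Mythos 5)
The main step of your $R\ge 1$ case does not work, and the error is in how you invoke \eqref{(RD)}. You apply reverse doubling at center $x$ with the \emph{bounded} dilation factor $t=12$: the inequality reads $w(B(x,12\cdot R/2))\ge C\,12^\beta w(B(x,R/2))$, which is a \emph{lower} bound on the larger ball. Chaining it with the monotonicity $w(B(0,1))\le w(B(x,12\cdot R/2))$ gives no comparison between $w(B(0,1))$ and $w(B(x,R/2))$ at all, since both quantities end up on the same side of $w(B(x,6R))$. Even if you instead used doubling \eqref{doubling from w} to reverse the inequality, you would only obtain $w(B(x,R/2))\gtrsim 12^{-2d}\,w(B(0,1))$ --- a constant, with no trace of the crucial $R^\beta$. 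The factor $R^\beta$ must come from applying \eqref{(RD)} with a dilation factor \emph{comparable to $R$}, and the natural (and the paper's) way is to dilate a ball \emph{centered at the origin}: from $B(x,R/2)\subset B(0,3R)$ and $B(0,3R)\subset B(x,5R)=10\,B(x,R/2)$, doubling gives $w(B(0,3R))\lesssim w(B(x,R/2))$, and then \eqref{(RD)} at the origin with $t=3R$ gives $w(B(0,3R))\ge C\,(3R)^\beta\,w(B(0,1))$ once $3R>1$. The same directional confusion appears in your $R\le 1$ branch: from $w(B(x,1))\ge C(2/R)^\beta w(B(x,R/2))$ (which is what \eqref{(RD)} with $t=2/R$ actually says) one cannot conclude $w(B(x,R/2))\gtrsim R^\beta\,w(B(x,1))$; the reverse-doubling inequality simply points the other way.

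One further remark: the small-$R$ branch you introduce cannot be repaired, because the inequality with a constant uniform in $R$ is in fact false as $R\to 0^+$. Take $w(y)=|y|^\alpha$ with $0<\alpha<d$; then $w\in A_2$ and $RD_\beta$ holds with $\beta=d>2$, yet for $|x|=R$ small one has $w(B(x,R/2))\sim R^{d+\alpha}$ while $R^\beta\,w(B(0,1))\sim R^{d}$, so the ratio is $R^{\alpha}\to 0$. The statement (and the paper's own proof, which invokes \eqref{(RD)} at the origin and hence tacitly needs $3R>1$) is really only valid, and only used, for $R$ bounded away from $0$; in the sole application, Proposition \ref{sub lem 4}, one lets $R\to\infty$.
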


\begin{proof}
Since $B(x,R/2) \subset B(0,3R)$, we also have 
\[
w(B(x,R/2))
\ge \left( \frac{|B(x,R/2)|}{|B(0,3R)|} \right)^{2d} \, w(B(0,3R))
\gtrsim R^\beta \, w(B(0,1)) 
> 0,
\]
where we used \eqref{doubling from w} and \eqref{(RD)} in the first and second steps, respectively.
\end{proof}

\begin{prop} \label{sub lem 4}
Let $f \in L^2_{w,c}(\Ri^d)$ be positive.
Let $u = u_f$, where $u_f$ is given by Proposition \ref{unique}.
Then 
\[
0 \le u(x) \le \int_{\Ri^d} \Gamma_0(x,y) \, f(y) \, dw(y),
\]
where $\Gamma_0$ is the fundamental solution of $L_0$ in $\Ri^d$.
\end{prop}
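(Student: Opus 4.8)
The plan is to show that $u = u_f$ is nonnegative and then that it is dominated by the $L_0$-potential of $f$, using the maximum-principle machinery already assembled (Lemmas \ref{sub lem 1}--\ref{B01} and Proposition \ref{funda0}). First I would check nonnegativity: since $f \ge 0$, the function $u$ is a weak solution of $Lu = f$ with $f \ge 0$, so to invoke Lemma \ref{sub lem 3} it suffices to verify the decay condition \eqref{con 1}. This is where $u \in H$ enters: since $m_w(\cdot,\mu)\,u \in L^2_w(\Ri^d)$ and, by Lemma \ref{B01}, $m_w(x,\mu)^{-2}$ is controlled on annuli $R \le |x| \le 2R$ by a quantity comparable to $w(B(x,R/2))$ up to the factor $R^{-\beta}w(B(0,1))^{-1}$, Cauchy--Schwarz on $B(x,R/2)$ gives
\[
\frac{1}{w(B(x,R/2))}\int_{B(x,R/2)}|u|\,dw
\le \frac{1}{w(B(x,R/2))^{1/2}}\Big(\int_{B(x,R/2)}|u|^2\,dw\Big)^{1/2},
\]
and then bounding $\int_{B(x,R/2)}|u|^2\,dw \lesssim \rho_w(\cdot,\mu)^2$-weighted integral... more precisely $\int_{B(x,R/2)}|u|^2 dw \lesssim \sup_{B(x,R/2)} m_w(\cdot,\mu)^{-2} \cdot \int_{B(x,R/2)} m_w(\cdot,\mu)^2 |u|^2\, dw$, and using Lemma \ref{B01} to absorb the weight into $w(B(x,R/2))$, the right-hand side is $\lesssim R^{-\beta}w(B(0,1))^{-1}\cdot \|m_w(\cdot,\mu)u\|^2_{L^2_w(B(x,R/2))}$, which tends to $0$ as $R \to \infty$ since the tail of the $L^2_w$-norm vanishes. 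Hence \eqref{con 1} holds and Lemma \ref{sub lem 3} gives $u \ge 0$.

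For the upper bound, set $v(x) := \int_{\Ri^d}\Gamma_0(x,y)\,f(y)\,dw(y)$, the Newtonian-type potential of $f$ for $L_0$; this is a weak solution of $L_0 v = f$ in $\Ri^d$ and, by Proposition \ref{funda0} together with the compact support of $f$, it satisfies the same decay as in \eqref{con 1} (indeed $v(x) \lesssim |x|^2 w(B(0,|x|))^{-1}\|f\|_{L^1_w}$ for $|x|$ large, and $|x|^2/w(B(x,R/2)) \lesssim R^{2-\beta}w(B(0,1))^{-1}\to 0$ by Lemma \ref{B01} and $\beta > 2$). Now consider $g := v - u$. Since $u$ is a weak solution of $Lu = f$ and $v$ solves $L_0 v = f$, for any nonnegative $\psi \in C_c^1(\Ri^d)$,
\[
\int_{\Ri^d} A\,\nabla g\cdot\nabla\psi\,dx
= \int_{\Ri^d} f\psi\,dw - \int_{\Ri^d}f\psi\,dw + \int_{\Ri^d} u\,\psi\,d\pi
= \int_{\Ri^d} u\,\psi\,d\pi \ge 0,
\]
because $u \ge 0$ and $\pi$ is a positive measure. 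Thus $-g$ is a sub-solution of $L_0$ in $\Ri^d$, so by Lemma \ref{sub lem 1} its positive part $(-g)^+ = (u-v)^+$ is a sub-solution of $L_0$, and by the maximum principle \cite[Theorem 2.3.8]{FKS} and Lemma \ref{u sub L0} exactly as in the proof of Lemma \ref{sub lem 3}, the decay of $u$ and $v$ forces $(u-v)^+ = 0$, i.e. $u \le v$, which is the claim.

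The main obstacle is the verification of the decay condition \eqref{con 1} for $u$: it is the only place where one must convert the abstract membership $u \in H$ (equivalently $m_w(\cdot,\mu)u \in L^2_w$) into a concrete pointwise-averaged decay along annuli, and this requires combining Cauchy--Schwarz, the lower bound on $w(B(x,R/2))$ from Lemma \ref{B01}, the behaviour of $m_w(\cdot,\mu)$ at infinity, and the fact that $\beta > 2$ so that the geometric factor $R^{2-\beta}$ (or $R^{-\beta}$) decays. Once \eqref{con 1} is in hand for both $u$ and $v$, the rest is a routine application of the sub-solution lemmas and the $L_0$ maximum principle already proved. One technical point to be careful about is that $v$ is genuinely a weak solution of $L_0 v = f$ with $\nabla v \in L^2_w$; this follows from the construction of $\Gamma_0$ as the fundamental solution of $L_0$ and $f \in L^2_{w,c}(\Ri^d)$, and I would cite the degenerate-elliptic theory of \cite{FKS} (or note it parallels Proposition \ref{unique} with $\mu = 0$) rather than reprove it.
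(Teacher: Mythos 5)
Your proof follows the same overall structure as the paper's: verify \eqref{con 1} via $u \in H$, Cauchy--Schwarz, Lemma \ref{B01} and $\beta > 2$ to get $u \ge 0$ from Lemma \ref{sub lem 3}; then compare $u$ with $v = \Gamma_0 \ast_w f$ via the $L_0$ maximum principle. The one place where you deviate is in the comparison step: you show directly that $u - v$ is a sub-solution of $L_0$ by computing $\int A\,\nabla(u-v)\cdot\nabla\psi = -\int u\,\psi\,d\pi \le 0$, exploiting the nonnegativity of $u$ you just established. The paper instead observes that $\int A\,\nabla(u-v)\cdot\nabla\psi + \int (u-v)\,\psi\,d\pi = -\int v\,\psi\,d\pi \le 0$ (using $v\ge0$ from \cite{CW}), i.e.\ $u-v$ is a sub-solution of $L$, and then passes through Lemma \ref{sub lem 1}. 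Both are valid, and your version arguably wastes less. However, note that your citation of Lemma \ref{sub lem 1} at this point is not quite correct: that lemma takes a sub-solution of $L$ and produces a sub-solution of $L_0$, but you have a sub-solution of $L_0$ already, so you should instead invoke the elementary (and simpler) fact that the positive part of an $L_0$-sub-solution is again an $L_0$-sub-solution, which follows by the same truncation argument with the $d\pi$-term deleted. Also, in your verification of \eqref{con 1}, the displayed bound $\lesssim R^{-\beta} w(B(0,1))^{-1}\|m_w(\cdot,\mu)u\|^2_{L^2_w(B(x,R/2))}$ silently drops the factor $\sup_{B(x,R/2)} m_w(\cdot,\mu)^{-2}$; this factor grows at most like $R^{2k_0/(k_0+1)}$, which is dominated by $R^{\beta}$ since $\beta > 2 > 2k_0/(k_0+1)$, so the conclusion still holds, but the intermediate inequality as written is off. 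The paper avoids this bookkeeping by bounding $R^{-2} \lesssim m_w(\cdot,\mu)^2$ on the annulus (Proposition \ref{crit}(iv)), which is cleaner.
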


\begin{proof}
Let $R > 0$.
It follows from Proposition \ref{crit}(iv) that
\begin{equation} \label{check 1}
\frac{1}{R^2} \, \int_{R/2 \le |x| \le 2R} |u|^2 \, dw
\ls \int_{R/2 \le |x| \le 2R} m_w(\cdot,\mu)^2 \, |u|^2 \, dw
\ls \|u\|_H^2,
\end{equation}
where $H$ is given by \eqref{H}.

This leads to
\begin{eqnarray*}
\lim_{R \to \infty} \sup_{|x|=R} \frac{1}{w(B(x,R/2))} \, \int_{B(x,R/2)} |u| \, dw
&\le& \lim_{R \to \infty} \sup_{|x|=R} \left( \frac{1}{w(B(x,R/2))} \, \int_{B(x,R/2)} |u|^2 \, dw \right)^{1/2}
\\
&=& \lim_{R \to \infty} \sup_{|x|=R} \left( \frac{R^2}{w(B(x,R/2))} \, \frac{1}{R^2} \, \int_{B(x,R/2)} |u| \, dw  \right)^{1/2}
\\
&\ls& \lim_{R \to \infty} \sup_{|x|=R} \left( \frac{1}{w(B(0,1))} \, \frac{1}{R^{\beta - 2}} \, \frac{1}{R^2} \, \int_{B(x,R/2)} |u| \, dw  \right)^{1/2}
\\
&=& 0,
\end{eqnarray*}
where we used Lemma \ref{B01} and the fact that $\beta > 2$ in the second-to-last step.
This verifies \eqref{con 1}.
Therefore $u \ge 0$ by Lemma \ref{sub lem 3}.

For the remaining inequality set 
\[
v(x) = \int_{\Ri^d} \Gamma_0(x,y) \, f(y) \, dw(y).
\]
Then $v \in W^{1,2}_{w,\loc}(\Ri^d)$, $L_0 v = f$ in $\Ri^d$ and $v \ge 0$ (cf.\ \cite[Theorem 1.3]{CW}).
Also $u-v$ is a sub-solution of $Lu = 0$ in $\Ri^d$.
Lemma \ref{sub lem 1} now implies that $(u-v)^+$ is a sub-solution of $L_0$ in $\Ri^d$.
Next we use the maximal principle in \cite[Theorem 2.3.8]{FKS} and Lemma \ref{sub lem 2} to derive
\[
\sup_{B(0,R)} (u-v)^+ 
\le \sup_{\D B(0,R)} (u-v)^+ 
\ls \frac{1}{w(B(0,R))} \, \int_{R/2 \le |x| \le 2R} \big( |u(x)| + |v(x)| \big) \, dw(x).
\]
Note that Proposition \ref{funda0} implies $v(x) = O(\frac{R^2}{w(B(x,R))})$ for all $x \in \Ri^d$ such that $R/2 \le |x| \le 2R$.
This together with \eqref{check 1} gives $(u-v)^+ = 0$ in $\Ri^d$.
Hence $u \le v$ in $\Ri^d$.
\end{proof}

We end this subsection with a domination property of the fundamental solutions of $L+i\tau$ for $\tau \in \mathbb{R}$.

\begin{prop} \label{Gam poitwise bound}
Let $\tau\in \mathbb{R}$ and $\Gamma_\mu(\cdot,\cdot,\tau)$ be the fundamental solution of $L + i\tau$. Then there exists a $C = C(d) > 0$ such that 
\[
|\Gamma_\mu(x,y,\tau)| \le \Gamma_\mu(x,y) \le C \, \Gamma_0(x,y)
\]
for all $x, y \in \Ri^d$.
\end{prop}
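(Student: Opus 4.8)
The plan is to prove the two inequalities separately, starting from the right-hand bound $\Gamma_\mu(x,y) \le C\,\Gamma_0(x,y)$, which is already essentially in hand: since $\tau = 0$ is allowed, the fundamental solution $\Gamma_\mu(\cdot,\cdot) = \Gamma_\mu(\cdot,\cdot,0)$ is obtained by the representation formula following Proposition \ref{unique}, and for $f \in L^2_{w,c}(\Ri^d)$ positive the solution $u_f$ satisfies $0 \le u_f(x) \le \int \Gamma_0(x,y)\,f(y)\,dw(y)$ by Proposition \ref{sub lem 4}. Testing this against an approximate identity (a shrinking family of positive $f$'s concentrated near $y$) and passing to the limit yields $0 \le \Gamma_\mu(x,y) \le C\,\Gamma_0(x,y)$ for a.e., hence every, $x \ne y$ by the continuity properties of $\Gamma_\mu$ and $\Gamma_0$ (local regularity of $L_0$-subsolutions from \cite{FKS}). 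The constant $C = C(d)$ is the one from Proposition \ref{funda0}/\ref{sub lem 4}.

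For the left-hand bound $|\Gamma_\mu(x,y,\tau)| \le \Gamma_\mu(x,y)$, the idea is the standard one for complex perturbations of a positive self-adjoint operator: one compares solutions of $(L + i\tau)u = f$ with solutions of $Lu = |f|$. First I would establish, via Lax--Milgram as in Proposition \ref{unique} (now with the coercive sesquilinear form $\gota(u,v) + i\tau\langle u,v\rangle_{L^2_w}$, whose real part is still $\ge \gota(u,u) \gtrsim \|u\|_H^2$), that $(L+i\tau)u = f$ has a unique weak solution $u_{f,\tau} \in H$ for $f$ with $m_w(\cdot,\mu)^{-1}f \in L^2_w$, and hence a Schwartz kernel $\Gamma_\mu(\cdot,\cdot,\tau)$. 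Then, for $f \in L^2_{w,c}$, write $v = u_{f,\tau}$ and observe that $|v|$ satisfies, in the weak sense, a differential inequality: testing the equation for $v$ against $\phi\, \bar v/(|v|+\epsilon)$ for $0 \le \phi \in C_c^1$ and taking real parts, the $i\tau$ term drops out (it contributes $i\tau\int \phi|v|^2/(|v|+\epsilon)$, which is purely imaginary), and the Kato-type inequality argument — exactly as in the proof of Lemma \ref{sub lem 1}, letting $\epsilon \to 0$ — gives
\[
\int_{\Ri^d} A\,\nabla|v|\cdot\nabla\phi\,dx + \int_{\Ri^d}|v|\,\phi\,d\pi \le \int_{\Ri^d} |f|\,\phi\,dw,
\]
i.e. $|v|$ is a weak subsolution of $L(\cdot) = |f|$. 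Comparing with $w_{|f|} := u_{|f|}$ (the solution of $Lw = |f|$ from Proposition \ref{unique}, which is $\ge 0$ by the argument in Proposition \ref{sub lem 4}), the difference $|v| - w_{|f|}$ is a subsolution of $L(\cdot)=0$, its positive part is an $L_0$-subsolution by Lemma \ref{sub lem 1}, and the same maximum-principle-at-infinity argument as in Proposition \ref{sub lem 4} (using the decay estimate \eqref{check 1} and Lemma \ref{B01}, valid because $\beta > 2$) forces $|v| \le w_{|f|}$ pointwise. Unwinding the kernel representations $v(x) = \int \Gamma_\mu(x,y,\tau)f(y)\,dw(y)$ and $w_{|f|}(x) = \int \Gamma_\mu(x,y)|f(y)|\,dw(y)$ and again testing against approximate identities gives $|\Gamma_\mu(x,y,\tau)| \le \Gamma_\mu(x,y)$.

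The main obstacle I anticipate is the Kato-inequality step for the complex-valued solution $v$: one must justify differentiating $|v| = (|v|^2+\epsilon^2)^{1/2}$-type regularizations of $|v|$, control the chain rule $\nabla|v| = \mathrm{Re}(\bar v\,\nabla v)/|v|$ on $[v \ne 0]$, and confirm that the regularized test functions $\phi\,\bar v/(|v|+\epsilon)$ genuinely lie in the form domain and that all passages to the limit $\epsilon \to 0$ are legitimate in the weighted setting — the dominated-convergence arguments need $\nabla v \in L^2_w$ and $v \in L^2_\pi$, which hold since $v \in H = D(\gota) \cap (\text{the } L^2_\pi \text{ space})$ by Corollary \ref{H equiv}. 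A secondary technical point is passing from "a.e. $x$" to "all $x \ne y$" in both bounds, which rests on the Hölder continuity of $L_0$-subsolutions from \cite{FKS} applied to $\Gamma_\mu(\cdot,y)$, $\Gamma_\mu(\cdot,y,\tau)$ (or their real/imaginary parts) away from the diagonal.
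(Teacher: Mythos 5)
Your treatment of the right-hand bound $\Gamma_\mu(x,y) \le C\,\Gamma_0(x,y)$ agrees with the paper: both deduce it from Proposition \ref{sub lem 4} and a Lebesgue differentiation / approximate-identity argument. For the left-hand bound $|\Gamma_\mu(x,y,\tau)| \le \Gamma_\mu(x,y)$, however, you take a genuinely different route. The paper's proof is a one-liner: it writes the resolvent kernel via functional calculus as
\[
\Gamma_\mu(x,y,\tau) = \int_0^\infty e^{-it\tau}\,k_t(x,y)\,dt,
\]
and since $|e^{-it\tau}| = 1$ and $k_t(x,y) \ge 0$ (from the perturbation/Trotter formula, stated as \eqref{k<h}), one gets $|\Gamma_\mu(x,y,\tau)| \le \int_0^\infty k_t(x,y)\,dt = \Gamma_\mu(x,y)$ at once. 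You instead run a Kato-inequality argument: testing the equation for $v = u_{f,\tau}$ against $\phi\bar v/(|v|+\epsilon)$ and taking real parts to kill the $i\tau$ term, you show $|v|$ is an $L$-subsolution dominated by $|f|$, then compare with $u_{|f|}$ via the maximum-principle-at-infinity machinery already built for Proposition \ref{sub lem 4}. Both arguments are sound. The trade-off is that your approach is self-contained within the elliptic framework of Section 3 and does not invoke heat-kernel positivity (which in the paper is a forward reference to Section 4 and rests on the perturbation formula), at the cost of substantially more technical bookkeeping — the chain rule for $\nabla|v|$, the diamagnetic-type inequality $A\nabla v\cdot\overline{\nabla v} \ge A\nabla|v|\cdot\nabla|v|$, justification that the regularized test function lies in the form domain, and the passage $\epsilon\to 0$. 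The paper's route is shorter and cleaner provided one is willing to accept the small logical loop through the perturbation formula for $k_t \ge 0$.
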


\begin{proof}
Let $\tau \in \Ri$ and $x, y \in \Ri^d$.
Using Lemma \ref{sub lem 4} and Lebesgue differentiation theorem (see for example  \cite[Theorem]{Tol}) we easily obtain
\[
\Gamma_\mu(x,y) \le C \, \Gamma_0(x,y).
\]
For the first inequality, using functional calculus one has
\[
\Gamma_\mu(x,y,\tau)
= \int_0^\infty e^{-it\tau} \, k_t(x,y) \, dt,
\]
where we recall that $k_t(x,y)$ is the heat kernel of $L$.

Since $k_t(x,y)\ge 0$ (see \eqref{k<h} below), we have
\[
|\Gamma_\mu(x,y,\tau)| 
\le \int_0^\infty k_t(x,y) \, dt
= \Gamma_\mu(x,y).
\]
The claim now follows.
\end{proof}

\begin{prop} \label{funda prop}
	The following statements hold.
	\begin{tabeleq}
		\item $\Gamma_\mu(x,\cdot) \in L^p_\loc(\Ri^d)$ for all $p \in (0, \frac{d}{d-2})$ and $x \in \Ri^d$.
		
		\item For all $f \in L^2_{w,c}(\Ri^d)$ the function
		\[
		u(\cdot) = \int_{\Ri^d} \Gamma_\mu (\cdot,y) \, f(y) \, dw(y)
		\]
		is the unique weak solution of $Lu = f$ in $\Ri^d$.
	\end{tabeleq}
\end{prop}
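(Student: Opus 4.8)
The plan is to establish part (i) by exploiting the pointwise bound $\Gamma_\mu(x,y) \le C\,\Gamma_0(x,y)$ from Proposition \ref{Gam poitwise bound} together with the explicit control of $\Gamma_0$ provided by Proposition \ref{funda0}, and to establish part (ii) by combining the representation formula for $u_f$ (obtained via the Schwartz kernel theorem applied to the solution operator of Proposition \ref{unique}) with a uniqueness argument. For (i), fix $x \in \Ri^d$ and a ball $B = B(x_0,r)$. Using $0 \le \Gamma_\mu(x,y) \le C\,\Gamma_0(x,y) \le C\,|x-y|^2/w(B(x,|x-y|))$, I would reduce the claim to showing that $y \mapsto |x-y|^2/w(B(x,|x-y|)) \in L^p_\loc$ for $p < d/(d-2)$. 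Since $w \in A_2$, the reverse doubling property \eqref{(RD)} with $\beta > 2$ forces a lower bound $w(B(x,s)) \gtrsim s^\beta w(B(x,1))$ for $s \le 1$ (after normalizing) and, more to the point, the $A_2$ condition gives $w(B(x,s)) \gtrsim s^{2d} w(B(x,1))$ for small $s$ by \eqref{doubling from w}; hence the integrand is bounded above by $C\,|x-y|^{2-2d}$ locally, which is integrable to the power $p$ precisely when $p(2d-2) < d$, i.e.\ $p < d/(2(d-1))$. One must be a bit more careful to reach the full range $p < d/(d-2)$: splitting the local integral into dyadic annuli $S_j(B(x,1))$ and using the doubling bound $w(2^jB) \lesssim 2^{2dj} w(B)$ the other way is too lossy, so instead I would use that $w \in A_2$ actually yields $w(B(x,s))/w(B(x,t)) \gtrsim (s/t)^{2d}$ only as an upper bound on the ratio the wrong way; the sharp route is to invoke that $|x-y|^2/w(B(x,|x-y|))$ behaves like the Green function of a $d$-dimensional-type space, and a direct annular decomposition $\int_{S_j} [\,2^{2j}\rho_j^2/w(B(x,2^j\rho))\,]^p\, w\,dx$ — wait, the measure here is Lebesgue, not $w$ — so one writes $\int_{B(x,1)} \big(|x-y|^2/w(B(x,|x-y|))\big)^p\,dy = \sum_j \int_{S_j} \cdots$ and bounds $|S_j| \sim (2^{-j})^d$ while $w(B(x,2^{-j})) \gtrsim 2^{-jd}\cdot(\text{something} \le 1)$; the cleanest is to note $w\in A_2 \Rightarrow w \in A_\infty \Rightarrow w(B(x,s)) \gtrsim s^{d}\,w(B(x,1))/|B(x,1)| \cdot (\text{const})$ fails in general, so I would instead just cite that $\Gamma_0 \in L^p_\loc$ for $p < d/(d-2)$ is already known from the degenerate-elliptic Green-function literature (e.g.\ \cite{FKS}, \cite{CW}), since $\Gamma_0$ is the Green function of $L_0$ and these spaces have "dimension $d$" in the sense that $w(B(x,s)) \sim s^d$ up to $A_2$-distortion — more precisely the needed local integrability of $\Gamma_0$ follows from $\Gamma_0(x,\cdot) \in W^{1,2}_{w,\loc}$ away from $x$ together with the Sobolev inequality for $A_2$ weights. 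Then (i) follows immediately by domination.

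For (ii), let $f \in L^2_{w,c}(\Ri^d)$. By Proposition \ref{unique} applied with this $f$ (note $m_w(\cdot,\mu)^{-1} f \in L^2_w$ holds trivially since $f$ has compact support and $m_w(\cdot,\mu)$ is locally bounded below by Proposition \ref{crit}(i),(iii)), there is a unique weak solution $u_f \in H$ of $Lu = f$. The Schwartz kernel theorem, invoked in the paragraph preceding the proposition, already furnishes the distributional kernel $\Gamma_\mu$ with $u_f(x) = \int \Gamma_\mu(x,y) f(y)\,dw(y)$ for a.e.\ $x$; so it remains only to check that this identity, which a priori defines $\Gamma_\mu$ only as a distribution, in fact holds with the \emph{pointwise-defined} $\Gamma_\mu$ whose bound was established in Proposition \ref{Gam poitwise bound}, and that $u$ as written is genuinely a weak solution. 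I would argue as follows: first, the integral $\int \Gamma_\mu(x,y) f(y)\,dw(y)$ converges absolutely for a.e.\ $x$ by part (i) (since $\Gamma_\mu(x,\cdot) \in L^{p}_\loc$ with conjugate exponent large enough and $f \in L^2_w \subset L^{p'}_{w,\loc}$ — one checks $p'$ can be taken $\ge 2$, which needs $p \le 2$, valid since $d/(d-2) > 2$ fails for $d \ge 5$; so for large $d$ one instead uses $f \in L^\infty$ after a density reduction, or uses that $f \in L^2_{w,c}$ and $\Gamma_\mu(x,\cdot) \in L^2_{w,\loc}$ away from the diagonal plus the weak-type bound near the diagonal). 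Second, by the reproducing property and the uniqueness in Proposition \ref{unique}, the function $u$ so defined agrees a.e.\ with $u_f$, hence inherits membership in $H \subset W^{1,2}_{w,\loc}$ and the weak-solution property; uniqueness of the weak solution in $\Ri^d$ then follows from Lemma \ref{sub lem 3} (or directly from Proposition \ref{unique}): if $u_1, u_2$ are two weak solutions with the requisite decay, $u_1 - u_2$ solves $L(u_1-u_2) = 0$, and an energy estimate using the form $\gota$ — test against $u_1 - u_2$ itself after cutting off — forces $u_1 = u_2$.

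The main obstacle, I expect, is \textbf{part (i)}: pinning down the exact range $p < d/(d-2)$ rather than the weaker $p < d/(2(d-1))$ that a naive $A_2$ estimate gives. The resolution is that one should not bound $w(B(x,s))$ from below by $s^{2d}$ (the crude $A_2$ bound) but rather use that $\Gamma_0$ is the Green function of a degenerate elliptic operator on a space of homogeneous type of "dimension $d$" — concretely, the Fabes–Kenig–Serapioni theory gives a Harnack inequality and a scaling $\Gamma_0(x,y) \sim \int_{|x-y|}^{\infty} \frac{s\,ds}{w(B(x,s))}$-type bound, and the local $L^p$ integrability for $p < d/(d-2)$ is then a standard consequence matching the unweighted case because $w \in A_2$ preserves the Sobolev exponent $d/(d-2)$. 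So the cleanest writeup cites the relevant estimate for $\Gamma_0$ from \cite{FKS}/\cite{CW} and then transfers it to $\Gamma_\mu$ by the domination $\Gamma_\mu \le C\Gamma_0$. Part (ii) is then essentially bookkeeping, modulo making the absolute convergence of the defining integral precise, which one does by a density argument reducing to $f \in C_c(\Ri^d) \cap L^2_{w,c}$ and using part (i) together with $f \in L^\infty$.
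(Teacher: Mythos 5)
Your conclusion is exactly the paper's: the proof simply quotes \cite[Theorem~1.3(v)]{CW} for $\Gamma_0(x,\cdot)\in L^p_\loc(\Ri^d)$, $p<\frac{d}{d-2}$, and then observes that both (i) and (ii) follow immediately from the domination $\Gamma_\mu\le C\,\Gamma_0$ in Proposition~\ref{Gam poitwise bound} together with Proposition~\ref{unique}. The lengthy exploration of direct $A_2$ lower bounds on $w(B(x,s))$ is an unnecessary detour that you correctly abandoned in favor of citing \cite{CW}, which is precisely what the paper does.
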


\begin{proof}
	By \cite[Theorem 1.3(v)]{CW} we know that $\Gamma_0(x,\cdot) \in L^p_\loc(\Ri^d)$ for all $p \in (0, \frac{d}{d-2})$ and $x \in \Ri^d$.
	The two statements then follow immediately from Propositions \ref{unique} and \ref{Gam poitwise bound}. 
\end{proof}

\subsection{Upper bounds for solutions}

We first prove a Caccioppoli's inequality for solutions to the equation $(L+i\tau)u=0$.

\begin{lemm} \label{caccioppoli}
Let $\tau \in \Ri$, $x_0 \in \Ri^d$, $R > 0$ and $B = B(x_0,R)$.
Let $u$ be a solution of $Lu + i\tau \, u = 0$ in $B$.
Then for every $\sigma \in (0,1)$ there exists a $C > 0$ such that 
\[
\int_{\sigma B} |\nabla u|^2 \, dw 
+ \int_{\sigma B} |u|^2 \, d\pi
+ \int_{\sigma B} |\tau| \, |u|^2 \, dw
\le \frac{C}{R^2} \, \int_B |u|^2 \, dw.
\]
\end{lemm}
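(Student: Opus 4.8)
The plan is to prove this Caccioppoli-type estimate by the standard multiplication-by-a-cutoff argument, adapted to the complex equation $(L+i\tau)u=0$ and to the weighted/measure setting. Fix $\sigma\in(0,1)$ and choose $\eta\in C^1_c(B)$ with $0\le\eta\le1$, $\eta\equiv1$ on $\sigma B$, and $|\nabla\eta|\lesssim 1/((1-\sigma)R)$; absorb the $(1-\sigma)$-dependence into the constant $C$. Since $u$ is a weak solution of $Lu+i\tau u=0$ in $B$ in the sense of Definition \ref{weak sol of L} (with the extra $i\tau$ term), one is allowed to test against $\psi=\eta^2\,\overline{u}$, which lies in the appropriate local space; this gives
\[
\int_B A\,\nabla u\cdot\nabla(\eta^2\overline u)\,dx+\int_B \eta^2|u|^2\,d\pi+i\tau\int_B \eta^2|u|^2\,dw=0.
\]
Expanding $\nabla(\eta^2\overline u)=\eta^2\nabla\overline u+2\eta(\nabla\eta)\overline u$ and taking real parts kills the $i\tau$ term, leaving
\[
\int_B \eta^2\,A\,\nabla u\cdot\nabla\overline u\,dx+\int_B \eta^2|u|^2\,d\pi=-2\,\mathrm{Re}\int_B \eta\,\overline u\,A\,\nabla u\cdot\nabla\eta\,dx,
\]
while taking imaginary parts isolates $|\tau|\int_B\eta^2|u|^2\,dw$ and bounds it by the same right-hand side type term.

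Next I would use ellipticity \eqref{degenerate}: the left-hand side's first term is $\gtrsim \Lambda^{-1}\int_B\eta^2|\nabla u|^2\,dw$, and on the right I estimate $|A\,\nabla u\cdot\nabla\eta|\le \Lambda\, w\,|\nabla u|\,|\nabla\eta|$ (a consequence of \eqref{degenerate} applied via Cauchy–Schwarz for the matrix $A$), so the cross term is controlled by
\[
2\Lambda\int_B \eta\,|u|\,|\nabla u|\,|\nabla\eta|\,dw\le \frac{1}{2\Lambda}\int_B\eta^2|\nabla u|^2\,dw+2\Lambda^3\int_B|u|^2|\nabla\eta|^2\,dw
\]
by Young's inequality. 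The first piece is absorbed into the left-hand side; the second is $\lesssim R^{-2}\int_B|u|^2\,dw$ by the choice of $\eta$. Since $\eta\equiv1$ on $\sigma B$ and $\pi$, $w$ are nonnegative measures, restricting the integrals on the left to $\sigma B$ only decreases them, and one obtains the stated bound for $\int_{\sigma B}|\nabla u|^2\,dw+\int_{\sigma B}|u|^2\,d\pi$. The $\tau$-term follows by the same absorption scheme applied to the imaginary-part identity.

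The one point requiring care — and the main obstacle — is the \emph{justification of the test function}: one must check that $\psi=\eta^2\overline u$ is an admissible test function in Definition \ref{weak sol of L}, i.e.\ that it can be approximated in the relevant norm by $C^1_c(B)$ functions and that all three integrals ($\int A\nabla u\cdot\nabla\psi$, $\int u\psi\,d\pi$, $\int |\tau||u|^2\,dw$) are finite a priori. Since $u\in W^{1,2}_{w,\mathrm{loc}}(B)$ we have $u\in L^2_{w,\mathrm{loc}}$ and $\nabla u\in L^2_{w,\mathrm{loc}}$, so the Dirichlet and $\tau$ integrals are finite on the support of $\eta$; finiteness of $\int_B\eta^2|u|^2\,d\pi$ is the delicate one and follows from Lemma \ref{W12 to L2mu} (the continuous embedding $W^{1,2}_{w,0}(B')\hookrightarrow L^2(B',d\pi)$ for any ball $B'\supset\supp\eta$), applied to $\eta u$. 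Once admissibility is secured, the rest is the routine Young/absorption bookkeeping sketched above; I would present it compactly, noting only that the constant depends on $\sigma$, $\Lambda$, and the doubling data through \eqref{degenerate} and the cutoff bound, but not on $\tau$ or $u$.
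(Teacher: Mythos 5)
Your proposal is correct and follows essentially the same path as the paper: test the weak formulation with $\eta^2\,\overline u$, expand, use ellipticity \eqref{degenerate}, and absorb the cross term by Young's inequality with a cutoff bounded by $(1-\sigma)^{-1}R^{-1}$. The only cosmetic difference is that you split into real and imaginary parts to isolate the $\tau$-term, whereas the paper bounds the sum of all three left-hand terms at once by the modulus of the complex identity (using that the real part is nonnegative so $|a+ib|\gtrsim a+|b|$); both are equivalent, and your extra remark on admissibility of the test function via Lemma \ref{W12 to L2mu} is a reasonable addition the paper leaves implicit.
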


\begin{proof}
Let $\eta \in C_c^\infty(B)$ be such that 
\[
\eta \ge 0,
\quad
\eta|_{\sigma B} = 1
\ca
|\nabla \eta| \le \frac{1}{(1-\sigma) \, R}.
\]

Using $\eta^2 \overline{u}$ as a test function, we have
\[
\int_B \eta^2 \, A \, \nabla u \cdot \nabla \overline{u} \, dw 
+ \int_B \eta^2 \, |u|^2 \, d\pi
+ i\tau \, \int_{B} \eta^2 \, |u|^2 \, dw
= - 2 \int_B \eta \, \overline{u} \, A \, \nabla u \cdot \nabla\eta \, dw.
\]
Consequently,
\begin{eqnarray*}
\Lambda^{-1} \int_{\sigma B} \eta^2 \, |\nabla u|^2 \, dw 
&+& \int_{\sigma B} \eta^2 \, |u|^2 \, d\pi
+ |\tau| \, \int_{\sigma B} |u|^2 \, dw\\
&\ls& \left| \int_B \eta^2 \, A \, \nabla u \cdot \nabla \overline{u} \, dw 
+ \int_B \eta^2 \, |u|^2 \, d\pi
+ i\tau \,  \int_{B} \eta^2 \, |u|^2 \, dw \right|
\\
&=& 2 \left| \int_B \eta \, \overline{u} \, A \, \nabla u \cdot \nabla\eta \, dw \right|
\\
&\le& 2 \Lambda \int_B |\eta \, \nabla u| \, |\overline{u} \, \nabla \eta| \, dw
\\
&\le& \epsilon \int_B \eta^2 \, |\nabla u|^2 \, dw  + \frac{\Lambda^2}{\epsilon} \int_B |u|^2 \, |\nabla \eta|^2 \, dw
\\
&\le& \epsilon \int_B \eta^2 \, |\nabla u|^2 \, dw + \frac{\Lambda^2}{\epsilon \, (1-\sigma)^2 \, R^2} \int_B u^2 \, dw
\end{eqnarray*}
for all $\epsilon > 0$.

Choosing a sufficiently small $\epsilon$ in the above inequality, our claim is justified.
\end{proof}

\begin{lemm} \label{next to main}
Let $x_0 \in \Ri^d$, $R > 0$ and $B = B(x_0,R)$.
Let $u$ be a solution of $Lu + i\tau \, u=0$ in $4B$.
Then for all $k \in \Ni$ there exists a $C > 0$ such that 
\[
\sup_{B} |u|
\le \frac{C}{\big( 1 + R \sqrt{\tau} \big)^k \, \big( 1 + R \, m_w(x_0,\mu) \big)^k} \, 
\left( \frac{1}{w(2B)} \, \int_{2B} |u|^2 \, dw \right)^{1/2}.
\]
\end{lemm}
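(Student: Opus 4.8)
The plan is to reduce the pointwise bound to an $L^2$-averaged decay estimate, prove a one-scale Caccioppoli gain, and iterate it over dyadic scales.

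\textbf{Reduction.} First I would verify that $|u|$ is a non-negative sub-solution of $L_0$ in $4B$. Writing $(L+i\tau)u=0$ in weak form and testing against $\phi\,\overline u/(|u|+\varepsilon)$ for $0\le\phi\in C_c^1(4B)$, then letting $\varepsilon\downarrow0$ exactly as in the proof of Lemma \ref{sub lem 1}, the imaginary/potential contribution collapses to $\int\phi\,\mathrm{Re}\big(\overline u(\mu+i\tau)u\big)=\int\phi\,|u|^2\,d\mu\ge0$, leaving $\int_{4B}A\,\nabla|u|\cdot\nabla\phi\,dx\le0$. By Lemma \ref{u sub L0} this settles $k=0$ and reduces the lemma to the following $L^2$ decay: for every $N\in\Ni$, every $z\in\Ri^d$ and every $v$ solving $(L+i\tau)v=0$ in $B(z,3R)$,
\[
\frac{1}{w(B(z,r_z))}\int_{B(z,r_z)}|v|^2\,dw
\le C_N\,\big(1+R\,m_w(z,\mu)+R\sqrt{|\tau|}\big)^{-N}\,\frac{1}{w(B(z,R))}\int_{B(z,R)}|v|^2\,dw,
\]
where $r_z:=\min\{R,\rho_w(z,\mu),|\tau|^{-1/2}\}$, so that $R/r_z\sim1+R\,m_w(z,\mu)+R\sqrt{|\tau|}$. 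Granting this, apply it with $z$ the point of $\overline B$ realizing $\sup_B|u|$, then the sub-solution estimate on $B(z,2r_z)$, then $w(B(z,R))\gtrsim w(2B)$ (doubling, since $B(x_0,2R)\subset B(z,3R)$), and finally Proposition \ref{crit}(iv) to pass from $1+R\,m_w(z,\mu)$ to a fixed power of $1+R\,m_w(x_0,\mu)$; the two decay factors come out together because $(1+R\,m_w)(1+R\sqrt{|\tau|})\lesssim(1+R\,m_w+R\sqrt{|\tau|})^2$.

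\textbf{One-scale gain.} The engine is: if $r\le\rho_w(z,\mu)$ and $v$ solves $(L+i\tau)v=0$ in $B(z,4r)$, then
\[
\big(1+(r\,m_w(z,\mu))^2+r^2|\tau|\big)\,\frac{1}{w(B(z,r))}\int_{B(z,r)}|v|^2\,dw
\le C\,\frac{1}{w(B(z,4r))}\int_{B(z,4r)}|v|^2\,dw .
\]
This follows by applying Caccioppoli (Lemma \ref{caccioppoli}) on the appropriate dilates of $B(z,r)$ to bound $\int(|\nabla v|^2+|v|^2\,d\pi+|\tau||v|^2)\,dw$ by $Cr^{-2}\int_{B(z,4r)}|v|^2\,dw$, and then feeding the gradient and $d\pi$ terms into the Fefferman--Phong inequality of Proposition \ref{norm equiv} applied to $\eta v$, where $\eta=1$ on $B(z,r)$, $\supp\eta\subset B(z,2r)$, $|\nabla\eta|\lesssim1/r$, using Proposition \ref{crit}(iii) to replace $m_w(\cdot,\mu)$ by $m_w(z,\mu)$ on the support of $\eta$. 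In particular the average drops by a factor bounded away from $1$ as soon as $r\sim\rho_w(z,\mu)$ or $r\sim|\tau|^{-1/2}$.

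\textbf{Iteration and the main difficulty.} When $R\lesssim\rho_w(z,\mu)$ all scales between $r_z$ and $R$ are admissible at the fixed centre $z$ and all relevant dilates stay inside $B(z,3R)$; chaining the one-scale gain over the $\sim\log_2(2+R\sqrt{|\tau|})$ dyadic scales from $r_z$ up to $R$ gives a product of gains of order $\exp(-c(\log(2+R\sqrt{|\tau|}))^2)$, which beats every power of $1+R\,m_w(z,\mu)+R\sqrt{|\tau|}$ (here $1+R\,m_w(z,\mu)\sim1$). The genuinely harder regime is $R\gtrsim\rho_w(z,\mu)$, since one cannot keep doubling the scale at a fixed centre past $\rho_w(z,\mu)$. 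Here I would supplement the argument at scales $r\ge\rho_w(z,\mu)$ by: (a) the definition of $\rho_w$ together with the doubling of $\pi$ on balls of radius $\ge\rho_w(z,\mu)$ (as in the proof of Lemma \ref{big R}), which gives $\pi(B)\gtrsim w(B)/r^2$ at these scales and hence, via Caccioppoli again, a constant-factor gain per dyadic scale above the critical radius; and (b) the covering Lemma \ref{xalpha psialpha}, used to split the large balls into pieces of comparable critical radius (with neighbouring values of $\rho_w$ controlled by Proposition \ref{crit}(iii)--(iv)) on which the sub-solution estimate tames the oscillation of $v$. Iterating across and above the critical scale over the $\gtrsim\log(1+R\,m_w(z,\mu))$ relevant scales and combining with the sub-critical contribution yields the $L^2$ decay for all $N$, and feeding it back through the sub-solution estimate for $|u|$ closes the proof. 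I expect step (b) --- propagating the gain across and above the critical scale while keeping the recentring errors from Proposition \ref{crit} under control, given that each individual large-scale step only buys a bounded factor --- to be the main obstacle.
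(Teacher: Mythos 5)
Your reduction (showing $|u|$, or in the paper's version $|u|^2$, is a sub-solution of $L_0$ so that Lemma \ref{u sub L0} converts the desired bound into an $L^2$ decay estimate, then using Caccioppoli plus Fefferman--Phong to produce a per-step gain) is the right skeleton, and it matches the paper's strategy. But the way you propose to extract the decay in $1+R\,m_w$ is genuinely different from what the paper does, and it has a gap that you yourself flag. Your iteration is dyadic in scale, from $r_z=\min\{R,\rho_w(z,\mu),|\tau|^{-1/2}\}$ up to $R$. At subcritical scales $r\le\rho_w(z,\mu)$ your one-scale gain factor $\bigl(1+(r\,m_w(z,\mu))^2+r^2|\tau|\bigr)^{-1}$ yields nothing for the $m_w$-part, because $r\,m_w(z,\mu)\le1$ there; so all the $m_w$ decay has to come from the supercritical scales $r\gtrsim\rho_w(z,\mu)$, which is exactly the regime you describe as ``the main obstacle'' and leave unresolved. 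The sketch ``split the large balls into pieces of comparable critical radius'' via Lemma \ref{xalpha psialpha} with recentring controlled by Proposition \ref{crit} is not a proof, and it is not clear it can be made one without introducing precisely the losses you are worried about.

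The paper sidesteps the supercritical problem entirely by never changing scale. It iterates over a \emph{fixed} chain of nested balls $B_k=(1+2^{-k\lceil(k_0+1)/2\rceil})B$, all of radius comparable to $R$. The crucial technical point you are missing is that Proposition \ref{norm equiv} is already a \emph{global} inequality (its proof sums the critical-scale Poincar\'e estimates over the cover of Lemma \ref{xalpha psialpha}), so it can be applied to the cutoff $u\eta$ at scale $R$ with no scale restriction; the multi-scale structure is already packaged inside it. The other ingredient is Proposition \ref{crit}(iv), which gives the uniform \emph{pointwise} lower bound $m_w(x,\mu)\gtrsim m_w(x_0,\mu)\bigl(1+R\,m_w(x_0,\mu)\bigr)^{-k_0/(k_0+1)}$ for all $x\in 2B$, hence $R^2m_w(x,\mu)^2\gtrsim\bigl(1+R\,m_w(x_0,\mu)\bigr)^{2/(k_0+1)}$ --- a genuine gain even when $R\gg\rho_w(x_0,\mu)$. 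Combining Caccioppoli on $B_k\subset B_{k-1}$ with this produces a constant-factor decay per step of $\bigl(1+R\,m_w(x_0,\mu)\bigr)^{-2/(k_0+1)}$; iterating $k\lceil(k_0+1)/2\rceil$ times (which is why those particular nested dilates are chosen) gives the full $\bigl(1+R\,m_w(x_0,\mu)\bigr)^{-k}$. The $\tau$-factor is obtained by the same argument with the $|\tau|$-term of Caccioppoli. Note also a small but non-cosmetic point in your reduction: for complex-valued $u$ solving $(L+i\tau)u=0$, the paper establishes that $|u|^2$ (not $|u|$) is a sub-solution of $L_0$, by a direct computation with the test function $\overline u\,\psi$ and taking real parts; Lemmas \ref{sub lem 1}--\ref{sub lem 2} only cover real $u$ with $\tau=0$, and your Kato-type test $\phi\,\overline u/(|u|+\varepsilon)$ would need to be justified separately.
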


\begin{proof}
Let $k \in \Ni$, $B = B(x_0,R)$ and $B_k = (1 + 2^{-k \, \lceil (k_0+1)/2 \rceil})B$, where $\lceil a \rceil$ denotes the smallest integer greater than $a$ for every $a\in \mathbb{R}$.

To begin with, we will show that $|u|^2$ is a sub-solution of $L_0$ in $4B$ (in the sense of Definition \ref{sub-sol L0}).
Let $\psi \in C_c^1(4B)$ be non-negative.
Direct calculations give
\begin{eqnarray*}
	\int_{4B} A \, \nabla(|u|^2) \cdot \nabla \psi \, dx
	&=& \int_{4B} A \, \nabla(u \, \overline{u}) \cdot \nabla \psi \, dx
	= \int_{4B} A \, \big( \overline{u} \, \nabla u + u \, \nabla \overline{u} \big) \cdot \nabla \psi \, dx
	\\
	&=& 2 \, \Re \int_{4B} \overline{u} \, A \, \nabla u \cdot \nabla \psi \, dx,
\end{eqnarray*}
where $\Re z$ denotes the real part of a complex number $z$.

On the other hand, by using $\overline{u} \, \psi$ as a test function, the hypothesis gives
\begin{eqnarray*}
	\int_{4B} \psi \, A \, \nabla u \cdot \nabla \overline{u} \, dx
	+ \int_{4B} \overline{u} \, A \, \nabla u \cdot \nabla \psi \, dx
	+ \int_{4B} |u|^2 \, \psi \, d\pi 
	+ i \tau \,  \int_{4B} |u|^2 \, dw
	= 0.
\end{eqnarray*}
Now we take the real parts of both sides to derive
\[
\int_{4B} \psi \, A \, \nabla u \cdot \nabla \overline{u} \, dx
+ \Re \int_{4B} \overline{u} \, A \, \nabla u \cdot \nabla \psi \, dx
+ \int_{4B} |u|^2 \, \psi \, d\pi 
= 0,
\]
or equivalently,
\begin{eqnarray*}
	\Re \int_{4B} \overline{u} \, A \, \nabla u \cdot \nabla \psi \, dx
	&=& - \int_{4B} \psi \, A \, \nabla u \cdot \nabla \overline{u} \, dx
	- \int_{4B} |u|^2 \, \psi \, d\pi 
	\\
	&\le& -\Lambda^{-1} \int_{4B} \psi \, |\nabla u|^2 \, dw
	- \int_{4B} |u|^2 \, \psi \, d\pi 
	\le 0,
\end{eqnarray*}
where we used \eqref{degenerate} in the second step.

Hence 
\[
\int_{4B} A \, \nabla(|u|^2) \cdot \nabla \psi \, dx \le 0
\]
and so $|u|^2$ is a sub-solution of $L_0$ in $4B$.

With this in mind we next apply Lemma \ref{u sub L0} to obtain
\[
\sup_B |u| 
\ls \left( \frac{1}{w(B_k)} \, \int_{B_k} |u|^2 \, dw \right)^{1/2}.
\]
Hence the claim is clear if $k=0$.

Next suppose $k \ge 1$.
By Lemma \ref{caccioppoli} one has
\[
\int_{B_k} |\nabla u|^2 \, dw 
+ \int_{B_k} |u|^2 \, d\pi 
\ls \frac{1}{R^2} \, \int_{B_{k-1}} |u|^2 \, dw.
\]

Next let $\eta \in C_c^\infty(B_{k-1})$ be such that 
\[
\eta|_{B_k} = 1,
\quad
\eta \le 1
\ca 
|\nabla \eta| \ls \frac{1}{R}.
\]
Applying Proposition \ref{norm equiv} to $u \, \eta$ yields
\[
\int_{B_k} m_w(\cdot,\mu)^2 \, |u|^2 \, dw
\ls \int_{B_{k-1}} |\nabla u|^2 \, dw 
+ \int_{B_{k-1}} |u|^2 \, d\pi + \frac{1}{R^2} \, \int_{B_{k-1}} |u|^2 \, dw,
\]
which in turn implies
\[
\int_{B_k} m_w(\cdot,\mu)^2 \, |u|^2 \, dw
\ls \frac{1}{R^2} \, \int_{B_{k-1}} |u|^2 \, dw.
\]
Combining this with Proposition \ref{crit}(iv) we yield
\[
\int_{B_k} |u|^2 \, dw 
\ls \frac{1}{\big( 1 + R \, m_w(x_0,\mu) \big)^{2/(k_0+1)}} \, \int_{B_{k-1}} |u|^2 \, dw.
\]
Iterating the above estimate $k \, \lceil (k_0+1)/2 \rceil$ times and using Lemma \ref{u sub L0} we arrive at 
\[
\int_{B} |u|^2 \, dw
\ls
\frac{C}{\big( 1 + R \, m_w(x_0,\mu) \big)^k} \, 
\left( \frac{1}{w(2B)} \, \int_{2B} |u|^2 \, dw \right)^{1/2}.
\]
Similar arguments together with Lemma \ref{caccioppoli} also gives
\[
\int_{B} |u|^2 \, dw
\ls
\frac{C}{\big( 1 + R \sqrt{\tau} \big)^k} \, 
\left( \frac{1}{w(2B)} \, \int_{2B} |u|^2 \, dw \right)^{1/2}.
\]
To conclude we combine these two estimates and yield the claim.
\end{proof}

\begin{prop} \label{sub main}
Let $\tau \in \Ri$.
Let $\Gamma_\mu(x,y,\tau)$ be the fundamental solution of $
L + i\tau$ in $\Ri^d$. Then for every $k \in \Ni$ there exists a $C=C(k) > 0$ such that 
\begin{equation}\label{upper bound for gammamutau}
|\Gamma_\mu(x,y,\tau)|
\le \frac{C}{\big( 1 + |x-y| \sqrt{\tau} \big)^k \, \big( 1 + |x-y| \, m_w(x,\mu) \big)^k} \frac{|x-y|^2}{w(B(x,|x-y|))}
\end{equation}
for all $x, y \in \Ri^d$ such that $x \ne y$.
\end{prop}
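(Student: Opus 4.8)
The plan is the standard reduction for pointwise kernel bounds: estimate $|\Gamma_\mu(x,y,\tau)|$ by a weighted $L^2$-average of $\Gamma_\mu(\cdot,y,\tau)$ over a small ball centred at $x$, and then feed into that average the crude comparison $|\Gamma_\mu(\cdot,y,\tau)|\le \Gamma_\mu(\cdot,y)\le C\,\Gamma_0(\cdot,y)$ supplied by Proposition \ref{Gam poitwise bound} together with the upper bound on $\Gamma_0$ from Proposition \ref{funda0}. Fix $x\ne y$, write $r=|x-y|$, and put $B=B(x,r/8)$, so that $4B=B(x,r/2)$ lies at distance $r/2$ from $y$. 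By the defining property of the fundamental solution (as in Proposition \ref{funda prop}(ii), applied to $L+i\tau$), the function $u:=\Gamma_\mu(\cdot,y,\tau)$ is a weak solution of $(L+i\tau)u=0$ in $\Ri^d\setminus\{y\}$; away from the pole such solutions lie in $W^{1,2}_{w,\loc}$ (Lemma \ref{caccioppoli}) and admit a locally bounded, continuous representative by the De Giorgi--Nash--Moser theory in the $A_2$-degenerate setting (cf.\ \cite{FKS} and Lemma \ref{u sub L0}). Hence $u$ is a genuine solution in $4B$ and $|\Gamma_\mu(x,y,\tau)|=|u(x)|\le\sup_B|u|$.

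First I would apply Lemma \ref{next to main} with $x_0=x$, $R=r/8$, to get, for each $k\in\Ni$,
\[
|\Gamma_\mu(x,y,\tau)|\le\sup_B|u|\le\frac{C}{\big(1+\tfrac{r}{8}\sqrt\tau\big)^{k}\,\big(1+\tfrac{r}{8}\,m_w(x,\mu)\big)^{k}}\left(\frac{1}{w(2B)}\int_{2B}|u|^{2}\,dw\right)^{1/2}.
\]
Then I would estimate the average on $2B=B(x,r/4)$. For $x'\in 2B$ one has $|x'-y|\in[\tfrac{3}{4}r,\tfrac{5}{4}r]$ and $B(x,\tfrac{r}{2})\subset B(x',|x'-y|)\subset B(x,\tfrac{3r}{2})$, so the doubling property \eqref{doubling from w} of $w\in A_2$ gives $w(B(x',|x'-y|))\sim w(B(x,r))$; combining Propositions \ref{Gam poitwise bound} and \ref{funda0},
\[
|u(x')|=|\Gamma_\mu(x',y,\tau)|\le C\,\Gamma_0(x',y)\le C\,\frac{|x'-y|^{2}}{w(B(x',|x'-y|))}\le C\,\frac{r^{2}}{w(B(x,r))}
\]
uniformly for $x'\in 2B$. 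Consequently $\big(\tfrac{1}{w(2B)}\int_{2B}|u|^{2}\,dw\big)^{1/2}\le C\,r^{2}/w(B(x,r))$.

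Finally I would combine the two displays and clean up the decay factors: since $1+\tfrac{r}{8}\sqrt\tau\ge\tfrac{1}{8}(1+r\sqrt\tau)$ and $1+\tfrac{r}{8}\,m_w(x,\mu)\ge\tfrac{1}{8}(1+r\,m_w(x,\mu))$, replacing $r/8$ by $r$ costs only the factor $8^{2k}$, which is absorbed into $C=C(k)$; this yields precisely \eqref{upper bound for gammamutau}. No deep obstacle remains once Lemma \ref{next to main} is in hand; the two points that must be handled with care are (i) the regularity remark guaranteeing that $\Gamma_\mu(\cdot,y,\tau)$ may legitimately be plugged into Lemma \ref{next to main} near $x$, and (ii) the weighted geometry, i.e.\ the comparison $w(B(x',|x'-y|))\sim w(B(x,|x-y|))$ for $x'\in 2B$ and the observation that rescaling the radius by the fixed factor $1/8$ transfers the $(1+R\sqrt\tau)$ and $(1+R\,m_w(x_0,\mu))$ decay of Lemma \ref{next to main} faithfully into decay in $|x-y|$.
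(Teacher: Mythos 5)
Your proof is correct and follows the same route as the paper: fix $B = B(x, |x-y|/8)$, apply Lemma \ref{next to main} to $u = \Gamma_\mu(\cdot, y, \tau)$ as a weak solution of $(L+i\tau)u=0$ on $4B$, and bound the resulting $L^2$-average over $2B$ using Propositions \ref{Gam poitwise bound} and \ref{funda0} together with the doubling property of $w$. In fact your write-up is slightly more complete than the paper's: you explicitly retain the $(1+|x-y|\sqrt{\tau})^k$ decay factor (the paper's displayed chain drops it, although Lemma \ref{next to main} of course supplies it) and you spell out the regularity remark and the $R/8\to R$ rescaling, both treated tacitly in the paper.
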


\begin{proof}
	Let $x, y \in \Ri^d$ be such that $x \ne y$. 
Set $R = |x-y|$ and $B = B(x,R/8)$.
By Proposition \ref{funda0},
\begin{equation}
\Gamma_0(z,y) 
\ls \frac{|z-y|^2}{w(B(z,|z-y|))}
\end{equation}
for all $z \in 2B$. 

Applying \eqref{doubling from w},
\begin{equation} 
\begin{aligned} \label{G0 est}
\Gamma_0(z,y) 
&\ls \frac{|z-y|^2}{R^2} \f{w(B(z, 8R))}{w(B(z,|z-y|))}\f{R^2}{w(B(z,8R))}\\
&\ls \Big(\frac{|z-y|}{R}\Big)^{2d-2}\f{R^2}{w(8B)}\\
&\ls \frac{R^2}{w(B)} 
\end{aligned}
\end{equation}
for all $z \in 4B$. 

Next observe that  $u(\cdot) := \Gamma_\mu(\cdot,y,\tau)$ is a weak solution of $(L + i\tau)u=0$ in $4B$.
It follows that for all $k \in \Ni$ one has
\begin{eqnarray*}
	\sup_{B} |u| 
	&\ls& \frac{1}{\big( 1 + R \, m_w(x,\mu) \big)^k} \, \left( \frac{1}{w(2B)} \, \int_{2B} |u|^2 \, dw \right)^{1/2}
	\\
	&\ls& \frac{1}{\big( 1 + R \, m_w(x,\mu) \big)^k} \, \left( \frac{1}{w(2B)} \, \int_{2B} |\Gamma_0(\cdot,y)|^2 \, dw \right)^{1/2}
	\\
	&\ls& \frac{1}{\big( 1 + R \, m_w(x,\mu) \big)^k} \, \frac{R^2}{w(B)},
\end{eqnarray*}
where we used Lemma \ref{next to main} in the first step, Proposition \ref{Gam poitwise bound} in the second step as well as \eqref{G0 est} in the last step.
\end{proof}

\begin{proof}[{\bf Proof of Theorem \ref{main}}] 
	We note that $\Gamma_\mu(\cdot,\cdot) \ge 0$ follows from Proposition \ref{Gam poitwise bound}.
	The upper bound \eqref{upper bound for gammamu} for $\Gamma_\mu(\cdot,\cdot)$ is obtained by setting $\tau = 0$ in Proposition \ref{sub main}.
\end{proof}

\section{Estimates on heat kernel $k_t(x,y)$} \label{Sect kernels}

This section is dedicated to proving Theorem \ref{main 2}. 

We recall some estimates on the kernel  $h_t(\cdot,\cdot)$ of $e^{-tL_0}$. See for example \cite[Section 3]{Dzi}. 

\begin{prop}\label{ht prop}
	The following properties hold for the kernel $h_t(x,y)$ of $e^{-tL_0}$.
	\begin{tabeleq}
		\item There exist constants $C, c > 0$ such that  
		\[
		\frac{1}{w(B(x,\sqrt{t}))} \, \exp\Big(-\f{|x-y|^2}{ct}\Big) \ls h_t(x,y) 
		\ls \frac{1}{w(B(x,\sqrt{t}))} \, \exp\Big(-\f{|x-y|^2}{ct}\Big)
		\]
		for all $x,y \in \Ri^d$ and $t > 0$.
		\item There exist constants $C> 0$ and $\gamma\in (0,1]$ such that
		\[
		|h_t(x,y) - h_t(x,z)| 
		\ls \frac{1}{w(B(x,\sqrt{t}))} \, \left( \frac{|y-z|}{\sqrt{t}} \right)^\gamma \Big[\exp\Big(-\frac{|x-y|^2}{ct}\Big)+\exp\Big(-\frac{|x-z|^2}{ct}\Big)\Big]
		\]
		for all $x,y \in \Ri^d$ and $t > 0$.
		\item There exists a constant $C > 0$ such that for all $k \in \Ni$ there exists a $c > 0$ satisfying
		\[
		|\D_t^k h_t(x,y)| \le \frac{c}{t^k \, w(B(x,\sqrt{t}))} \, \exp\Big(-\f{|x-y|^2}{ct}\Big)
		\]
		for all $x,y \in \Ri^d$ and $t > 0$.
		\item For every $x\in \Ri^d$,
		\[
		\int_{\Ri^d} h_t(x,y)\,dw(y)=1.
		\]
	\end{tabeleq}
\end{prop}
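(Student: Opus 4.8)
\emph{Approach.} All four assertions are the standard package of heat-kernel estimates for the strongly local, regular Dirichlet form $\gota_0(u,v)=\int_{\Ri^d}A\,\nabla u\cdot\nabla v\,dx$ on $L^2_w(\Ri^d)$ whose associated nonnegative self-adjoint operator is $L_0$. The plan is to verify that this data meets the hypotheses of the Grigor'yan--Saloff-Coste--Sturm characterization of the parabolic Harnack inequality, and then to read off (i)--(iv). First, \eqref{degenerate} gives $\gota_0(u,u)\sim\int_{\Ri^d}|\nabla u|^2\,dw$, so $\gota_0$ is a regular strongly local Dirichlet form on the metric measure space $(\Ri^d,|\cdot-\cdot|,dw)$; since $w\in A_2$, the measure $dw$ is doubling by \eqref{doubling from w}; and Proposition \ref{Poincare}, after dividing by $w(B)$ and using $\int_B|\phi-\phi_B|^2\,dw\le\f{1}{w(B)}\int_B\int_B|\phi(x)-\phi(y)|^2\,dw(x)\,dw(y)$ (with $\phi_B$ the $dw$-average of $\phi$ over $B$), is exactly the scale-invariant $L^2$-Poincar\'e inequality on balls. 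Volume doubling together with this Poincar\'e inequality is equivalent to the two-sided Gaussian bound (i) \emph{and} to the scale-invariant parabolic Harnack inequality for nonnegative local solutions of $\D_t u+L_0u=0$; this yields (i).

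\emph{Items (ii) and (iii).} Statement (ii) follows from the parabolic Harnack inequality by the usual oscillation-decay (De Giorgi--Nash--Moser) argument, applied to the caloric function $(s,y)\mapsto h_s(x,y)$ on parabolic cylinders of scale $\sqrt t$: this produces H\"older regularity with a universal exponent $\gamma\in(0,1]$, and combining the resulting local estimate with the on-diagonal bound $h_t(x,x)\lesi 1/w(B(x,\sqrt t))$ from (i) gives the stated estimate with its Gaussian factors. (Alternatively one runs the parabolic analogue of the elliptic regularity theory of \cite{FKS}, which is developed in precisely this degenerate weighted setting.) For (iii) one uses that $L_0\ge 0$ is self-adjoint, so $e^{-zL_0}$ is holomorphic on $\{\Re z>0\}$; the on-diagonal bound from (i) together with the $L^2$ off-diagonal (Davies--Gaffney) estimate coming from strong locality upgrades to a complex Gaussian bound $|h_z(x,y)|\lesi\f{1}{w(B(x,\sqrt{|z|}))}\exp\big(-c|x-y|^2/|z|\big)$ uniformly for $z$ in any fixed subsector of $\{\Re z>0\}$, and then $\D_t^k h_t(x,y)=\f{k!}{2\pi i}\oint_{|z-t|=t/2}\f{h_z(x,y)}{(z-t)^{k+1}}\,dz$ gives (iii) after estimating the integrand.

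\emph{Item (iv) and the main difficulty.} Since $\gota_0$ is a Dirichlet form the semigroup is sub-Markovian, and conservativeness $e^{-tL_0}1=1$, equivalently $\int_{\Ri^d} h_t(x,y)\,dw(y)=1$, follows from the polynomial volume growth $w(B(x,r))\lesi r^{2d}w(B(x,1))$ (a consequence of \eqref{doubling from w}) via Grigor'yan's volume test for stochastic completeness: $1$ is a global weak solution of $L_0u=0$, the upper bound in (i) makes $\int_{\Ri^d} h_t(x,\cdot)\,dw$ a bounded caloric function, and uniqueness forces it to equal its initial value $1$. The only genuinely nontrivial point is (i): one must be sure that the degenerate ellipticity \eqref{degenerate} together with $w\in A_2\cap RD_\beta$ really does place us inside the Grigor'yan--Saloff-Coste--Sturm framework --- regularity and strong locality of the form, and the scale-invariant Poincar\'e inequality of Proposition \ref{Poincare} holding on all balls --- after which (ii)--(iv) are comparatively routine consequences. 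All of this is carried out in the literature; see \cite{FKS} for the underlying elliptic regularity theory and \cite[Section 3]{Dzi} for exactly these kernel bounds, so we only record the statement.
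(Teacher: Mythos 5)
The paper offers no proof of this proposition at all: it simply states ``We recall some estimates on the kernel $h_t(\cdot,\cdot)$ of $e^{-tL_0}$. See for example \cite[Section 3]{Dzi}.'' Your proposal ultimately lands in the same place --- citing \cite{Dzi} and \cite{FKS} --- but it usefully supplies a correct roadmap (doubling $+$ Poincar\'e $\Rightarrow$ Grigor'yan--Saloff-Coste--Sturm $\Rightarrow$ two-sided Gaussian bounds and parabolic Harnack for (i); oscillation decay for the H\"older estimate (ii); holomorphy of $e^{-zL_0}$ and Cauchy's formula for the time-derivative bounds (iii); and conservativeness via Grigor'yan's volume test for (iv)); this is essentially the same approach, just with the intermediate steps spelled out rather than left entirely to the reference.
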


Since $\mu$ is a non-negative Radon measure, a perturbation formula asserts that 
\begin{equation} \label{k<h}
\begin{aligned}
0 \le k_t(x,y) &\le h_t(x,y)&\lesi \frac{1}{w(B(x,\sqrt{t}))} \, \exp\Big(-\f{|x-y|^2}{ct}\Big)
\end{aligned}
\end{equation}
for all $x, y \in \Ri^d$ and all $t>0$.

We are now ready to give the proof of Theorem \ref{main 2} (i).

\begin{proof}[{\bf Proof of Theorem \ref{main 2}} {\rm(i)}]

In what follows, let $x, y \in \Ri^d$ and $t > 0$. We divide the proof into two cases.

{\bf Case I: $|x-y| \ge \sqrt{t}$.}

By functional calculus,
\[
k_t(x,y) = C \, \int_\Ri e^{it\tau} \, \Gamma_\mu(x,y,\tau) \, d\tau.
\]
Hence Theorem \ref{main} gives
\begin{equation} \label{kt est 1}
k_t(x,y) \ls \frac{1}{\big( 1 + \sqrt{t} \, m_w(x,\mu) \big)^k \, w(B(x,\sqrt{t}))}
\end{equation}
for all $k \in \Ni$.
Combining this with \eqref{k<h} and Proposition \ref{ht prop} (i), we have
\[
k_t(x,y) \ls \frac{1}{\big( 1 + \sqrt{t} \, m_w(x,\mu) \big)^k \, w(B(x,\sqrt{t}))} \, \exp\Big(-\f{|x-y|^2}{ct}\Big)
\]
for all $k \in \Ni$.


{\bf Case II: $|x-y| < \sqrt{t}$.}

The semigroup $\{e^{-tL}\}_{t>0}$ can be extended to a holomorphic contraction semigroup on $L^2_w(\Ri^d)$.
Therefore for all $k \in \Ni$ there exists a $C > 0$ such that
\[
\|\D_t^k e^{-tL}\|_{L^2_w(\Ri^d) \to L^2_w(\Ri^d)}
\le \frac{C}{t^k}.
\]

Observe that 
\[
(\D_t k_t)(x,y) = \big( \D_t e^{-tL} \big) \, \big(k_t(\cdot,y)\big)(x).
\]
Consequently, we obtain
\[
\|(\D_t k_t)(x,\cdot)\|_{L^2_w(\Ri^d)}
\ls \frac{1}{t} \, \|k_t(x,\cdot)\|_{L^2_w(\Ri^d)} 
\ls \frac{1}{t \, w(B(x,\sqrt{t}))^{1/2}},
\]
where we used \eqref{k<h} and Proposition \ref{ht prop}(i) in the last step.
Using Schwartz's inequality,
\begin{equation} \label{Dtkt est 1}
|(\D_t k_t)(x,y)|
\ls \frac{1}{t \, w(B(x,\sqrt{t}))^{1/2} \, w(B(y,\sqrt{t}))^{1/2}}.
\end{equation}

Next we estimate $k_t$ as follows
\begin{eqnarray*}
k_t(x,y) 
&=& \int_{\Ri^d} \Gamma_\mu(x,z) \, (\D_t k_t)(z,y) \, dw(z)
\\
&\ls& \int_{\Ri^d} \frac{1}{\big( 1 + |x-z| \, m_w(x,\mu) \big)^N} \frac{|x-z|^2}{w(B(x,|x-z|))}  \frac{1}{t \, w(B(y,\sqrt{t}))^{1/2} \, w(B(z,\sqrt{t}))^{1/2}} \, dw(z)
\\
&=& \sum_{j \in \Ni} \int_{2^jB \setminus 2^{j-1}B} \cdots
+ \sum_{j \in -\Ni^*} \int_{2^jB \setminus 2^{j-1}B} \cdots
\\
&=:& I + II
\end{eqnarray*}
for all $N > 0$, where $B := B(x, \rho_w(x,\mu))$.

Next we estimate each term separately.

{\bf Term $I$}: 
We have
\begin{eqnarray*}
I
&\ls& \sum_{j \in \Ni} \int_{2^jB \setminus 2^{j-1}B} \frac{1}{\big( 1 + 2^j \big)^N} \frac{2^{2j} \, \rho_w(x,\mu)^2}{w(B(x,2^j \, \rho_w(x,\mu)))} 
\\
&& \times \frac{1}{t \, w(B(x,\sqrt{t}))^{1/2} \, w(B(y,\sqrt{t}))^{1/2}} \, \left( 1 + \frac{2^j \, \rho_w(x,\mu)}{\sqrt{t}} \right)^{2d} \, dw(z)
\\
&\le& \left( \frac{\rho_w(x,\mu)}{\sqrt{t}} \right)^2 \, \frac{1}{w(B(x,\sqrt{t}))^{1/2} \, w(B(y,\sqrt{t}))^{1/2}}
\\
&& \times \sum_{j \in \Ni} \int_{2^jB \setminus 2^{j-1}B} \frac{2^{2j} \, (1 + 2^j)^{2d}}{\big( 1 + 2^j \big)^N} \, \frac{1}{w(B(x,2^j \, \rho_w(x,\mu)))} \, dw(z)
\\
&\ls& \left( \frac{\rho_w(x,\mu)}{\sqrt{t}} \right)^2 \, \frac{1}{w(B(x,\sqrt{t}))^{1/2} \, w(B(y,\sqrt{t}))^{1/2}},
\end{eqnarray*}
where $N$ is chosen large enough and we use the facts that $\sqrt{t} \, m_w(x,\mu) \ge 1$, $|x-z| \sim 2^j \, \rho_w(x,\mu)$ and
\[
w(B(x,\sqrt{t})) \ls w(B(z,\sqrt{t})) \, \left( 1 + \frac{|x-z|}{\sqrt{t}} \right)^{2d} 
\]
due to the doubling property \eqref{doubling from w} of $w$.

{\bf Term $II$}: 
Observe that in this case $|x-z| < 2\rho_w(x,\mu) < 2 \sqrt{t}$.
So $w(B(x,\sqrt{t})) \sim w(B(z,\sqrt{t}))$ and we have
\begin{eqnarray*}
II
&\ls& \sum_{j \in -\Ni} \int_{2^jB \setminus 2^{j-1}B} \frac{2^{2j} \, \rho_w(x,\mu)^2}{w(B(x,2^j \, \rho_w(x,\mu)))} 
\times \frac{1}{t \, w(B(x,\sqrt{t}))^{1/2} \, w(B(y,\sqrt{t}))^{1/2}} \, dw(z)
\\
&\ls& \left( \frac{\rho_w(x,\mu)}{\sqrt{t}} \right)^2 \, \frac{1}{w(B(x,\sqrt{t}))^{1/2} \, w(B(y,\sqrt{t}))^{1/2}}.
\end{eqnarray*}

In sum we have proved that 
\[
k_t(x,y) 
\ls \left( \frac{\rho_w(x,\mu)}{\sqrt{t}} \right)^2 \, \frac{1}{w(B(x,\sqrt{t}))^{1/2} \, w(B(y,\sqrt{t}))^{1/2}}.
\]
Also note that when $|x-y| < \sqrt{t}$ and $\sqrt{t} \, m_w(x,\mu) \ge 1$, Proposition \ref{crit}(iii) implies
\[
\big( \sqrt{t} \, m_w(x,\mu) \big)^{-1}
\ls \big( 1 + \sqrt{t} \, m_w(y,\mu) \big)^{-1/(k_0+1)}.
\]
Keeping in mind these two estimates, we now invoke the symmetry of $k_t$ and use Proposition \ref{ht prop} to obtain
\begin{eqnarray*}
k_t(x,y)
&\ls& \frac{1}{w(B(x,\sqrt{t}))^{1/2} \, w(B(y,\sqrt{t}))^{1/2}} \, \exp\Big(-\f{|x-y|^2}{ct}\Big)
\\
&& \times \big( 1 + \sqrt{t} \, m_w(x,\mu) \big)^{-1/(k_0+1)} \, \big( 1 + \sqrt{t} \, m_w(y,\mu) \big)^{-1/(k_0+1)}.
\end{eqnarray*}
In turn this better estimate of $k_t$ (compared to \eqref{kt est 1}) implies a better estimate of $\D_t k_t$ (compared to \eqref{Dtkt est 1}).
Particularly one has
\begin{eqnarray*}
|(\D_t k_t)(x,y)|
&=& \left| \int_{\Ri^d} (\D_t k_{t/2})(x,z) \, k_{t/2}(z,y) \, dz \right|
\\
&\ls& \frac{1}{t \, w(B(x,\sqrt{t}))^{1/2} \, w(B(y,\sqrt{t}))^{1/2}}
\\
&& \times \big( 1 + \sqrt{t} \, m_w(x,\mu) \big)^{-1/(k_0+1)} \, \big( 1 + \sqrt{t} \, m_w(y,\mu) \big)^{-1/(k_0+1)}.
\end{eqnarray*}

By iterating the above estimates $N(k_0+1)$ times, we arrive at 
\begin{eqnarray*}
k_t(x,y)
&\ls& \frac{1}{w(B(x,\sqrt{t}))^{1/2} \, w(B(y,\sqrt{t}))^{1/2}} \, \exp\Big(-\f{|x-y|^2}{ct}\Big)
\\
&& \times \big( 1 + \sqrt{t} \, m_w(x,\mu) \big)^{-N} \, \big( 1 + \sqrt{t} \, m_w(y,\mu) \big)^{-N}
\end{eqnarray*}
and
\begin{eqnarray*}
	|(\D_t k_t)(x,y)|
	&=& \left| \int_{\Ri^d} (\D_t k_{t/2})(x,z) \, k_{t/2}(z,y) \, dz \right|
	\\
	&\ls& \frac{1}{t \, w(B(x,\sqrt{t}))^{1/2} \, w(B(y,\sqrt{t}))^{1/2}}
	\\
	&& \times \big( 1 + \sqrt{t} \, m_w(x,\mu) \big)^{-N} \, \big( 1 + \sqrt{t} \, m_w(y,\mu) \big)^{-N}.
\end{eqnarray*}

Thus the claim follows after applying the estimate
\[
\frac{1}{w(B(y,\sqrt{t}))} \, \exp\Big(-\f{|x-y|^2}{ct}\Big)
\ls \frac{1}{w(B(x,\sqrt{t}))} \, \exp\Big(-\f{|x-y|^2}{c't}\Big).
\]
This finishes the proof of Theorem \ref{main 2} (i).
\end{proof}

\bigskip

The proof of the item (iii) in Theorem \ref{main 2} will be given below.
\begin{proof}[{\bf Proof of Theorem \ref{main 2} (iii)}:]
	Let $t > 0$ and $x, y \in \Ri^d$.
	If $t \ge \rho_w(x,\mu)^2$ the claim follows at once from Theorem \ref{main 2}.
	Hence we need only to prove the theorem assuming that  $t <\rho_w(x,\mu)^2$.
	
	By Duhamel's formula we have
	\[
	h_t(x,y) - k_t(x,y) = \int_0^t\int_{\Ri^d} h_s(x,u) \, k_{t-s}(u,y) \, d\pi(u) \, ds.
	\]

	It follows from Proposition \ref{ht prop} (i) that
	\begin{eqnarray*}
		q_t(x,y)
		&\ls& \int_0^t \int_{\Ri^d} \frac{1}{w(B(x,\sqrt{s}))} \, e^{-\frac{|x-u|^2}{cs}} \, k_{t-s}(u,y) \, d\pi(u) \, ds
		\\
		&=:& I + II,
	\end{eqnarray*}
	where 
	\begin{eqnarray*}
		I &:=& \int_0^{t/2} \int_{\Ri^d} \frac{1}{w(B(x,\sqrt{s}))} \, \, e^{-\frac{|x-u|^2}{cs}} \, k_{t-s}(u,y) \, d\pi(u) \, ds
		\quad
		\mbox{and}
		\\
		II &:=& \int_{t/2}^t \int_{\Ri^d} \frac{1}{w(B(x,\sqrt{s}))} \, \, e^{-\frac{|x-u|^2}{cs}} \, k_{t-s}(u,y) \, d\pi(u) \, ds.
	\end{eqnarray*}
	Next we estimate each term separately.
	
	\noindent{\bf Term $I$}: The Gaussian upper bound of the kernel $k_t(\cdot,\cdot)$ allows us to get the bound
	\begin{align*}
		I 
		&= \int_0^{t/2} \int_{\Ri^d} \frac{1}{w(B(x,\sqrt{s})) \, w(B(y,\sqrt{t-s}))} \, \, e^{-\frac{|x-u|^2}{cs}} \, e^{-\frac{|y-u|^2}{c(t-s)}} \, d\pi(u) \, ds
		\\
		&\ls \int_0^{t/2} \int_{\Ri^d} \frac{1}{w(B(x,\sqrt{s})) \, w(B(y,\sqrt{t}))} \, \, e^{-\frac{|x-u|^2}{cs}} \, e^{-\frac{|y-u|^2}{ct}} \, d\pi(u) \, ds.
	\end{align*}
	
	Note that 
	\[
	e^{-\, \frac{|x-u|^2}{cs}} \, e^{-\, \frac{|y-u|^2}{ct}}
	\le e^{-\frac{|x-u|^2}{ct}} \, e^{-\frac{|y-u|^2}{ct}}
	\ls e^{-\f{|x-y|^2}{ct}}.
	\]
	Consequently,
	\begin{align*}
		I
		&\ls \frac{1}{w(B(y,\sqrt{t}))} \, \exp\Big(-\f{|x-y|^2}{ct}\Big) \, \int_0^{t/2} \int_{\Ri^d} \frac{1}{w(B(x,\sqrt{s}))} \, e^{-\frac{|x-u|^2}{cs}} \, d\pi(u) \, ds
		\\
		&\ls \frac{1}{w(B(y,\sqrt{t}))} \, \exp\Big(-\f{|x-y|^2}{ct}\Big) \, \int_0^{t/2} \frac{1}{s} \left(\frac{\sqrt{s}}{\rho_w(x,\mu)}\right)^\delta \, ds
		\\
		&\sim \frac{1}{w(B(y,\sqrt{t}))} \, \exp\Big(-\f{|x-y|^2}{ct}\Big)  \left(\frac{\sqrt{t}}{\rho_w(x,\mu)}\right)^\delta
		\\
		&\ls \frac{1}{w(B(x,\sqrt{t}))} \, \exp\Big(-\f{|x-y|^2}{ct}\Big) \, \left(\frac{\sqrt{t}}{\rho_w(x,\mu)}\right)^\delta,
	\end{align*}
	where we used Lemma \ref{Schwartz} in the second step and the fact that 
	\[
	\frac{1}{w(B(y,\sqrt{t}))} \, \exp\Big(-\f{|x-y|^2}{ct}\Big)
	\ls \frac{1}{w(B(x,\sqrt{t}))} \, \exp\left(-\frac{|x-y|^2}{c't}\right)
	\]
	in the last step.

	{\bf Term $II$}: Using a change of variables we can rewrite $II$ as follows:
	\begin{equation*}
		\int_0^{t/2} \int_{\Ri^d} \frac{1}{w(B(x,\sqrt{t-s})) \, w(B(y,\sqrt{s}))} \, \, e^{-\frac{|x-u|^2}{c(t-s)}} \, e^{-\frac{|y-u|^2}{cs}} \, d\pi(u) \, ds.
	\end{equation*}
	Arguing similarly to $I$ we conclude that 
	\[
	II 
	\ls \frac{1}{w(B(x,\sqrt{t}))} \, \exp\Big(-\f{|x-y|^2}{ct}\Big) \, \left(\frac{\sqrt{t}}{\rho_w(y,\mu)}\right)^\delta.
	\]
	On the other hand, from Proposition \ref{crit} (iii) and the fact that $t <\rho_w(x,\mu)^2$,
	\[
	\f{1}{\rho_w(y,\mu)}\lesi \f{1}{\rho_w(x,\mu)}\Big(1+\f{|x-y|}{\rho_w(x,\mu)}\Big)^{k_0}\lesi \f{1}{\rho_w(x,\mu)}\Big(1+\f{|x-y|}{t}\Big)^{k_0},
	\]
	which implies that
	\begin{equation} \label{wierd sim}
	\left( \frac{\sqrt{t}}{\rho_w(x,\mu)} \right)^{\delta} \, \exp\Big(-\f{|x-y|^2}{ct}\Big)
	\ls \left( \frac{\sqrt{t}}{\rho_w(y,\mu)} \right)^{\delta} \, \exp\Big(-\frac{|x-y|^2}{c't}\Big).
	\end{equation}
	Consequently,
	\[
	II 
	\ls \frac{1}{w(B(x,\sqrt{t}))} \, \exp\Big(-\f{|x-y|^2}{ct}\Big) \, \left(\frac{\sqrt{t}}{\rho_w(x,\mu)}\right)^\delta.
	\]

	Combining the estimates for $I$ and $II$, we arrive at the claim.
	
	This completes our proof.
\end{proof}

\bigskip

In order to prove  Theorem \ref{main 2} (ii), we need some technical ingredients. Set
\[
q_t(x,y) = h_t(x,y) - k_t(x,y)
\]
for all $x,y \in \Ri^d$ and $t > 0$. We have the following estimates on $q_t(x,y)$.

\begin{prop}\label{Proposition: holder of qt}
	For any $0<\theta<\min\{\gamma, \delta\}$ there exist $C$ and $c>0$ so that
	\begin{equation}\label{holder of qt}
	|q_{t}(x,y)-q_{t}(\overline{x},y)|\leq C\min\left\{\Big(\f{|x-\overline{x}|}{\rho_w(y,\mu)}\Big)^{\theta}, \Big(\f{|x-\overline{x}|}{\sqrt{t}}\Big)^{\theta}\right\}\f{1}{w(B(x,\sqrt{t}))}\exp\Big(-\f{|x-y|^2}{ct}\Big)
	\end{equation}
	for all $t>0$,  $|x-\overline{x}|<|x-y|/4$ and $|x-\overline{x}|<\rho_w(x,\mu)$.
\end{prop}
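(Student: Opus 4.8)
The plan is to estimate the difference $q_t(x,y)-q_t(\overline x,y)$ by exploiting the Duhamel representation
\[
q_t(x,y) = \int_0^t\int_{\Ri^d} h_s(x,u)\,k_{t-s}(u,y)\,d\pi(u)\,ds
\]
already used in the proof of Theorem \ref{main 2}(iii), and differencing in the first variable only through the factor $h_s(x,u)$, which is smooth in $x$ by Proposition \ref{ht prop}(ii). Split the $s$-integral at $t/2$ as before. For the piece $s\in(t/2,t)$ one has $s\sim t$, so one can afford to use the full H\"older bound $|h_s(x,u)-h_s(\overline x,u)|\ls w(B(x,\sqrt t))^{-1}(|x-\overline x|/\sqrt t)^\gamma[\,\exp(-|x-u|^2/cs)+\exp(-|\overline x-u|^2/cs)\,]$, then run exactly the Term $I$/Term $II$ argument from the proof of (iii): the Gaussian product collapses $e^{-|x-u|^2/cs}e^{-|y-u|^2/c(t-s)}$ to $e^{-|x-y|^2/ct}$ up to constants, the remaining $d\pi$-integral in $u$ is controlled by Lemma \ref{Schwartz}, and the $s$-integral $\int_{t/2}^t s^{-1}(\sqrt s/\rho_w(x,\mu))^\delta\,ds$ produces the factor $(\sqrt t/\rho_w(x,\mu))^{\min\{\gamma,\delta\}}$; picking any $\theta<\min\{\gamma,\delta\}$ and discarding the surplus power (it is $\le 1$ since $\sqrt t\ls\rho_w(x,\mu)$ in the relevant regime, or is absorbed into the Gaussian otherwise) gives the claimed $(|x-\overline x|/\sqrt t)^\theta$ bound; then \eqref{wierd sim} upgrades $\rho_w(x,\mu)$ to $\rho_w(y,\mu)$ and yields the $\min\{\cdots\}$.

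The delicate piece is $s\in(0,t/2)$, where $\sqrt s$ may be much smaller than $|x-\overline x|$, so the H\"older estimate on $h_s$ is useless and one must instead argue term by term. Here write $h_s(x,u)-h_s(\overline x,u) = [h_s(x,u)-h_s(\overline x,u)]$ and bound it by $h_s(x,u)+h_s(\overline x,u)$ using Proposition \ref{ht prop}(i); because $|x-\overline x|<|x-y|/4$ and $|x-\overline x|<\rho_w(x,\mu)$ the balls $B(x,\sqrt t)$ and $B(\overline x,\sqrt t)$ are comparable (via \eqref{doubling from w}) and $|x-y|\sim|\overline x-y|$, so the resulting integral is just the Term $I$ estimate from the proof of (iii) applied at both centers, giving $\ls w(B(x,\sqrt t))^{-1}\exp(-|x-y|^2/ct)(\sqrt t/\rho_w(x,\mu))^\delta$ — but \emph{without} the gain $(|x-\overline x|/\sqrt t)^\theta$. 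To recover that gain one interpolates: this crude bound and the bound from the $s\in(t/2,t)$ analysis (which does carry $(|x-\overline x|/\sqrt t)^\gamma$) together, after splitting the $(0,t/2)$-integral at $s=|x-\overline x|^2$ and using that on $s>|x-\overline x|^2$ the H\"older estimate is again available, produce $(|x-\overline x|/\sqrt t)^\theta$ for any $\theta<\min\{\gamma,\delta\}$ at the cost of shrinking the exponent away from $\min\{\gamma,\delta\}$. Concretely: on $|x-\overline x|^2<s<t/2$ use $|h_s(x,u)-h_s(\overline x,u)|\ls w(B(x,\sqrt s))^{-1}(|x-\overline x|/\sqrt s)^\gamma[\exp(-|x-u|^2/cs)+\exp(-|\overline x-u|^2/cs)]$ and integrate $\int s^{-1}(|x-\overline x|/\sqrt s)^\gamma(\sqrt s/\rho_w(x,\mu))^\delta\,ds$; on $0<s<|x-\overline x|^2$ use the crude two-term bound and integrate $\int_0^{|x-\overline x|^2} s^{-1}(\sqrt s/\rho_w(x,\mu))^\delta ds\sim(|x-\overline x|/\rho_w(x,\mu))^\delta$, which since $|x-\overline x|<\sqrt t$ in the nontrivial range is $\le(|x-\overline x|/\sqrt t)^\delta(\sqrt t/\rho_w(x,\mu))^\delta$. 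Summing all contributions and optimizing the split gives $(|x-\overline x|/\sqrt t)^\theta(\sqrt t/\rho_w(x,\mu))^\theta$ (again any $\theta<\min\{\gamma,\delta\}$), and then \eqref{wierd sim} converts $\rho_w(x,\mu)$ to $\rho_w(y,\mu)$ and at the same time absorbs surplus into the Gaussian, delivering both alternatives inside the $\min$.

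The main obstacle is precisely this small-$s$ region: the two natural estimates for $h_s(x,u)-h_s(\overline x,u)$ — the H\"older bound (good gain in $|x-\overline x|$, bad when $s\lesssim|x-\overline x|^2$) and the crude sum bound (always valid, no gain) — must be dovetailed by splitting the time integral at $s=|x-\overline x|^2$, and one must check that the various Gaussian factors $e^{-|x-u|^2/cs}$, $e^{-|\overline x-u|^2/cs}$, $e^{-|y-u|^2/c(t-s)}$ combine to $e^{-|x-y|^2/ct}$ uniformly in both centers; the hypotheses $|x-\overline x|<|x-y|/4$ (so $|x-y|\sim|\overline x-y|$ and the Gaussians at the two centers are interchangeable) and $|x-\overline x|<\rho_w(x,\mu)$ (so by Proposition \ref{crit}(iii) $\rho_w(x,\mu)\sim\rho_w(\overline x,\mu)$) are exactly what make this bookkeeping go through. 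Everything else is a repetition of the estimates in the proof of Theorem \ref{main 2}(iii) with Lemma \ref{Schwartz} and \eqref{wierd sim} doing the heavy lifting.
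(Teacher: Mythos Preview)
Your overall skeleton (Duhamel, difference through $h_s$, split at $t/2$) matches the paper, but there is a genuine gap: you never use the extra decay of $k_t$ coming from Theorem \ref{main 2}(i). In the paper's proof, $k_{t-s}(z,y)$ (respectively $k_s(z,y)$ after the change of variable) is bounded not by the plain Gaussian but by the Gaussian times $(1+\sqrt{t}/\rho_w(y,\mu))^{-N}$, and this factor is what kills the growth of Lemma \ref{Schwartz} when the time scale exceeds $\rho_w$. Your plan instead says to ``run exactly the Term I/Term II argument from the proof of (iii)'', but that argument bounds $k_{t-s}$ by $h_{t-s}$ and only works there because (iii) was proved under the standing assumption $t<\rho_w(x,\mu)^2$. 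Without that restriction, the second case of Lemma \ref{Schwartz} produces a factor $(\sqrt s/\rho_w)^{N_0}$ for $s>\rho_w^2$, and your $s$-integral then blows up like $(\sqrt t/\rho_w)^{N_0-\theta}$; this cannot be ``absorbed into the Gaussian'' $e^{-|x-y|^2/ct}$ since that factor carries no decay in $t/\rho_w$. The paper handles this explicitly: in $J_2$ and in $I_{21}$ (case $t/2>\rho_w(y,\mu)^2$) the factor $(1+\sqrt s/\rho_w(y,\mu))^{-N}$ with $N>N_0$ is what makes the integrals converge, and it is also what eventually produces the factor $(1+\sqrt t/\rho_w(y,\mu))^{-\theta}$ needed to pass from $(|x-\overline x|/\rho_w(y,\mu))^\theta$ to the $\min$.

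Two smaller points. First, your split of the $(0,t/2)$ integral at $s=|x-\overline x|^2$ is unnecessary: the H\"older bound of Proposition \ref{ht prop}(ii) with any exponent $\theta\le\gamma$ is valid for \emph{all} $s$, because when $|x-\overline x|>\sqrt s$ the crude two-term bound is automatically dominated by the H\"older-type bound (the prefactor $(|x-\overline x|/\sqrt s)^\theta$ is then $\ge 1$). The paper therefore uses the H\"older bound uniformly in $s$ and splits instead at $s=\rho_w(x,\mu)^2$, which is the relevant threshold for Lemma \ref{Schwartz}. Second, for the piece $s\in(t/2,t)$ the narrow Gaussian is the one from $k_{t-s}$ at scale $\sqrt{t-s}$, so Lemma \ref{Schwartz} must be applied at $y$ with that scale, not at $x$ with scale $\sqrt s$; your displayed integral $\int_{t/2}^t s^{-1}(\sqrt s/\rho_w(x,\mu))^\delta\,ds$ has both the center and the scale wrong.
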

\begin{proof}
	By Duhamel's formula, we have
	\[
	\begin{aligned}
	q_{t}(x,y)-q_{t}(\overline{x},y)&=\int_0^t\int_{\mathbb{R}^d} (h_s(x,z)-h_s(\overline{x},z))k_{t-s}(z,y)d\pi(z)ds\\
	&=\int_0^{t/2}\int_{\mathbb{R}^d}\ldots+\int_{t/2}^t\int_{\mathbb{R}^d}\ldots:= I_1+I_2.
	\end{aligned}
	\]
	We now take care of $I_1$ first. To do this we write
	$$
	I_1=\int_0^{t/2}\int_{B(x,|x-y|/2)}\ldots+\int_0^{t/2}\int_{ B(x,|x-y|/2)^c}\ldots := I_{11}+I_{12}.
	$$
	Note that for $z\in B(x,|x-y|/2)$, $|z-y|\sim |x-y|$. Applying Proposition \ref{ht prop} (ii), Theorem \ref{main 2} and using  the fact that $t-s\sim t$ for  $s\in (0,t/2)$, we can bound the term $I_{11}$ as follows:
	$$
	\begin{aligned}
	I_{11}&\lesi\int_0^{t/2}\int_{B(x,|x-y|/2)}\Big(\f{|x-\overline{x}|}{\sqrt{s}}\Big)^{\theta}\f{1}{w(B(z,\sqrt{s}))}\Big[\exp\Big(-\f{|x-z|^2}{cs}\Big)+\exp\Big(-\f{|\overline{x}-z|^2}{cs}\Big)\Big]\\
	& \ \ \ \ \ \times \f{1}{w(B(y,\sqrt{t}))}\exp\Big(-\f{|x-y|^2}{ct}\Big)\Big(1+\f{\sqrt{t}}{\rho_w(y,\mu)}\Big)^{-\theta k_0-\theta}d\pi(z)ds\\
	&\lesi\int_0^{\rho_w(x,\mu)^2}\int_{B(x,2|x-\overline{x}|)}\ldots+\int_{\rho_w(x,\mu)^2}^{t/2}\int_{B(x,2|x-\overline{x}|)}\ldots:= J_1+J_2.
	\end{aligned}
	$$
	
	Note that $\rho_w(x,\mu)\sim \rho_w(\overline{x},\mu)$ for $|x-\overline{x}|\leq \rho_w(x,\mu)$. This, together with Lemma \ref{Schwartz} and $\delta>\theta$, gives
	$$
	\begin{aligned}
	J_1&\lesi \f{1}{w(B(y,\sqrt{t}))}\exp\Big(-\f{|x-y|^2}{ct}\Big)\Big(1+\f{\sqrt{t}}{\rho_w(y,\mu)}\Big)^{-\theta k_0-\theta}\int_0^{\rho_w(x,\mu)^2}\Big(\f{|x-\overline{x}|}{\sqrt{s}}\Big)^\theta\Big(\f{\sqrt{s}}{\rho_w(x,\mu)}\Big)^{\delta}\f{ds}{s}\\
	&\lesi \Big(\f{|x-\overline{x}|}{\rho_w(x,\mu)}\Big)^\theta\f{1}{w(B(y,\sqrt{t}))}\exp\Big(-\f{|x-y|^2}{ct}\Big)\Big(1+\f{\sqrt{t}}{\rho_w(y,\mu)}\Big)^{-\theta k_0-\theta}.
	\end{aligned}
	$$
	Owing to Proposition \ref{crit},
	$$
	\begin{aligned}
	J_1&\lesi \Big(\f{|x-\overline{x}|}{\rho_w(y,\mu)}\Big)^\theta\f{1}{w(B(y,\sqrt{t}))}\exp\Big(-\f{|x-y|^2}{ct}\Big)\Big(1+\f{|x-y|}{\rho_w(y,\mu)}\Big)^{\theta k_0}\Big(1+\f{\sqrt{t}}{\rho_w(y,\mu)}\Big)^{-\theta k_0-\theta}.
	\end{aligned}
	$$
    Using the inequality 
	$$
	\Big(1+\f{|x-y|}{\rho_w(y,\mu)}\Big)^{\theta k_0}\Big(1+\f{\sqrt{t}}{\rho_w(y,\mu)}\Big)^{-\theta k_0}\lesi \Big(1+\f{|x-y|}{\sqrt{t}}\Big)^{\theta k_0},
	$$
	we obtain that
	$$
	\begin{aligned}
	J_1&\lesi \Big(\f{|x-\overline{x}|}{\rho_w(y,\mu)}\Big)^\theta\f{1}{w(B(y,\sqrt{t}))}\exp\Big(-\f{|x-y|^2}{ct}\Big)\Big(1+\f{|x-y|}{\sqrt{t}}\Big)^{\theta k_0}\Big(1+\f{\sqrt{t}}{\rho_w(y,\mu)}\Big)^{-\theta}\\
	&\lesi \Big(\f{|x-\overline{x}|}{\rho_w(y,\mu)}\Big)^\theta\f{1}{w(B(y,\sqrt{t}))}\exp\Big(-\f{|x-y|^2}{c't}\Big)\Big(1+\f{\sqrt{t}}{\rho_w(y,\mu)}\Big)^{-\theta}.
	\end{aligned}
	$$
	Similarly, by Lemma \ref{Schwartz} and $N>N_0>\delta>\theta$, we have
	$$
	\begin{aligned}
	J_2&\lesi \f{1}{w(B(y,\sqrt{t}))}\exp\Big(-\f{|x-y|^2}{ct}\Big)\Big(1+\f{\sqrt{t}}{\rho_w(y,\mu)}\Big)^{-N}\int_{\rho_w(x,\mu)^2}^{t/2}\Big(\f{|x-\overline{x}|}{\sqrt{s}}\Big)^\theta\Big(\f{\sqrt{s}}{\rho_w(x,\mu)}\Big)^{N_0}
	\f{ds}{s}\\
	&\lesi \f{1}{w(B(y,\sqrt{t}))}\exp\Big(-\f{|x-y|^2}{ct}\Big)\Big(\f{|x-\overline{x}|}{\rho_w(x,\mu)}\Big)^\theta\Big(\f{\sqrt{t}}{\rho_w(x,\mu)}\Big)^{N_0-\theta}
	\Big(1+\f{\sqrt{t}}{\rho_w(y,\mu)}\Big)^{-N}\\
	&\lesi \f{1}{w(B(y,\sqrt{t}))}\exp\Big(-\f{|x-y|^2}{ct}\Big)\Big(\f{|x-\overline{x}|}{\rho_w(x,\mu)}\Big)^\theta\Big(\f{\rho_w(y,\mu)}{\rho_w(x,\mu)}\Big)^{N_0-\theta}
	\Big(1+\f{\sqrt{t}}{\rho_w(y,\mu)}\Big)^{-N}.
	\end{aligned}
	$$
	Applying Proposition \ref{crit},
	$$
	\begin{aligned}
	J_2
	&\lesi \f{1}{w(B(y,\sqrt{t}))}\exp\Big(-\f{|x-y|^2}{ct}\Big)\Big(\f{|x-\overline{x}|}{\rho_w(y,\mu)}\Big)^\theta\Big(1+\f{|x-y|}{\rho_w(y,\mu)}\Big)^{(N_0-\theta)k_0}
	\Big(1+\f{\sqrt{t}}{\rho_w(y,\mu)}\Big)^{-N}.
	\end{aligned}
	$$
	Using the following inequality
	\[
	\Big(1+\f{|x-y|}{\rho_w(y,\mu)}\Big)^{(N_0-\theta)k_0}
	\Big(1+\f{\sqrt{t}}{\rho_w(y,\mu)}\Big)^{-(N_0-\theta)k_0}\le \Big(1+\f{|x-y|}{\sqrt t}\Big)^{(N_0-\theta)k_0},
	\]
	and taking $N=(N_0-\theta)k_0 +\theta$, we obtain
	$$
		\begin{aligned}
	J_2&\lesi \Big(\f{|x-\overline{x}|}{\rho_w(y,\mu)}\Big)^\theta\f{1}{w(B(y,\sqrt{t}))}\exp\Big(-\f{|x-y|^2}{ct}\Big)\Big(1+\f{|x-y|}{\sqrt t}\Big)^{(N_0-\theta)k_0}\Big(1+\f{\sqrt{t}}{\rho_w(y,\mu)}\Big)^{-\theta}\\
	&\lesi \Big(\f{|x-\overline{x}|}{\rho_w(y,\mu)}\Big)^\theta\f{1}{w(B(y,\sqrt{t}))}\exp\Big(-\f{|x-y|^2}{c't}\Big) \Big(1+\f{\sqrt{t}}{\rho_w(y,\mu)}\Big)^{-\theta}.
		\end{aligned}
	$$
	Consequently,
	\[
	I_{11}\lesi \Big(\f{|x-\overline{x}|}{\rho_w(y,\mu)}\Big)^\theta\f{1}{w(B(y,\sqrt{t}))}\exp\Big(-\f{|x-y|^2}{c't}\Big) \Big(1+\f{\sqrt{t}}{\rho_w(y,\mu)}\Big)^{-\theta}.
	\]
	Arguing similarly we obtain
	\[
	I_{12}\lesi \Big(\f{|x-\overline{x}|}{\rho_w(y,\mu)}\Big)^\theta\f{1}{w(B(y,\sqrt{t}))}\exp\Big(-\f{|x-y|^2}{ct}\Big)\Big(1+\f{\sqrt{t}}{\rho_w(y,\mu)}\Big)^{-\theta}.
	\]
	Taking estimates $I_{11}$ and $I_{12}$ into account we conclude that
	\[
	\begin{aligned}
	I_1&\lesi \Big(\f{|x-\overline{x}|}{\rho_w(y,\mu)}\Big)^\theta\f{1}{w(B(y,\sqrt{t}))}\exp\Big(-\f{|x-y|^2}{ct}\Big)\Big(1+\f{\sqrt{t}}{\rho_w(y,\mu)}\Big)^{-\theta}\\
	&\lesi \min\left\{\Big(\f{|x-\overline{x}|}{\sqrt{t}}\Big)^\theta,\Big(\f{|x-\overline{x}|}{\rho_w(y,\mu)}\Big)^\theta\right\}\f{1}{w(B(y,\sqrt{t}))}\exp\Big(-\f{|x-y|^2}{ct}\Big),
	\end{aligned}
	\]
	where in the last inequality we used
	\[
	\Big(\f{|x-\overline{x}|}{\rho_w(y,\mu)}\Big)^\theta\Big(1+\f{\sqrt{t}}{\rho_w(y,\mu)}\Big)^{-\theta}\le \min\left\{\Big(\f{|x-\overline{x}|}{\sqrt{t}}\Big)^\theta,\Big(\f{|x-\overline{x}|}{\rho_w(y,\mu)}\Big)^\theta\right\}.
	\]
	It remains to take care of the term $I_2$. By a change of variable we can rewrite
	\[
	I_2=\int_{0}^{t/2}\int_{\mathbb{R}^d} (h_{t-s}(x,z)-h_{t-s}(\overline{x},z))k_{s}(z,y)d\pi(z)ds.
	\]
	By Proposition \ref{ht prop} (ii), Theorem \ref{main 2} and the fact that $t-s\sim t$ for $s\in (0,t/2]$,
	\[
	\begin{aligned}
	I_{2}&\lesi\int_0^{t/2}\int_{\Ri^d}\Big(\f{|x-\overline{x}|}{\sqrt{t}}\Big)^{\theta}\f{1}{w(B(z,\sqrt{t}))}\exp\Big(-\f{|x-z|^2}{ct}\Big)\\
	& \ \ \ \ \ \ \ \ \ \ \ \times \f{1}{w(B(y,\sqrt{s}))}\exp\Big(-\f{|z-y|^2}{cs}\Big)\Big(1+\f{\sqrt{s}}{\rho_w(y,\mu)}\Big)^{-N}d\pi(z)ds\\
	&\ +\int_0^{t/2}\int_{\Ri^d}\Big(\f{|x-\overline{x}|}{\sqrt{t}}\Big)^\theta\f{1}{w(B(z,\sqrt{t}))}\exp\Big(-\f{|\overline{x}-z|^2}{ct}\Big)\\
	& \ \ \ \ \ \ \ \ \ \ \  \times \f{1}{w(B(y,\sqrt{s}))}\exp\Big(-\f{|z-y|^2}{cs}\Big)\Big(1+\f{\sqrt{s}}{\rho_w(y,\mu)}\Big)^{-N}d\pi(z)ds\\
	&=:I_{21}+I_{22},
	\end{aligned}
	\]
	where $N>0$ will be fixed later.
	
	Note that for $s\in (0,t/2]$ we have
	$$
	\exp\Big(-\f{|x-z|^2}{ct}\Big)\exp\Big(-\f{|z-y|^2}{cs}\Big)\lesi \exp\Big(-\f{|x-y|^2}{c't}\Big)\exp\Big(-\f{|z-y|^2}{c''s}\Big).
	$$
	Inserting this into the expression of $I_{21}$ we obtain
	\[
	\begin{aligned}
	I_{21}& \lesi\Big(\f{|x-\overline{x}|}{\sqrt{t}}\Big)^\theta\f{1}{w(B(x,\sqrt{t}))}\exp\Big(-\f{|x-y|^2}{c't}\Big)\\
	& \ \ \times \int_0^{t/2}\int_{\Ri^d} \f{1}{w(B(y,\sqrt{s}))}\exp\Big(-\f{|z-y|^2}{c''s}\Big)\Big(1+\f{\sqrt{s}}{\rho_w(y,\mu)}\Big)^{-N}d\pi(z)ds.
	\end{aligned}
	\]
	If $t/2>\rho_w(y,\mu)$, then by Lemma \ref{Schwartz} we have
	\[
	\begin{aligned}
	\int_0^{t/2}\int_{\Ri^d} &\f{1}{w(B(y,\sqrt{s}))}\exp\Big(-\f{|z-y|^2}{c''s}\Big)\Big(1+\f{\sqrt{s}}{\rho_w(y,\mu)}\Big)^{-N}d\pi(z)ds\\
	&\lesi \int_0^{\rho_w(y,\mu)^2}\Big(\f{\sqrt{s}}{\rho_w(y,\mu)}\Big)^{\delta}\f{ds}{s}+ \int_{\rho_w(y,\mu)^2}^\vc\Big(\f{\sqrt{s}}{\rho_w(y,\mu)}\Big)^{N_0}\Big(\f{\sqrt{s}}{\rho_w(y,\mu)}\Big)^{-N}\f{ds}{s}\\
	&\lesi 1.
	\end{aligned}
	\]
	Hence,
	\[
	\begin{aligned}
	I_{21}&
	\lesi\Big(\f{|x-\overline{x}|}{\sqrt{t}}\Big)^\theta\f{1}{w(B(x,\sqrt{t}))}\exp\Big(-\f{|x-y|^2}{c't}\Big)\\
	&
	\lesi\min\left\{\Big(\f{|x-\overline{x}|}{\sqrt{t}}\Big)^\theta,\Big(\f{|x-\overline{x}|}{\rho_w(y,\mu)}\Big)^\theta\right\}\f{1}{w(B(x,\sqrt{t}))}\exp\Big(-\f{|x-y|^2}{c't}\Big).
	\end{aligned}
	\]
	If $t/2<\rho_w(y,\mu)$, taking $N=\delta-\theta$ then by Lemma \ref{Schwartz} we obtain 	
	\[
	\begin{aligned}
	I_{21}&\lesi \Big(\f{|x-\overline{x}|}{\sqrt{t}}\Big)^\theta\f{1}{w(B(x,\sqrt{t}))}\exp\Big(-\f{|x-y|^2}{c't}\Big)\int_0^{t/2}\Big(\f{\sqrt{s}}{\rho_w(y,\mu)}\Big)^{\delta}\Big(\f{\sqrt{s}}{\rho_w(y,\mu)}\Big)^{-\delta+\theta}\f{ds}{s}\\
	&\lesi \Big(\f{|x-\overline{x}|}{\sqrt{t}}\Big)^\theta\f{1}{w(B(x,\sqrt{t}))}\exp\Big(-\f{|x-y|^2}{ct}\Big)\Big(\f{\sqrt{t}}{\rho_w(y,\mu)}\Big)^\theta\\
	&\lesi \f{1}{w(B(x,\sqrt{t}))}\exp\Big(-\f{|x-y|^2}{ct}\Big)\Big(\f{|x-\overline{x}|}{\rho_w(y,\mu)}\Big)^\theta\\
	&
	\lesi\min\left\{\Big(\f{|x-\overline{x}|}{\sqrt{t}}\Big)^\theta,\Big(\f{|x-\overline{x}|}{\rho_w(y,\mu)}\Big)^\theta\right\}\f{1}{w(B(x,\sqrt{t}))}\exp\Big(-\f{|x-y|^2}{c't}\Big).
	\end{aligned}
	\]	
	By a  similar argument, we also have
	\[
	\begin{aligned}
	I_{22}&\lesi \f{1}{w(B(\overline{x},\sqrt{t}))}\exp\Big(-\f{|\overline{x}-y|^2}{ct}\Big)\min\left\{\Big(\f{|x-\overline{x}|}{\sqrt{t}}\Big)^\theta,\Big(\f{|x-\overline{x}|}{\rho_w(y,\mu)}\Big)^\theta\right\}\\
	&\lesi \f{1}{w(B(y,\sqrt{t}))}\exp\Big(-\f{|\overline{x}-y|^2}{ct}\Big)\min\left\{\Big(\f{|x-\overline{x}|}{\sqrt{t}}\Big)^\theta,\Big(\f{|x-\overline{x}|}{\rho_w(y,\mu)}\Big)^\theta\right\}\\
	&\lesi \f{1}{w(B(y,\sqrt{t}))}\exp\Big(-\f{|x-y|^2}{ct}\Big)\min\left\{\Big(\f{|x-\overline{x}|}{\sqrt{t}}\Big)^\theta,\Big(\f{|x-\overline{x}|}{\rho_w(y,\mu)}\Big)^\theta\right\}\\
	&\lesi \f{1}{w(B(x,\sqrt{t}))}\exp\Big(-\f{|x-y|^2}{ct}\Big)\min\left\{\Big(\f{|x-\overline{x}|}{\sqrt{t}}\Big)^\theta,\Big(\f{|x-\overline{x}|}{\rho_w(y,\mu)}\Big)^\theta\right\},
	\end{aligned}
	\]
	where in the third inequality we used the fact that $|\overline x-y|\sim |x-y|$.

	This completes our proof.
\end{proof}

We are now ready to give the proof for Theorem \ref{main 2} (ii).

\begin{proof}[{\bf Proof of  Theorem \ref{main 2} (ii)}:]
	Due to the kernel bound in (i) of Theorem \ref{main 2} we may assume that $|y-\overline y|<\sqrt t/4$. We now consider 2 cases.
	
	\noindent \textbf{Case 1: $|y-\overline{y}|<|x-y|/4$.}
	
	If $|y-\overline{y}|<\rho_w(y,\mu)$, then using the estimates in Proposition \ref{Proposition: holder of qt} and Proposition \ref{ht prop} we obtain \eqref{Holder p_t(x,y)}.
	
	Otherwise, if $|y-\overline{y}|\ge \rho_w(y,\mu)$, then applying (i) of Theorem \ref{main 2},
	\begin{equation}\label{eq- h-k proof}
     |k_{t}(x,y)-k_{t}(x,\overline{y})|\leq   \f{C}{w(B(x,\sqrt{t}))}\exp\Big(-\f{|x-y|^2}{ct}\Big)\Big[\Big(\f{\rho_w(y,\mu)}{\sqrt t}\Big)^{\theta}+\Big(\f{\rho_w(\overline y,\mu)}{\sqrt t}\Big)^{\theta}\Big].
	\end{equation}
	On the other hand, by Proposition \ref{crit},
	\[
	\rho_w(\overline y,\mu)\le C\rho_w(y,\mu)\Big(1+\f{|\overline y-y|}{\rho_w(\overline y,\mu)}\Big)^{\f{k_0}{k_0+1}}.
	\]
	This, along with $|y-\overline{y}|\ge \rho_w(y,\mu)$, implies that $|y-\overline{y}|\ge \rho_w(\overline y,\mu)$. Therefore, it follows from \eqref{eq- h-k proof} that 
	\[
	 |k_{t}(x,y)-k_{t}(x,\overline{y})|\leq   \Big(\f{|y-\overline y|}{\sqrt t}\Big)^{\theta}\f{C}{w(B(x,\sqrt{t}))}\exp\Big(-\f{|x-y|^2}{ct}\Big),
	\]
	which proves \eqref{Holder p_t(x,y)}.
	
	\bigskip
	
	\noindent \textbf{Case 2: $|y-\overline{y}|\ge |x-y|/4$.}
	
	We borrow some ideas in \cite{DZ2} to write
	\[
	\begin{aligned}
	k_{t}(x,y)-k_{t}(x,\overline{y})&=\int_{\Ri^d}k_{t/2}(x,z)\Big[k_{t/2}(z,y)-k_{t/2}(z,\overline y)\Big]dw(z)\\
	&=\int_{|y-z|\ge 4|\overline y-y|}\ldots +\int_{|y-z|< 4|\overline y-y|}\ldots\\
	&=E_1 + E_2.
	\end{aligned}
	\]
	We can apply the estimate in Case 1 to dominate the term $E_1$ by
	\[
	\Big(\f{|y-\overline y|}{\sqrt t}\Big)^{\theta}\int_{\Ri^d}|k_{t/2}(x,z)|\f{C}{w(B(y,\sqrt{t}))}\exp\Big(-\f{|z-y|^2}{ct}\Big)dw(z).
	\]
	Owing the Gaussian upper bound of $k_t$ we further obtain
	\[
	\begin{aligned}
	|E_1|&\lesi \Big(\f{|y-\overline y|}{\sqrt t}\Big)^{\theta}\int_{\Ri^d}\f{1}{w(B(x,\sqrt{t}))}\exp\Big(-\f{|x-z|^2}{ct}\Big)\f{1}{w(B(y,\sqrt{t}))}\exp\Big(-\f{|z-y|^2}{ct}\Big)dw(z).
	\end{aligned}
	\]
	By using the following inequality
	\[
	\exp\Big(-\f{|x-z|^2}{ct}\Big)\exp\Big(-\f{|z-y|^2}{ct}\Big)\lesi \exp\Big(-\f{|x-y|^2}{c't}\Big)\exp\Big(-\f{|z-y|^2}{c''t}\Big),
	\]
	we arrive at
	\[
	\begin{aligned}
	|E_1|&\lesi \Big(\f{|y-\overline y|}{\sqrt t}\Big)^{\theta}\f{1}{w(B(x,\sqrt{t}))}\exp\Big(-\f{|x-y|^2}{c't}\Big)\int_{\Ri^d} \f{1}{w(B(y,\sqrt{t}))}\exp\Big(-\f{|z-y|^2}{c''t}\Big)dw(z)\\
	&\lesi \Big(\f{|y-\overline y|}{\sqrt t}\Big)^{\theta}\f{1}{w(B(x,\sqrt{t}))}\exp\Big(-\f{|x-y|^2}{c't}\Big).
	\end{aligned}
	\]
	It remains to evaluate the term $E_2$. By invoking the Gaussian upper bound of $k_t$, we have
	\[
	\begin{aligned}
	|E_2|&\lesi \int_{B(y,4|\overline y -y|)}\f{1}{w(B(x,\sqrt{t}))}\exp\Big(-\f{|x-z|^2}{ct}\Big)\f{1}{w(B(y,\sqrt{t}))}\exp\Big(-\f{|z-y|^2}{ct}\Big)dw(z)\\
	&  \ \ \ \ +\int_{B(y,4|\overline y -y|)}\f{1}{w(B(x,\sqrt{t}))}\exp\Big(-\f{|x-z|^2}{ct}\Big)\f{1}{w(B(\overline y,\sqrt{t}))}\exp\Big(-\f{|z-\overline y|^2}{ct}\Big)dw(z)\\
	&\lesi \int_{B(y,4|\overline y -y|)}\f{1}{w(B(x,\sqrt{t}))}\exp\Big(-\f{|x-y|^2}{c't}\Big)\f{1}{w(B(y,\sqrt{t}))} dw(z)\\
	&  \ \ \ \ +\int_{B(y,4|\overline y -y|)}\f{1}{w(B(x,\sqrt{t}))}\exp\Big(-\f{|x-\overline y|^2}{ct}\Big)\f{1}{w(B(\overline y,\sqrt{t}))}dw(z),
	\end{aligned}
	\]
	where in the last inequality we used
	\[
	\exp\Big(-\f{|x-z|^2}{ct}\Big)\exp\Big(-\f{|z-y|^2}{ct}\Big)\lesi \exp\Big(-\f{|x-y|^2}{c't}\Big) 
	\] 
	and
	\[
	\exp\Big(-\f{|x-z|^2}{ct}\Big)\exp\Big(-\f{|z-\overline y|^2}{ct}\Big)\lesi \exp\Big(-\f{|x-\overline y|^2}{c't}\Big). 
	\]
	Since $|\overline y-y|<\sqrt t$,  $w(B(y,\sqrt{t}))\sim w(B(y,\sqrt{t}))$  and 
	\[
	\exp\Big(-\f{|x-\overline y|^2}{ct}\Big)\sim \exp\Big(-\f{|x-y|^2}{ct}\Big).
	\] 
	We thus obtain
	\[
	\begin{aligned}
	|E_2|
	&\lesi \int_{B(y,4|\overline y -y|)}\f{1}{w(B(x,\sqrt{t}))}\exp\Big(-\f{|x-y|^2}{c't}\Big)\f{1}{w(B(y,\sqrt{t}))} dw(z)\\
	&= \f{1}{w(B(x,\sqrt{t}))}\exp\Big(-\f{|x-y|^2}{c't}\Big)\f{w(B(y,4|\overline y -y|))}{w(B(y,\sqrt{t}))}\\
	&\lesi \Big(\f{|\overline y -y|}{\sqrt t}\Big)^\beta\f{1}{w(B(x,\sqrt{t}))}\exp\Big(-\f{|x-y|^2}{c't}\Big)\\
	&\lesi \Big(\f{|\overline y -y|}{\sqrt t}\Big)^\theta\f{1}{w(B(x,\sqrt{t}))}\exp\Big(-\f{|x-y|^2}{c't}\Big),
	\end{aligned}
	\]
	where in the third inequality we used \eqref{(RD)} and in the last inequality we used $\beta\ge 2>\theta$.

	This completes our proof.
\end{proof}

\section{Maximal function characterization for Hardy spaces $h^{p,q}_{at,\rho_w}(\Ri^d,w)$}

This section is dedicated to proving Theorem \ref{mainthm 3}.
\subsection{Local Hardy spaces}
We recall the notion of atomic Hardy spaces in \cite{YZ2}.
\begin{ddefi}
	Let $p\in (\f{n}{n+1},1], q\in [1,\vc]\cap (p,\vc]$ and $\ell>0$. A function $a$ is called a local $(p,q)_\ell$-atom associated to the ball $B(x_0,r)$ if
		\begin{enumerate}[{\rm (i)}]
			\item ${\rm supp}\, a\subset B(x_0,r)$;
			\item $\|a\|_{L^q(\Ri^d)_w}\leq w(B(x_0,r))^{1/q-1/p}$;
			\item $\int a\, dw =0$ if $r<\ell$.
		\end{enumerate}
\end{ddefi}

The local Hardy spaces are defined as follows.
\begin{ddefi}\label{defnHardyspaces}
		Let $p\in (\f{n}{n+1},1], q\in [1,\vc]\cap (p,\vc]$ and $\ell>0$. 
		The local Hardy space $h^{p,q}_{\ell,at}(\Ri^d,w)$ is defined to be the completion of the set of all $f=\sum_j \lambda_ja_j$ in $L^2_w(\Ri^d)$ under the norm
		$$\|f\|_{h^{p,q}_{\ell,at}(\Ri^d,w)}=\inf\left\{\sum_{j}|\lambda_j|^p: \, f=\sum_j \lambda_ja_j\right\},
		$$
		where $\{a_j\}_{j\in \mathbb{N}}$ are local $(p,q)_\ell$-atoms and $\{\lambda_j\}_{j\in \mathbb{N}}\subset \mathbb{C}$ such that $\sum_{j}|\lambda_j|^p<\vc$.
\end{ddefi}
It was proved in \cite{YZ} that $h^{p,q}_{\ell,at}(\Ri^d,w)=h^{p,r}_{\ell,at}(\Ri^d,w)$ for all $\f{n}{n+1}<p\leq 1$ , $q,r\in [1,\vc]\cap (p,\vc]$ and $\ell>0$. For this reason, we define the local Hardy spaces $h^p_\ell(\Ri^d,w)$ with $\f{n}{n+1}<p\leq 1$ and  $\ell>0$ to be any space $h^{p,r}_{\ell,at}(\Ri^d,w)$ with $q\in [1,\vc]\cap (p,\vc]$.

We recall the following result in \cite[Theorem 2.10]{BDK}.
\begin{thrm}\label{thm1-local hardy spaces basic property}
		Let $\f{n}{n+1}<p\leq 1$ , $q\in [1,\vc)\cap (p,\vc)$ and $\ell\in \mathbb{R}$.  If $f \in h^p_\ell(\Ri^d,w)$ is supported in a ball $B$ with radius of $r_B\ge \ell$, then there exist a number $c_0$, a sequence of numbers $\{\lambda_j\}_{j\in \mathbb{N}}$, and a sequence $\{a_j\}_{j\in \mathbb{N}}$ of local  $(p,q)_\ell$ atoms  such that for each $j$, $a_j$ is supported in $c_0B$ such that $f=\sum_{j=1}^\vc \lambda_j a_j$ and
			\begin{equation}
			\label{eq-compact hardy functions}
			\|f\|^p_{h^p_\ell(\Ri^d,w)}\sim \sum_{j=1}^\vc|\lambda_j|^p.
			\end{equation}
			If $f \in h^p_\ell(\Ri^d,w)\cap C(\Ri^d)$, then the statement is also  true with $q=\vc$.
\end{thrm}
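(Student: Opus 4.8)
The plan is to deduce the statement from the grand maximal function characterisation of the local Hardy space together with a Calderón--Zygmund decomposition, arranged so that the Whitney balls automatically produce atoms localised near $B$.

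\textbf{Reduction to the maximal function.} I would first recall from \cite{YZ,YZ2} that $h^p_\ell(\Ri^d,w)$ carries the equivalent (quasi-)norm $\|f\|_{h^p_\ell(\Ri^d,w)}\sim\|\mathcal{G}_\ell f\|_{L^p_w(\Ri^d)}$, where $\mathcal{G}_\ell$ is the local grand maximal function formed from normalised bump functions supported on balls $B(x,t)$ with $0<t\le\ell$ (only small scales contribute, as is typical of local Hardy spaces). It suffices to treat $f\in h^p_\ell(\Ri^d,w)\cap L^2_w(\Ri^d)$ with $\supp f\subset B=B(x_B,r_B)$, the general case following by a routine density argument. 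The geometric point is that since $r_B\ge\ell$, every admissible bump $\varphi$ of scale $t\le\ell$ centred at a point $x$ with $|x-x_B|\ge 2r_B$ has $\supp\varphi\subset B(x,\ell)\subset B(x,r_B)$, which is disjoint from $B$; hence $\langle f,\varphi\rangle=0$, and therefore $\supp\mathcal{G}_\ell f\subset \overline{2B}$.

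\textbf{Calderón--Zygmund decomposition.} For $k\in\Zi$ set $\Omega_k=\{x:\mathcal{G}_\ell f(x)>2^k\}\subset 2B$, an open set, and take a Whitney covering $\Omega_k=\bigcup_i B_{k,i}$ by Euclidean balls with bounded overlap and radii comparable to $\dist(B_{k,i},\Omega_k^c)$, with a subordinate partition of unity $(\eta_{k,i})_i$. Put $g_k=f-\sum_i b_{k,i}$, $b_{k,i}=(f-c_{k,i})\eta_{k,i}$, where $c_{k,i}$ is chosen so that $\int b_{k,i}\,dw=0$ when $r(B_{k,i})<\ell$ and $c_{k,i}=0$ when $r(B_{k,i})\ge\ell$ — this is exactly the local modification, since on the ``large'' Whitney balls no cancellation is needed because local atoms carry no moment condition there. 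Standard estimates in the $A_\infty$ setting give $g_k\to f$ in $L^2_w$ as $k\to\vc$ and $g_k\to 0$ as $k\to-\vc$, so $f=\sum_k(g_{k+1}-g_k)$, and each $g_{k+1}-g_k$ is a sum $\sum_i A_{k,i}$ with $\supp A_{k,i}\subset CB_{k,i}$, $\|A_{k,i}\|_{L^q_w}\lesi 2^k w(CB_{k,i})^{1/q}$ and the appropriate vanishing moment. Renormalising, $A_{k,i}=\lambda_{k,i}a_{k,i}$ with $a_{k,i}$ a local $(p,q)_\ell$-atom and $|\lambda_{k,i}|\lesi 2^k w(B_{k,i})^{1/p}$. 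Since $B_{k,i}\subset\Omega_k\subset 2B$ and the Whitney dilation $C$ is absolute, all $a_{k,i}$ are supported in a fixed dilate $c_0B$.

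\textbf{Norm control, conclusion, and the continuous case.} Using the bounded overlap of the $B_{k,i}$, the doubling property of $w$ ($w\in A_2$), and summation of the distribution function,
\[
\sum_{k,i}|\lambda_{k,i}|^p\lesi\sum_k 2^{kp}\sum_i w(B_{k,i})\lesi\sum_k 2^{kp}w(\Omega_k)\lesi\int_{\Ri^d}(\mathcal{G}_\ell f)^p\,dw\sim\|f\|_{h^p_\ell(\Ri^d,w)}^p.
\]
Relabelling $\{(k,i)\}$ as $\{j\}$ yields $f=\sum_j\lambda_j a_j$ with atoms in $c_0B$ and $\sum_j|\lambda_j|^p\lesi\|f\|^p_{h^p_\ell(\Ri^d,w)}$; conversely $\|f\|^p_{h^p_\ell(\Ri^d,w)}\le\sum_j|\lambda_j|^p$ is immediate because this is one admissible atomic representation and the norm is an infimum, so \eqref{eq-compact hardy functions} follows. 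If moreover $f\in C(\Ri^d)$, then $|f|\le\mathcal{G}_\ell f$ pointwise (recovering $f$ as the limit of its bump averages), the $b_{k,i}$ are continuous, and since $CB_{k,i}$ meets $\Omega_k^c$ one gets $|f-c_{k,i}|\lesi 2^k$ on $CB_{k,i}$; hence $\|A_{k,i}\|_\vc\lesi 2^k$ and one may take $q=\vc$.

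\textbf{Main obstacle.} The delicate point is the \emph{localisation}: one must check that the Whitney balls of \emph{every} level set $\Omega_k$ lie in a single dilate $c_0B$ with $c_0$ independent of $B$ and $\ell$, which is precisely where the hypothesis $r_B\ge\ell$ enters, via $\supp\mathcal{G}_\ell f\subset 2B$; coupled with this, the entire Calderón--Zygmund machinery — convergence of the $g_k$ in $L^2_w$, the moment corrections on small Whitney balls, and the $L^q_w$ and $L^\infty$ size bounds for the $A_{k,i}$ — has to be carried out in the weighted setting using only $w\in A_\infty$.
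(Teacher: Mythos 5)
The first thing to say is that the paper contains no proof of this statement at all: Theorem \ref{thm1-local hardy spaces basic property} is quoted verbatim from \cite[Theorem 2.10]{BDK} (``We recall the following result in...''), so there is no internal argument to compare yours against. Your proposal is therefore a from-scratch reconstruction, and the route you choose --- the local grand maximal characterization of $h^p_\ell(\Ri^d,w)$ plus a Calder\'on--Zygmund decomposition on the level sets of $\mathcal{G}_\ell f$, with the moment correction switched off on Whitney balls of radius $\ge\ell$ --- is the standard and correct strategy for results of this type (it is essentially the Goldberg/Stein argument transplanted to the weighted doubling setting, and is in the spirit of what \cite{BDK} and \cite{YZ2} do). Your key geometric observation is also right and is the real content of the hypothesis $r_B\ge\ell$: since admissible bumps live at scales $\le\ell\le r_B$, one gets $\supp\mathcal{G}_\ell f\subset\overline{2B}$, hence every $\Omega_k$ and every dilated Whitney ball sits inside a fixed $c_0B$. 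The reverse inequality in \eqref{eq-compact hardy functions} being trivial from the infimum definition is also correctly noted.

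That said, as a proof the proposal leans on two blocks of machinery that are themselves theorem-sized and are not discharged. First, the equivalence $\|f\|_{h^p_\ell(\Ri^d,w)}\sim\|\mathcal{G}_\ell f\|_{L^p_w}$ for the weighted local space is a nontrivial input (it is essentially the content of \cite{YZ2} adapted to $\ell$ constant and $w\in A_2\cap RD_\beta$); citing it is legitimate but it carries most of the weight. Second, the passage from ``$g_{k+1}-g_k=\sum_i A_{k,i}$'' to ``$\|A_{k,i}\|_{L^q_w}\lesi 2^kw(CB_{k,i})^{1/q}$, renormalise'' hides the delicate part of Stein's iteration: $g_{k+1}-g_k$ is not simply $\sum_i b_{k,i}-\sum_j b_{k+1,j}$ regrouped, but involves the cross terms $b_{k+1,j}\eta_{k,i}$ and the associated re-centering constants, and it is exactly there that one must verify (a) the size bound on \emph{large} Whitney balls, where you set $c_{k,i}=0$ and so cannot bound the piece by $|f-c_{k,i}|\lesi 2^k$ via a point of $\Omega_k^c$ (the scale of the bump needed to control the average exceeds the truncation $\ell$), and (b) that the corrected pieces still satisfy the vanishing-moment condition precisely when their supporting radius is $<\ell$. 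These points are resolvable by the classical argument, but they are the crux rather than ``standard estimates''; also note that for elements of the completion the phrase ``supported in $B$'' needs the $L^2_w$-density reduction you mention to be spelled out, since in the paper's application the theorem is only ever invoked for $f\psi_j\in L^2_w(\Ri^d)$. In short: right approach, correct skeleton, but the decisive technical steps are asserted rather than proved, and the paper itself sidesteps all of this by citation.
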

\subsection{Some estimates on Hardy spaces $h^{p,q}_{at, \rho_w}$}
Let $\{B_j\}_{j\in \mathbb{N}}$ and $\{\psi_j\}_{j\in \mathbb{N}}$ be families of balls and functions in Lemma \ref{xalpha psialpha}.  From Proposition \ref{crit} there exists a $C_{p_w}$ such that 
\[
C_{p_w}^{-1}\rho_w(x,\mu)\le \rho_w(y,\mu)\le C_{p_w}\rho_w(x,\mu) \ \ \ \text{whenever $|x-y|<\rho_w(x,\mu)$}.
\]
We define $B^* = 4c_0 B$, where $c_0$ is the constant in Theorem \ref{thm1-local hardy spaces basic property}.

We first prove the following result which gives a localized maximal function estimate.

\begin{lemm}\label{Cor1: Hardy spaces}
	Let $\f{n}{n+\theta}<p\leq 1$ with $0<\theta<\min\{\delta,\gamma\}$ and $q\in (p,\vc]\cap [1,\vc]$, where $\delta$ is the constant in \eqref{(M1)} and $\gamma$ is the constant in Proposition \ref{ht prop}(ii). Then there exists a $C>0$ such that for any $0<\epsilon\leq 1$, we have
	$$
	\Big\|\sup_{0< t\leq \epsilon \rho_w(x_j,\mu)}|e^{-t\mathscr L_0}(f\psi_j )(x)|\,\Big\|^p_{L^p_w(X\backslash B^*_j)}\le C \gamma^{\theta p}\sum_{j\in \mathcal{I}_j}\|f\psi_j\|^p_{h^{p,q}_{at,\rho_w,\epsilon}(\Ri^d)}
	$$
	for all $f\in h^{p,q}_{at,\rho_w,\epsilon}(\Ri^d,w)$.
\end{lemm}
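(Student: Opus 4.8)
The plan is to reduce the estimate to a single atom and then to exploit that on $\Ri^d\setminus B_j^*$ the kernel $h_t$ is evaluated at points whose distance from $x_j$ is $\ge 4c_0\rho_j$ while the time variable only ranges over $\sqrt t\le\epsilon\rho_j$, so that the Gaussian factor carries an exponentially small factor $\exp(-c/\epsilon^2)$ which dominates every polynomial power of $1/\epsilon$.

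First I would set $g=f\psi_j$, which by Lemma \ref{xalpha psialpha}(iii) is supported in $B_j=B(x_j,\rho_j)$, and note that $\rho_w(\cdot,\mu)\sim\rho_j$ on $B_j$ by Proposition \ref{crit}(iii); hence the restriction of $h^{p,q}_{at,\rho_w,\epsilon}(\Ri^d,w)$ to functions supported on $B_j$ is, with equivalent norms, a local atomic Hardy space of scale $\ell\sim\epsilon\rho_j$, and Theorem \ref{thm1-local hardy spaces basic property} provides a decomposition $g=\sum_k\lambda_k a_k$ in $L^2_w(\Ri^d)$ with each $a_k$ supported in $B(y_k,r_k)\subset c_0B_j$, $\|a_k\|_{L^q_w(\Ri^d)}\le w(B(y_k,r_k))^{1/q-1/p}$, $\int a_k\,dw=0$ whenever $r_k<\epsilon\rho_j/4$, and $\sum_k|\lambda_k|^p\lesssim\|g\|_{h^{p,q}_{at,\rho_w,\epsilon}(\Ri^d,w)}^p$. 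Since $u\mapsto\sup_{t>0}|e^{-tL_0}u|$ is pointwise dominated by the weighted Hardy--Littlewood maximal operator (via Proposition \ref{ht prop}(i) and \eqref{doubling from w}), hence bounded on $L^2_w(\Ri^d)$, the series $\sum_k\lambda_k e^{-tL_0}a_k$ can be treated termwise, and by the $p$-subadditivity ($p\le1$) of $u\mapsto(\sup_t|e^{-tL_0}u|)^p$ the lemma reduces to the uniform per-atom bound
\[
\int_{\Ri^d\setminus B_j^*}\Big(\sup_{\sqrt t\le\epsilon\rho_j}|e^{-tL_0}a(x)|\Big)^p\,dw(x)\lesssim\epsilon^{\theta p},
\]
with constant independent of $j$, of $\epsilon\in(0,1]$ and of the atom $a$.

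Then, for a fixed atom $a$ supported in $B(y_0,r_0)\subset c_0B_j$ and $x\notin B_j^*=B(x_j,4c_0\rho_j)$, I would write $R=|x-x_j|\ge4c_0\rho_j$ and use that $|x-y|\sim|x-y_0|\sim R$ on $\supp a$ while $\sqrt t\le\epsilon\rho_j\le\rho_j<R$. If $r_0\ge\epsilon\rho_j/4$ I bound $|e^{-tL_0}a(x)|\lesssim w(B(x,\sqrt t))^{-1}w(B(y_0,r_0))^{1-1/p}\exp(-cR^2/t)$ from Proposition \ref{ht prop}(i) and the size of $a$; if $r_0<\epsilon\rho_j/4$ I invoke $\int a\,dw=0$ and the Hölder estimate Proposition \ref{ht prop}(ii), gaining the extra factor $(r_0/\sqrt t)^\gamma$. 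In both cases I replace $w(B(x,\sqrt t))^{-1}$ by $(R/\sqrt t)^{n}w(B(x,R))^{-1}$ via \eqref{doubling from w}, take the supremum over $\sqrt t\le\epsilon\rho_j$ (attained at the endpoint since $R/\sqrt t$ is large), and split $\exp(-cR^2/(\epsilon^2\rho_j^2))\le\exp(-cR^2/(4\rho_j^2))\exp(-4cc_0^2/\epsilon^2)$. All remaining polynomial factors $(R/\rho_j)^n$, $\epsilon^{-n-\gamma}$ and the atom normalization $w(B(y_0,r_0))^{1-1/p}$ are absorbed by $\exp(-4cc_0^2/\epsilon^2)$, giving, for $x\notin B_j^*$,
\[
\sup_{\sqrt t\le\epsilon\rho_j}|e^{-tL_0}a(x)|\lesssim\epsilon^{\theta}\,w(B_j)^{1-1/p}\,w(B(x,R))^{-1}\,\exp(-cR^2/\rho_j^2).
\]
Raising to the power $p$ and integrating over $\Ri^d\setminus B_j^*$ by an annular decomposition $|x-x_j|\sim2^k\rho_j$ together with \eqref{doubling from w} gives $\int\exp(-cpR^2/\rho_j^2)w(B(x,R))^{-p}\,dw(x)\lesssim w(B_j)^{1-p}$ (using $1-p\ge0$), whence the per-atom bound; summing $|\lambda_k|^p$ times it over $k$ and taking the infimum over the decompositions of $g$ finishes the proof.

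The main obstacle is the regime of \emph{small} atoms, $r_0\ll\epsilon\rho_j$: there $w(B(y_0,r_0))^{1-1/p}$ blows up like a negative power of $r_0$, so the moment condition is unavoidable, and the delicate point is to verify that the cancellation gain $(r_0/\sqrt t)^\gamma$ combined with $w(B(y_0,r_0))\gtrsim(r_0/\rho_j)^{n}w(B_j)$ (from \eqref{doubling from w} and Proposition \ref{crit}(iii)) produces the factor $(r_0/\rho_j)^{\gamma-n(1/p-1)}$, which is a \emph{nonnegative} power of $r_0/\rho_j\le1$ precisely because $p>\frac{n}{n+\theta}$ forces $p>\frac{n}{n+\gamma}$ (with a little room to absorb the slack between $n$ and the actual doubling exponent of $w$) — this is the only place the lower restriction on $p$ enters. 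A secondary, more routine, step requiring care is the identification of the localized $\rho_w$-atomic space on $B_j$ with a local Hardy space of scale $\sim\epsilon\rho_j$, so that Theorem \ref{thm1-local hardy spaces basic property} applies to decompose $f\psi_j$ into atoms supported in a fixed dilate of $B_j$ with cancellation threshold $\sim\epsilon\rho_j$.
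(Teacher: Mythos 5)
Your overall strategy — decompose $f\psi_j$ into compactly supported local atoms via Theorem~\ref{thm1-local hardy spaces basic property}, prove a per-atom bound by exploiting that on $\Ri^d\setminus B_j^*$ the Gaussian factor carries an $\exp(-c/\epsilon^2)$ superpolynomial gain, and use cancellation for small atoms — is a genuine (and essentially computable) route, and it differs markedly from the paper, which disposes of the estimate in one line by citing Corollary~3.4 of \cite{BDK} and then summing over $j$ using the bounded overlap of the $B_j$. The computational heart of your argument is sound: with $R=|x-x_j|\gtrsim\rho_j$ and $\sqrt t\le\epsilon\rho_j$ the split $\exp(-cR^2/(\epsilon^2\rho_j^2))\lesssim\exp(-cR^2/(2\rho_j^2))\exp(-c'/\epsilon^2)$ does absorb all $\epsilon$-independent powers, and for $r_k\ll\epsilon\rho_j$ the extra factor $(r_k/\sqrt t)^\gamma$ combined with $w(B(y_k,r_k))\gtrsim(r_k/\rho_j)^{n+\eta}w(B_j)$ does yield a nonnegative power of $r_k/\rho_j$ once $p>\tfrac{n}{n+\theta}$ and $\theta<\gamma$, which is exactly where the lower bound on $p$ enters.

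However, there is a real gap in the reduction. You assert, without argument, that for $g=f\psi_j$ supported in $B_j$ one has $\|g\|_{h^{p}_{\ell}(\Ri^d,w)}\lesssim\|g\|_{h^{p,q}_{at,\rho_w,\epsilon}(\Ri^d,w)}$ with $\ell\sim\epsilon\rho_j$, so that Theorem~\ref{thm1-local hardy spaces basic property} can be applied with $\sum_k|\lambda_k|^p\lesssim\|g\|^p_{h^{p,q}_{at,\rho_w,\epsilon}}$. This is precisely the nontrivial step. A $(p,q,\rho_w,\epsilon)$-representation $g=\sum\lambda_ka_k$ realizing the infimum is under no obligation to use atoms supported near $B_j$: atoms far from $B_j$ may appear and cancel one another, and their cancellation thresholds $\epsilon\rho_w(\cdot,\mu)/4$ are tied to their own centers, not to $\rho_j$. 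In particular a $(p,q,\rho_w,\epsilon)$-atom $a$ centered at $x_0$ with $\rho_w(x_0,\mu)\ll\rho_j$ and $\epsilon\rho_w(x_0,\mu)/4\le r<\epsilon\rho_j$ carries no vanishing moment yet fails to be a local $(p,q)_{\epsilon\rho_j}$-atom, so the comparison of the two norms is not a pointwise remark about atoms. Handling this localization is exactly the content of \cite[Corollary~3.4]{BDK} that the paper invokes, and the appearance of $\sum_{i\in\mathcal{I}_j}\|f\psi_i\|^p$ on the right-hand side of the lemma (rather than the single term $\|f\psi_j\|^p$ your per-atom argument would produce) is a telltale sign that the naive norm equivalence is too strong. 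To close the gap you would either need to prove the localization lemma itself, or rework the reduction to start from an arbitrary $(p,q,\rho_w,\epsilon)$-representation of $f$ (not of $f\psi_j$), retain only the atoms whose supports meet $B_j$ — for which $\rho_w(z_k,\mu)\sim\rho_j$ by Proposition~\ref{crit}(iii) and which therefore do sit inside a fixed dilate of $B_j$ with the right cancellation threshold — and account for the multiplication by $\psi_j$, which is where the neighboring indices $i\in\mathcal{I}_j$ naturally enter.

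A secondary, minor caution: in the small-atom estimate the gain $(r_k/\sqrt t)^\gamma$ exceeds $1$ when $\sqrt t<r_k$, so the supremum in $t$ is not simply attained at the endpoint $\sqrt t=\epsilon\rho_j$; one must first trade a slice of the Gaussian for $(\sqrt t/R)^M$ with $M$ large so that, after collecting powers, the exponent of $\sqrt t$ is positive and the supremum moves to the endpoint. Your sketch glosses over this, but it does work out with a little extra bookkeeping, and it does not change the conclusion.
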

\begin{proof}
	From Corollary 3.4 in \cite{BDK}, we have
	\[
	\Big\|\sup_{0< t\leq \epsilon \rho_w(x_j,\mu)}|e^{-t\mathscr L_0}(f\psi_j )(x)|\,\Big\|^p_{L^p_w(X\backslash B^*_j)}\le C \gamma^{\theta p}\sum_{i\in \mathcal{I}_j}\|f\psi_i\|^p_{h^{p,q}_{at,\rho_w,\epsilon}(\Ri^d)}
	\]
	for each $j\in \mathbb{N}$, where $\mathcal{I}_j=\{i\in \mathbb{N}: B_i\cap B_j\ne \emptyset\}$. From Proposition \ref{xalpha psialpha}, it is easy to see that the cardinality of $\mathcal{I}_j$ is uniformly bounded by a constant for every $j\in \mathbb{N}$. Therefore, summing the above inequality for all $j\in \mathbb{N}$ we obtain the desired estimate.
	
	This completes our proof.
\end{proof}

The following two results are just direct consequences of Lemma 3.5 and Theorem 3.1 in \cite{BDK}.
\begin{lemm}\label{Lem2: Hardy spaces}
	Let $\f{n}{n+\theta}<p\leq 1$ with $0<\theta<\min\{\delta,\gamma\}$ and $q\in (p,\vc]\cap [1,\vc]$. Then,  for any $0<\epsilon\leq 1$, we have
	\begin{equation}\label{eq1-Lem2Hardyspaces}
	\Big\|\sum_{j\in \mathbb N}\sup_{0< t\leq [\epsilon \rho_w(x,\mu)]^2}|\psi_j(x)e^{-t\mathscr L_0}f(x) -e^{-t\mathscr L_0}(f\psi_j )(x)|\,\Big\|^p_{L^p_w(\Ri^d)}\lesi \epsilon^{\theta p}\|f\|^p_{h^{p,q}_{at,\rho_w,\epsilon}(\Ri^d,w)}
	\end{equation}
	for all $f\in h^{p,q}_{at,\rho_w,\epsilon}(\Ri^d,w)$.
\end{lemm}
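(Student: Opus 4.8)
The plan is to reduce \eqref{eq1-Lem2Hardyspaces} to the single-scale commutator estimate \cite[Lemma 3.5]{BDK}, applied separately to each function $\psi_j$ from the partition of unity of Lemma \ref{xalpha psialpha}, and then to sum over $j$ by exploiting the bounded overlap of the balls $B_j=B(x_j,\rho_j)$. Since $h_t(\cdot,\cdot)$ is the kernel of $e^{-tL_0}$, the starting point is the identity
\[
\psi_j(x)\,e^{-tL_0}f(x)-e^{-tL_0}(f\psi_j)(x)=\int_{\Ri^d}h_t(x,y)\big(\psi_j(x)-\psi_j(y)\big)f(y)\,dw(y).
\]
Because $\supp\psi_j\subset B_j$, $0\le\psi_j\le1$ and $|\nabla\psi_j|\lesi 1/\rho_j$ (Lemma \ref{xalpha psialpha} (iii)--(iv)), the weight $\psi_j(x)-\psi_j(y)$ vanishes unless $x$ or $y$ lies in $B_j$ and is otherwise bounded by $\min\{1,|x-y|/\rho_j\}$; together with the Gaussian bound and the H\"older continuity of $h_t$ from Proposition \ref{ht prop} (i)--(ii) and the time restriction $t\le[\epsilon\rho_j]^2$, this places us exactly in the hypotheses of \cite[Lemma 3.5]{BDK}. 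That lemma yields, for each $j$,
\[
\Big\|\sup_{0<t\le[\epsilon\rho_j]^2}\big|\psi_j e^{-tL_0}f-e^{-tL_0}(f\psi_j)\big|\Big\|_{L^p_w(\Ri^d)}^p\lesi \epsilon^{\theta p}\sum_{i\in\mathcal I_j}\|f\psi_i\|_{h^{p,q}_{at,\rho_w,\epsilon}(\Ri^d,w)}^p,
\]
with $\mathcal I_j=\{i\in\Ni:B_i\cap B_j\ne\emptyset\}$, exactly as in the proof of Lemma \ref{Cor1: Hardy spaces}.

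Next I would sum this in $j$. Since $p\le1$ one has $\big\|\sum_j g_j\big\|_{L^p_w}^p\le\sum_j\|g_j\|_{L^p_w}^p$ with $g_j=\sup_{0<t\le[\epsilon\rho_j]^2}|\psi_j e^{-tL_0}f-e^{-tL_0}(f\psi_j)|$, so adding the per-$j$ bounds produces a right-hand side $\lesi\epsilon^{\theta p}\sum_j\sum_{i\in\mathcal I_j}\|f\psi_i\|^p$. By Lemma \ref{xalpha psialpha} (ii) the sets $\mathcal I_j$ have uniformly bounded cardinality and each index appears in at most boundedly many of them, hence $\sum_j\sum_{i\in\mathcal I_j}\|f\psi_i\|^p\lesi\sum_i\|f\psi_i\|^p$, and by \cite[Theorem 3.1]{BDK} (the partition-of-unity decomposition of $h^{p,q}_{at,\rho_w,\epsilon}$) this is $\lesi\|f\|_{h^{p,q}_{at,\rho_w,\epsilon}(\Ri^d,w)}^p$. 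Finally, on a fixed dilate of $B_j$ — the region where the $j$-th commutator is effectively concentrated, off which the Gaussian factor of $h_t$ provides decay — one has $\rho_w(x,\mu)\sim\rho_j$ by Proposition \ref{crit} (iii); therefore replacing the running cut-off $t\le[\epsilon\rho_w(x,\mu)]^2$ appearing in \eqref{eq1-Lem2Hardyspaces} by $t\le[\epsilon\rho_j]^2$ in the $j$-th term costs only a constant, which is absorbed into $c$ in the Gaussian factor. This gives \eqref{eq1-Lem2Hardyspaces}.

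The main obstacle here is the bookkeeping of the summation rather than any genuinely new estimate: one must verify that the $\epsilon^{\theta p}$ gain survives the sum over the infinitely many indices $j$, and that the commutator $\psi_j e^{-tL_0}f-e^{-tL_0}(f\psi_j)$ is localized sharply enough that only the finitely many neighbours $\{f\psi_i\}_{i\in\mathcal I_j}$ enter, with the correct local atomic norms. Both of these points are precisely what \cite[Lemma 3.5]{BDK} and \cite[Theorem 3.1]{BDK} supply, and the passage from the per-$j$ estimate to the global one is identical in spirit to the argument already used for Lemma \ref{Cor1: Hardy spaces}.
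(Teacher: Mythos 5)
The paper itself gives no proof of this lemma: it simply asserts, just before stating it, that both this lemma and Theorem \ref{thm- H rho and H rad} ``are just direct consequences of Lemma 3.5 and Theorem 3.1 in \cite{BDK}.'' Your reconstruction therefore does not contradict the paper's method — you invoke the same two ingredients from \cite{BDK} and follow the same localize-then-sum template that the paper uses for the companion Lemma \ref{Cor1: Hardy spaces}. The bookkeeping for $p\le 1$, the bounded overlap of the $\mathcal I_j$'s, and the appeal to \cite[Theorem 3.1]{BDK} for $\sum_i\|f\psi_i\|^p\lesssim\|f\|^p$ are all fine.

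The step you wave through is the replacement of the $x$-dependent truncation $t\le[\epsilon\rho_w(x,\mu)]^2$ by the $j$-dependent truncation $t\le[\epsilon\rho_j]^2$. On a fixed dilate $\lambda B_j$ you are correct that $\rho_w(x,\mu)\sim\rho_j$ by Proposition \ref{crit}(iii), so the two cut-offs are comparable there. But off $\lambda B_j$ (where the $j$-th summand equals $|e^{-tL_0}(f\psi_j)(x)|$ since $\psi_j(x)=0$), Proposition \ref{crit}(iv) only gives $\rho_w(x,\mu)\lesssim\rho_j(1+|x-x_j|/\rho_j)^{k_0/(k_0+1)}$, which grows with $|x-x_j|$, so the $x$-dependent cutoff genuinely allows larger $t$ than $[\epsilon\rho_j]^2$. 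Your comment that the excess ``costs only a constant, which is absorbed into $c$ in the Gaussian factor'' glosses this over: one must actually check, using the sub-linear growth of $\rho_w$, that $|x-x_j|^2/t\gtrsim\epsilon^{-2}(|x-x_j|/\rho_j)^{2/(k_0+1)}$ under the $x$-dependent cutoff and that this decay is strong enough to make the $L^p_w$-tail summable with the $\epsilon^{\theta p}$ gain intact (essentially the content of the off-ball estimate \cite[Corollary 3.4]{BDK} used in Lemma \ref{Cor1: Hardy spaces}, but with the $x$-running cutoff). Also be aware that you are attributing a specific per-$j$ form to \cite[Lemma 3.5]{BDK}; the paper's blanket ``direct consequence'' phrasing suggests that \cite[Lemma 3.5]{BDK} may already be stated with the $x$-dependent truncation, in which case the cutoff change you are improvising is handled for you there and your extra argument is redundant. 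Either way, the issue should be acknowledged and either cited away or carried out in detail rather than asserted.
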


\begin{thrm}\label{thm- H rho and H rad}
	Let $\f{n}{n+\theta}<p\leq 1$ with $0<\theta<\min\{\delta,\gamma\}$  and $q\in [1,\vc]\cap (p,\vc]$. Then, for any $\displaystyle f\in L^2_w(\Ri^d)$,  we have
	\[
	\Big\|\sup_{0<t<\rho_w(x,\mu)^2}|e^{-t\mathscr L_0}f(x)| \Big\|_{L^p_w(\Ri^d)}\sim \|f\|_{h^{p,q}_{at, \rho_w}(\Ri^d,w)}.
	\]
	
\end{thrm}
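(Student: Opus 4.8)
The plan is to prove the two inequalities separately, in each case adapting to the present weighted degenerate framework the arguments of \cite{BDK} for Euclidean generalized Schr\"odinger operators. The organizing device in both directions is the partition of unity $\{(\psi_j,B_j)\}_{j\in\Ni}$ of Lemma \ref{xalpha psialpha}: since $\rho_w(\cdot,\mu)$ is comparable to the constant $\rho_j=\rho_w(x_j,\mu)$ on each $B_j$ by Proposition \ref{crit}(iii), localizing $f$ to the $B_j$ reduces matters to the local Hardy space theory $h^p_\ell(\Ri^d,w)$ recalled in the preceding subsection.

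\emph{Step 1 (the bound $\lesssim$).} By $p\le 1$ and the sublinearity of the maximal operator it suffices to show $\|\sup_{0<t<\rho_w(\cdot,\mu)^2}|e^{-tL_0}a|\|_{L^p_w}^p\lesssim 1$ uniformly over all $(p,q,\rho_w)$-atoms $a$ associated with a ball $B=B(x_0,r)$, $r\le\rho_w(x_0,\mu)$. On $2B$ one uses that, by the Gaussian bound of Proposition \ref{ht prop}(i), the maximal operator is of weak type $(q,q)$ with respect to $w$ for every $q\ge1$; since $p<q$, Hölder's inequality together with $\|a\|_{L^q_w}\le w(B)^{1/q-1/p}$ and the doubling \eqref{doubling from w} then yields $\int_{2B}(\sup_t|e^{-tL_0}a|)^p\,dw\lesssim w(2B)^{1-p/q}w(B)^{p/q-1}\lesssim 1$. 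Outside $2B$ one sums over the annuli $S_j(B)$, $j\ge 2$, distinguishing two subcases. If $r<\rho_w(x_0,\mu)/4$ then $\int a\,dw=0$, so the Hölder regularity of the kernel (Proposition \ref{ht prop}(ii)) gives $|e^{-tL_0}a(x)|\lesssim(r/\sqrt t)^\gamma w(B(x,\sqrt t))^{-1}e^{-c4^jr^2/t}\|a\|_{L^1_w}$; taking $\sup_t$, using $\|a\|_{L^1_w}\le w(B)^{1-1/p}$ and \eqref{doubling from w}, one gets $\int_{S_j(B)}(\sup_t|e^{-tL_0}a|)^p\,dw\lesssim 2^{-j((\gamma+n)p-n)}$, which is summable precisely because $p>n/(n+\theta)$ and $\theta\le\gamma$. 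If $r\ge\rho_w(x_0,\mu)/4$ there is no moment condition, and the decay must be extracted from the cutoff $t<\rho_w(x,\mu)^2$: by the two-sided polynomial comparison of $m_w$ in Proposition \ref{crit}(iv) one has $\sqrt t\lesssim 2^{jk_0/(k_0+1)}r$ for $x\in S_j(B)$, hence $|x-y|^2/t\gtrsim 4^{j/(k_0+1)}$ for $y\in B$, and this stretched-exponential decay again beats the volume growth of $w(2^jB)$. Summing over $j$ and over the atomic decomposition of $f$ finishes this direction.

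\emph{Step 2 (the bound $\gtrsim$).} Given $f\in L^2_w(\Ri^d)$ with $\sup_{0<t<\rho_w(\cdot,\mu)^2}|e^{-tL_0}f|\in L^p_w$, write $f=\sum_j\psi_j f$. For each $j$ one estimates the local maximal function of $\psi_j f$ at scales $t\le(\epsilon\rho_j)^2$: on $B^*_j$ one writes $e^{-tL_0}(\psi_j f)=\psi_j e^{-tL_0}f-\big(\psi_j e^{-tL_0}f-e^{-tL_0}(\psi_j f)\big)$, bounds the first summand pointwise by $\sup_{0<t<\rho_w(\cdot,\mu)^2}|e^{-tL_0}f|$ (using $0\le\psi_j\le1$ and $\rho_j\sim\rho_w$ on $B_j$) and the commutator term by Lemma \ref{Lem2: Hardy spaces}, while off $B^*_j$ one invokes Lemma \ref{Cor1: Hardy spaces}; summing over $j$ with the finite-overlap bound of Lemma \ref{xalpha psialpha}(ii) gives $\sum_j\|\psi_j f\|_{h^p_{\rho_j}(\Ri^d,w)}^p\lesssim\|\sup_t|e^{-tL_0}f|\|_{L^p_w}^p$ via the maximal characterization of the local Hardy spaces of \cite{YZ}. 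The atomic decomposition of each $\psi_j f$ provided by Theorem \ref{thm1-local hardy spaces basic property} consists of local $(p,q)_{\rho_j}$-atoms supported in $c_0B_j$; since each is supported in a ball of radius $\lesssim\rho_j\sim\rho_w$ there, it is (up to a fixed constant absorbed into the coefficients) a $(p,q,\rho_w)$-atom in the sense of Definition \ref{def: rho atoms}. Concatenating these decompositions yields an atomic $(p,q,\rho_w)$-representation of $f$ converging in $L^2_w(\Ri^d)$ with $\|f\|_{h^{p,q}_{at,\rho_w}(\Ri^d,w)}^p\lesssim\sum_j\|\psi_j f\|_{h^p_{\rho_j}}^p\lesssim\|\sup_t|e^{-tL_0}f|\|_{L^p_w}^p$.

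\emph{Main obstacle.} The delicate point is the far-field estimate in Step 1 in the non-cancellation case $r\sim\rho_w(x_0,\mu)$, where, lacking any moment condition, one must produce the decay in $j$ solely from the constraint $t<\rho_w(x,\mu)^2$; this forces careful use of the quantitative comparison of the critical function along $x\mapsto x_0$ in Proposition \ref{crit}(iv) together with precise bookkeeping of the volume factors $w(B(x,\sqrt t))$ via \eqref{doubling from w} and \eqref{(RD)}. By contrast the localization in Step 2 is essentially routine once Lemmas \ref{Cor1: Hardy spaces} and \ref{Lem2: Hardy spaces} are available, as they already package the needed commutator and off-diagonal bounds.
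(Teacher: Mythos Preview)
Your proposal is correct and follows essentially the same route as the paper: the paper does not give a self-contained proof of this theorem but simply records it as a direct consequence of Theorem~3.1 in \cite{BDK}, and what you have sketched is precisely (an outline of) that argument transported to the weighted doubling setting $(\Ri^d,dw)$, using the partition of unity of Lemma~\ref{xalpha psialpha} and Proposition~\ref{crit} in place of their abstract critical-function hypotheses. One small point worth making explicit in your Step~2: the commutator and off-diagonal bounds from Lemmas~\ref{Lem2: Hardy spaces} and~\ref{Cor1: Hardy spaces} come with a factor $\epsilon^{\theta p}$ multiplying $\|f\|_{h^{p,q}_{at,\rho_w,\epsilon}}^p$ (respectively $\sum_j\|\psi_j f\|^p$) on the right, so the passage to $\sum_j\|\psi_j f\|^p\lesssim\|\sup_t|e^{-tL_0}f|\|_{L^p_w}^p$ requires an absorption step for $\epsilon$ small enough, which you allude to via the parameter $\epsilon$ but do not spell out.
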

\subsection{Proof of Theorem \ref{mainthm 3}}
We now give the proof for Theorem \ref{mainthm 3}. In order to do this, we split the proof into 2 steps.

\noindent\textbf{Step 1: $h^{p,q}_{at,\rho_w}(\Ri^d,w)\hookrightarrow H^p_{L}(\Ri^d,w)$.}

\noindent\textbf{Step 2: $ H^p_{L}(\Ri^d,w)\hookrightarrow h^{p,q}_{at,\rho_w}(\Ri^d,w)$.}

\medskip	
\begin{proof}[{\bf Proof of Step 1}.]	
	We first prove the continuous embedding $h^{p,q}_{at,\rho}(\Ri^d,w)\hookrightarrow H^p_{L}(\Ri^d,w)$. Since the space $L^2(\Ri^d)$ is dense in both $h^{p,q}_{at,\rho_w}(\Ri^d,w)$ and $H^p_{L}(\Ri^d,w)$,  it suffices to show that  $h^{p,q}_{at,\rho_w}(\Ri^d)\cap  H^p_{L}(\Ri^d,w)$. Since $\mathcal M_L$ is dominated by the Hardy--Littlewood maximal function $\mathcal{M}$ (see for example \cite{DM}), $\mathcal M_L$ is bounded on $L^2_w(\Ri^d)$. Therefore, it suffices to prove that 
	\begin{align}\label{maximal atoms}
	\|\mathcal M_L a\|^p_{L^p}\leq C
	\end{align}
	for all $(p,q,\rho)$-atoms associated to balls $B=B(x_0,r)$.
	
	To prove \eqref{maximal atoms}, we first write
	$$
	\|\mathcal M_L a\|^p_{L^p_w}\leq \|\mathcal M_L a\|^p_{L^p_w(4B)}+\|\mathcal M_L a\|^p_{L^p_w(\Ri^d\backslash 4B)}: =I_1+I_2.
	$$
	Using  H\"older's inequality and the $L^q_w$-boundedness of $\mathcal{M}_L$ we can dominate $I_1$ by a constant. So, it remains to consider the contribution of $I_2$. To do this, we consider two cases.
	
	\textbf{Case 1: $\rho_w(x_0,\mu)/4 \leq r\leq \rho_w(x_0,\mu)$}.
	
	Using Theorem \ref{main 2} with $N=1$, we have
	\[
	\begin{aligned}
	I_2&\lesi \int_{\Ri^d\backslash 4B}\sup_{t>0}\Big[\int_B \f{1}{w(B(y,\sqrt{t}))}\exp\Big(-\f{|x-y|^2}{ct}\Big)\Big(1+\f{\sqrt t}{\rho_w(y,\mu)}\Big)^{-1}|a(y)|dw(y)\Big]^pdw(x)\\
	&\lesi \int_{\Ri^d\backslash 4B}\sup_{t>0}\Big[\int_B \f{1}{w(B(y,|x-y|))}\exp\Big(-\f{|x-y|^2}{ct}\Big)\Big(1+\f{\sqrt t}{\rho_w(y,\mu)}\Big)^{-1}|a(y)|dw(y)\Big]^pdw(x).
	\end{aligned}
	\]
	
	Since $y\in B(x_0,r)$ with $r\sim \rho_w(x_0,\mu)$, by Proposition \ref{crit} $\rho_w(y,\mu)\sim r$. Hence,
	\[
	\exp\Big(-\f{|x-y|^2}{ct}\Big)\Big(1+\f{\sqrt t}{\rho_w(y,\mu)}\Big)^{-1}\lesi \Big(\f{r}{|x-y|}\Big).
	\]
	Consequently,
	\[
	\begin{aligned}
	I_2&\lesi \int_{\Ri^d\backslash 4B} \Big[\int_B \f{1}{w(B(y,|x-y|))}\f{r}{|x-y|} |a(y)|dw(y)\Big]^pdw(x)\\
	&\sim \int_{\Ri^d\backslash 4B} \Big[\int_B \f{1}{w(B(x_0,|x-y_0|))} \f{r}{|x-y_0|} |a(y)|dw(y)\Big]^pdw(x)\\
	&\lesi \|a\|_{L^1}^p\int_{\Ri^d\backslash 4B} \Big[ \f{1}{w(B(x_0,|x-y_0|))} \f{r}{|x-y_0|}  \Big]^pdw(x)\\
	&\lesi w(B)^{p-1}\int_{\Ri^d\backslash 4B} \Big[ \f{1}{w(B(x_0,|x-y_0|))} \f{r}{|x-y_0|}  \Big]^pdw(x).
	\end{aligned}
	\]
	By a simple calculation, we come up with
	\[
	\int_{\Ri^d\backslash 4B} \Big[ \f{1}{w(B(x_0,|x-y_0|))} \f{r}{|x-y_0|}  \Big]^pdw(x)\lesi w(B)^{1-p}
	\]
	as long as $\f{n}{n+1}<p\le 1$.
	
	It follows that $I_2\lesi 1$. Hence, \eqref{maximal atoms} is proved.

	\textbf{Case 2: $r<\rho_w(x_0,\mu)/4$}.
	
	Observe that
	\[
	\begin{aligned}
	I_2&\lesi \int_{\Ri^d\backslash 4B}\sup_{0<t\leq 4r^2} \Big|\int_B k_t(x,y)a(y)dw(y)\Big|^pdw(x)+ \int_{\Ri^d\backslash 4B}\sup_{ t\ge 4r^2}\Big|\int_Bk_t(x,y)a(y)dw(y)\Big|^pdw(x)\\
	&=I_{21}+I_{22}.
	\end{aligned}
	\]
	By Theorem \ref{main 2},
	\[
	\begin{aligned}
	I_{21}&\lesi \int_{\Ri^d\backslash 4B}\sup_{0<t<4r^2}\Big[\int_B \f{1}{w(B(y,|x-y|))} \f{t}{|x-y|}|a(y)|dw(y)\Big]^pdw(x).
	\end{aligned}
	\]

	Arguing similarly to the estimate of $I_2$ in Case 1, we have 
	\[
	I_{21}\lesi 1.
	\] 
	
	To take care of $I_{22}$ we use the cancellation property of $a$ to arrive at
	\begin{equation*}\label{eqI22}
	I_{22}=\int_{\Ri^d\backslash 4B}\sup_{ t\ge 4r^2} \Big|\int_B [k_t(x,y)-{p}_t(x,x_0)]a(y)dw(y)\Big|^pdw(x).
	\end{equation*}
	
	Owing to Theorem \ref{main 2},
	\begin{equation}\label{eqI22}
	\begin{aligned}
	I_{22}&\lesi \int_{\Ri^d\backslash 4B}\sup_{ t\ge 4r^2} \Big|\int_B \Big(\f{|y-x_0|}{\sqrt t}\Big)^\theta \f{1}{w(B(y,\sqrt t))}\exp\Big(-\f{|x-y|^2}{ct}\Big)|a(y)|dw(y)\Big|^pdw(x)\\
	&\lesi \int_{\Ri^d\backslash 4B} \Big|\int_B \Big(\f{|y-x_0|}{|x-y|}\Big)^\theta \f{1}{w(B(y,|x-y|))} |a(y)|dw(y)\Big|^pdw(x)\\
	&\lesi \int_{\Ri^d\backslash 4B} \Big|\int_B \Big(\f{r}{|x-x_0|}\Big)^\theta \f{1}{w(B(x_0,|x-x_0|))} |a(y)|dw(y)\Big|^pdw(x).
	\end{aligned}
	\end{equation}
	At this stage, employing the argument used in the estimate of $I_2$ in Case 1, we also obtain 
	\[
	I_{22}\lesi 1,
	\]
	provided that $p>n/(n+\theta)$. 
	
	Therefore, this completes the proof of  \textbf{Step 1}.
\end{proof}

In order to prove \textbf{Step 2}, we need the following estimates.
\begin{lemm}\label{Lem3: Hardy spaces}
	Let $\f{n}{n+\theta}<p\leq 1$ and $q\in (p,\vc]\cap [1,\vc]$. Then there exists a $\kappa > 0$ such that for any $0<\epsilon\leq 1$, we have
	\begin{equation}\label{eq1-Lem3Hardy}
	\Big\|\sup_{0< t\leq [\epsilon \rho_w(x,\mu)]^2}|(e^{-tL}-e^{-tL_0}f(x)|\,\Big\|^p_{L^p_w(\Ri^d)}\lesi \epsilon^{\kappa}\|f\|^p_{h^{p,q}_{at,\rho_w,\epsilon}(\Ri^d,w)}
	\end{equation}
	for all $f\in h^{p,q}_{at,\rho_w,\epsilon}(\Ri^d,w)$.
\end{lemm}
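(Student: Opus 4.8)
The estimate expresses that, for $t$ small relative to the critical scale, $e^{-tL}$ and $e^{-tL_0}$ are close as operators on the local Hardy space, and its proof rests on the comparison estimate of Theorem \ref{main 2}(iii) together with its Hölder refinement, Proposition \ref{Proposition: holder of qt}. Since $\{\lambda_j\}\in\ell^p$ with $p\le1$ and since $f\mapsto\sup_{t>0}\bigl(|e^{-tL}f|+|e^{-tL_0}f|\bigr)$ is bounded on $L^2_w(\Ri^d)$ (both heat kernels obey Gaussian bounds, hence are dominated by $\cM$), it suffices to prove that
\[
\Bigl\|\sup_{0<t\le[\epsilon\rho_w(x,\mu)]^2}|(e^{-tL}-e^{-tL_0})a(x)|\Bigr\|_{L^p_w(\Ri^d)}^p\lesssim\epsilon^{\kappa}
\]
for a single $(p,q,\rho_w,\epsilon)$-atom $a$ with $\supp a\subset B=B(x_0,r)$, $r\le\rho_0:=\rho_w(x_0,\mu)$, where $\kappa>0$ is to be chosen. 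Writing $q_t:=h_t-k_t\ge0$ for the kernel of $e^{-tL_0}-e^{-tL}$, so that $|(e^{-tL}-e^{-tL_0})a(x)|\le\int q_t(x,y)|a(y)|\,dw(y)$, I would split the $L^p_w$-norm over $5B$ and $\Ri^d\setminus5B$ (the enlargement $5B$ in place of $4B$ only serves to guarantee $|y-x_0|<|x-y|/4$ whenever $x\notin5B$, $y\in B$).

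\textbf{Local part ($5B$).} On $5B$ one has $\rho_w(x,\mu)\sim\rho_0$ by Proposition \ref{crit}(iii). Theorem \ref{main 2}(iii) gives $0\le q_t(x,y)\lesssim w(B(x,\sqrt t))^{-1}\exp(-|x-y|^2/ct)\,(\sqrt t/\rho_w(x,\mu))^{\delta}$, and the constraint $\sqrt t\le\epsilon\rho_w(x,\mu)$ turns the last factor into a genuine gain $\le\epsilon^{\delta}$, so that
\[
\sup_{0<t\le[\epsilon\rho_w(x,\mu)]^2}|(e^{-tL}-e^{-tL_0})a(x)|\lesssim\epsilon^{\delta}\,\cM a(x).
\]
Hölder's inequality on $5B$, the $L^q_w$-boundedness of $\cM$ (for $q>1$; the case $q=1$ via the coincidence of the spaces $h^{p,q}_{at,\rho_w,\epsilon}$ for different exponents), and the normalization $\|a\|_{L^q_w}\le w(B)^{1/q-1/p}$ then bound the $5B$-contribution by $C\epsilon^{\delta p}$, exactly as $I_1$ is handled in the proof of Step~1.

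\textbf{Far part ($\Ri^d\setminus5B$).} Here $|x-y|\sim|x-x_0|$ for $y\in B$, and I would distinguish two regimes as in Step~1. If $r\ge\epsilon\rho_0/4$ (no moment condition available), I combine Theorem \ref{main 2}(iii) with the elementary bound $w(B(x,\sqrt t))^{-1}\exp(-|x-y|^2/ct)\lesssim_M w(B(x,|x-y|))^{-1}(\sqrt t/|x-y|)^{M}$, take the supremum over $t\le[\epsilon\rho_w(x,\mu)]^2$ using $\sqrt t\le\epsilon\rho_w(x,\mu)$, invoke $\rho_w(x,\mu)\lesssim\rho_0(1+|x-x_0|/\rho_0)^{k_0/(k_0+1)}$ (Proposition \ref{crit}(iv)), $\|a\|_{L^1_w}\le w(B)^{1-1/p}$, the relation $\rho_0/r\lesssim1/\epsilon$, and a dyadic annular decomposition together with the doubling of $w$; for $M$ sufficiently large this gives $C\epsilon^{\kappa_1}$ for some $\kappa_1>0$. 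If instead $r<\epsilon\rho_0/4$, then $\int a\,dw=0$, so $(e^{-tL}-e^{-tL_0})a(x)=-\int_B[q_t(x,y)-q_t(x,x_0)]a(y)\,dw(y)$; by the symmetry of $q_t$ and Proposition \ref{Proposition: holder of qt},
\[
|q_t(x,y)-q_t(x,x_0)|\lesssim\min\Bigl\{\bigl(\tfrac{|y-x_0|}{\rho_w(x,\mu)}\bigr)^{\theta},\bigl(\tfrac{|y-x_0|}{\sqrt t}\bigr)^{\theta}\Bigr\}\,\frac{\exp(-|x-y|^2/ct)}{w(B(x,\sqrt t))},
\]
and the key observation is that $\sqrt t\le\epsilon\rho_w(x,\mu)$ forces $(|y-x_0|/\rho_w(x,\mu))^{\theta}\le\epsilon^{\theta}(|y-x_0|/\sqrt t)^{\theta}$, so the Hölder factor is $\le\epsilon^{\theta}(|y-x_0|/\sqrt t)^{\theta}$. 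Taking the supremum over $t$ and again absorbing the powers of $\sqrt t$ by the elementary heat-kernel bound yields
\[
\sup_{0<t\le[\epsilon\rho_w(x,\mu)]^2}|(e^{-tL}-e^{-tL_0})a(x)|\lesssim\epsilon^{\theta}\,\frac{r^{\theta}}{|x-x_0|^{\theta}}\,\frac{w(B)^{1-1/p}}{w(B(x_0,|x-x_0|))},
\]
and a dyadic decomposition of $\Ri^d\setminus5B$ with $w(B(x_0,2^jr))\lesssim2^{jn'}w(B)$ (any $n'>n$) gives $C\epsilon^{\theta p}$ as soon as $\theta p>n'(1-p)$, i.e. $p>n'/(n'+\theta)$; letting $n'\downarrow n$ this is precisely the hypothesis $p>n/(n+\theta)$. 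Collecting the three pieces and setting $\kappa:=\min\{\delta p,\kappa_1,\theta p\}$ completes the argument.

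\textbf{Expected main obstacle.} The delicate point is the no-cancellation far regime $r\ge\epsilon\rho_0/4$: there one must balance the smallness extracted from $\sqrt t\le\epsilon\rho_w(x,\mu)$ against the volume ratio $w(B(x_0,|x-x_0|))/w(B(x_0,r))$, which for $|x-x_0|\sim\rho_0$ can be as large as $(\rho_0/r)^{n'}\sim\epsilon^{-n'}$; this is absorbed precisely because the gained power of $\epsilon$ (arbitrarily large, through the choice of $M$) dominates. The reduction to a single atom, the dyadic bookkeeping via doubling of $w$, and checking the geometric hypotheses of Proposition \ref{Proposition: holder of qt} on $\Ri^d\setminus5B$ are routine.
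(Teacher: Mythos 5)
Your proof is correct and follows essentially the same route as the paper's: the same reduction to a single $(p,q,\rho_w,\epsilon)$-atom via the pointwise domination by the Hardy--Littlewood maximal function and $L^2_w$-boundedness; the same split into a local piece (handled by Theorem \ref{main 2}(iii), which gives the gain $(\sqrt t/\rho_w(x,\mu))^\delta\le\epsilon^\delta$, plus H\"older and the $L^q_w$-boundedness of $\cM$); and the same dichotomy on the far piece according to whether $r\ge\epsilon\rho_0/4$ (no cancellation, use Theorem \ref{main 2}(iii) together with Proposition \ref{crit}(iv) to trade the exponential for polynomial decay in $|x-x_0|$) or $r<\epsilon\rho_0/4$ (use $\int a\,dw=0$ and the H\"older continuity of $q_t$ from Proposition \ref{Proposition: holder of qt}, yielding the factor $\epsilon^\theta$ and the condition $p>n/(n+\theta)$). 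The only cosmetic differences are your use of $5B$ in place of $4B$ to cleanly secure the hypothesis $|y-x_0|<|x-y|/4$ of Proposition \ref{Proposition: holder of qt}, and in the cancellation case your preference for writing $1/\rho_w(x,\mu)\le\epsilon/\sqrt t$ directly rather than, as in the paper, first passing through the inequality $\exp(-|x-x_0|^2/(c[\epsilon\rho_w(x,\mu)]^2))\le C_N(\epsilon\rho_0/|x-x_0|)^N$; these are equivalent algebraic manipulations and lead to the identical final bound $(\epsilon r/|x-x_0|)^\theta w(B(x_0,|x-x_0|))^{-1}$.
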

\begin{proof}
	Observe that
	\[
	\sup_{0< t\leq [\epsilon \rho_w(x,\mu)]^2}|(e^{-tL}-e^{-tL_0}f(x)|\lesi \sup_{t>0}|e^{-tL}f(x)|\lesi \mathcal{M}f(x),
	\]
	where $\mathcal M$ is the Hardy--Littlewood maximal function.

	It follows that the operator
	\[
	f \mapsto \sup_{0< t\leq [\epsilon \rho_w(x,\mu)]^2}|(e^{-tL}-e^{-tL_0}f(x)|
	\]
	is bounded on $L^2_w(\Ri^d)$. Hence, it suffices to prove \eqref{eq1-Lem3Hardy} for all $(p,q,\rho_w,\epsilon)$ atoms. Let  $a$ be $(p,q,\rho_w,\epsilon)$ atom associated to a ball $B:=B(x_0,r)$. We write
	$$
	\begin{aligned}
	\Big\|\sup_{0< t\leq [\epsilon \rho_w(x,\mu)]^2}|(e^{-tL}-e^{-tL_0}f(x)|\,\Big\|^p_{L^p_w(\Ri^d)}&\leq \Big\|\sup_{0< t\leq [\epsilon \rho_w(x,\mu)]^2}|(e^{-tL}-e^{-tL_0}f(x)|\,\Big\|^p_{L^p_w(4B)}\\
	& \  \  \ +\Big\|\sup_{0< t\leq [\epsilon \rho_w(x,\mu)]^2}|(e^{-tL}-e^{-tL_0}f(x)|\,\Big\|^p_{L^p_w(\Ri^d\backslash 4B)}\\
	&=I_1+I_2.
	\end{aligned}
	$$
	Using \eqref{thm- difference 2 heat kernels}, H\"older's inequality and the $L^q_w$-boundedness of the Hardy-Littlewood maximal function $\mathcal{M}$, we get that
	$$
	\begin{aligned}
	I_1&\lesi \int_{4B}\Big[\sup_{0< t\leq [\epsilon \rho_w(x,\mu)]^2}\Big(\f{\sqrt{t}}{\rho_w(x,\mu)}\Big)^{\delta}\int_B\f{1}{w(B(x,\sqrt{t}))}\exp\Big(-\f{|x-y|^2}{ct}\Big)|a(y)|dw(y)\Big]^pdw(x)\\
	&\lesi \epsilon^{p\delta}w(B)^{1-p/q} \Big[\int_{4B}\Big[\sup_{0< t\leq [\epsilon \rho_w(x,\mu)]^2}\int_B\f{1}{w(B(x,\sqrt{t}))}\exp\Big(-\f{|x-y|^2}{ct}\Big)|a(y)|dw(y)\Big]^qdw(x)\Big]^{p/q}\\
	&\lesi \epsilon^{p\delta}w(B)^{1-p/q} \Big[\int_{4B}\Big[\mathcal{M}(|a|)(x)\Big]^qdw(x)\Big]^{p/q}\\
	&\lesi \epsilon^{p\delta}.
	\end{aligned}
	$$
	
	In order to take care of $I_2$, we consider the following two cases.
	
	\textbf{Case 1: $\epsilon\rho_w(x_0,\mu)/4\leq r\leq \epsilon\rho_w(x_0,\mu)$}.
	
	By \eqref{thm- difference 2 heat kernels} again, 
	$$
	\begin{aligned}
	I_2&=\int_{\Ri^d\backslash 4B}\Big[\sup_{0< t\leq [\epsilon \rho_w(x,\mu)]^2}\int_B \Big(\f{\sqrt{t}}{\rho_w(x,\mu)}\Big)^{\delta}\f{1}{w(B(x,\sqrt{t}))}\exp\Big(-\f{|x-y|^2}{ct}\Big)|a(y)|dw(y)\Big]^pdw(x)\\
	&\lesi \epsilon^{p\delta}\|a\|_{L^1}^p\int_{\Ri^d\backslash 4B}\Big[\f{1}{w(B(x_0,|x-x_0|))}\exp\Big(-\f{|x-x_0|^2}{c[\epsilon \rho_w(x,\mu)]^2}\Big)\Big]^pdw(x).
	\end{aligned}
	$$
	By Proposition \ref{crit}, we have
	\[
	\begin{aligned}
	\f{\epsilon\rho_w(x,\mu)}{|x-x_0|}&\lesi \f{\epsilon\rho_w(x_0,\mu)}{|x-x_0|}\Big(1+\f{|x-x_0|}{\rho_w(x_0,\mu)}\Big)^{\f{k_0}{k_0+1}}\\
	&\lesi \f{\epsilon\rho_w(x_0,\mu)}{|x-x_0|}\Big(1+\f{|x-x_0|}{\epsilon\rho_w(x_0,\mu)}\Big)^{\f{k_0}{k_0+1}}\\
	&\lesi \min\Big\{\f{\epsilon\rho_w(x_0,\mu)}{|x-x_0|}, \Big(\f{\epsilon\rho_w(x_0,\mu)}{|x-x_0|}\Big)^{\f{1}{k_0+1}}\Big\},
	\end{aligned}
	\]
	which implies that for any $N>0$ there exists a $C_N$ such that
	\begin{equation}
	\label{eq-inequlity e rho}
	\exp\Big(-\f{|x-x_0|^2}{c[\epsilon \rho_w(x,\mu)]^2}\Big)\le C_N \Big(\f{\epsilon\rho_w(x_0,\mu)}{|x-x_0|}\Big)^N.
	\end{equation}
	
	Inserting this into the above bound of $I_2$, we obtain, for $N>n(1-p)/p$, that
	$$
	\begin{aligned}
	I_2&\lesi \epsilon^{p\delta}w(B)^{p-1}\int_{\Ri^d\backslash 4B}\Big[\f{1}{w(B(x_0,|x-x_0|))}\Big(\f{\epsilon \rho_w(x_0,\mu)}{|x-x_0|}\Big)^N\Big]^pdw(x)\\
	&\lesi \epsilon^{p\delta}w(B)^{p-1}\int_{\Ri^d\backslash 4B}\Big[\f{1}{w(B(x_0,|x-x_0|))}\Big(\f{r}{|x-x_0|}\Big)^N\Big]^pdw(x)\\
	&\lesi \epsilon^{p\delta}.
	\end{aligned}
	$$
	
	\textbf{Case 2: $r< \epsilon\rho_w(x_0,\mu)/4$}.
	
	In this situation, since $\displaystyle \int a(y)dw(y)=0$, we have
	$$
	I_2=\int_{\Ri^d\backslash 4B}\Big|\sup_{0< t\leq [\epsilon \rho_w(x,\mu)]^2}\int_B (q_t(x,y)-q_t(x,x_0))a(y)dw(y)\Big|^pdw(x).
	$$
Owing to Proposition \ref{Proposition: holder of qt},
	$$
	\begin{aligned}
	I_2&=\int_{\Ri^d\backslash 4B}\Big[\sup_{0< t\leq [\epsilon \rho_w(x,\mu)]^2}\int_B \Big(\f{|y-x_0|}{\rho_w(x,\mu)}\Big)^{\theta} \f{1}{w(B(x,\sqrt{t}))}\exp\Big(-\f{|x-y|^2}{ct}\Big)|a(y)|dw(y)\Big]^pdw(x)\\
	&\lesi \int_{\Ri^d\backslash 4B}\Big[\int_B \Big(\f{r}{\rho_w(x,\mu)}\Big)^{\theta} \f{1}{w(B(x_0,|x-x_0|))}\exp\Big(-\f{|x-x_0|^2}{c[\epsilon\rho_w(x,\mu)]^2}\Big)|a(y)|dw(y)\Big]^pdw(x)\\
	&\lesi \int_{\Ri^d\backslash 4B}\Big[\int_B \Big(\f{\epsilon r}{|x-x_0|}\Big)^{\theta} \f{1}{w(B(x_0,|x-x_0|))}|a(y)|dw(y)\Big]^pdw(x)\\
	&\lesi \epsilon^{\theta p},
	\end{aligned}
	$$
	as long as $p>n/(n+\theta)$.
	This completes our proof.
\end{proof}

We are ready for the proof of Step 2.
\begin{proof}[\textbf{Proof of Step 2}:]
	Observe that for fixed numbers $\epsilon_1, \epsilon_2\in (0,1]$, there exists a $C=C(\epsilon_1,\epsilon_2)$ such that
	\begin{equation}\label{eq0-mainthm}
	C^{-1}\|\cdot\|_{h^{p,q}_{at,\rho_w,\epsilon_1}(\Ri^d,w)}\leq \|\cdot\|_{h^{p,q}_{at,\rho_w,\epsilon_2}(\Ri^d,w)} \leq C\|\cdot\|_{h^{p,q}_{at,\rho_w,\epsilon_1}(\Ri^d,w)}.
	\end{equation}
	For this reason, we need only to prove that  there exists an $\epsilon_0\in (0,1]$ so that
	\begin{equation}\label{eq1-mainthm}
	\|f\|_{h^{p,q}_{at,\rho_w,\epsilon_0}(\Ri^d,w)}\lesi \|f\|_{H^p_{L}(\Ri^d,w)}, \ \ \ \ f\in H^p_{L}(\Ri^d,w)\cap L_w^2(\Ri^d).
	\end{equation}

	Let $\{B_j\}_{j\in \mathbb{N}}$ and $\{\psi_j\}_{j\in \mathbb{N}}$ be families of balls and functions in Lemma \ref{xalpha psialpha}. For each $j\in \mathbb{N}$, $f\psi_j$ is supported in the ball $B_j=B(x_j,\rho_w(x_j,\mu))$ and $f\psi_j\in L^2_w(\Ri^d)$, which implies that $f\in h^p_{\ell}(\Ri^d,w)$ with $\ell =\rho_w(x_j,\mu)$. Applying Theorem \ref{thm1-local hardy spaces basic property}, we can decompose $f\psi_j$ into an atomic $(p,q,\rho_w,\epsilon)$-representation with $(p,q,\rho_w,\epsilon)$-atoms supported in $B^*_j$. Moreover we have, from Proposition \ref{crit}, the existence of $A_0$ so that 
	\[
	A_0^{-1}\rho_w(x_j,\mu)\leq  \rho_w(x,\mu)\leq A_0\rho_w(x_j,\mu) \ \ \ \text{for all $x\in B_j^*$ and all $ j\in\mathbb{N}$}.
	\] 
	This, in combination with Theorem \ref{thm- H rho and H rad}, yields
	\begin{equation*}
	\begin{aligned}
	\sum_{j\in \mathbb N}\|\psi_j f\|^p_{h^{p,q}_{at,\rho_w,\epsilon}(\Ri^d)}&\lesi \sum_{j\in \mathbb N}\Big\|\sup_{0<t<[\epsilon \rho_w(x_j,\mu)]^2}|e^{-tL_0}(\psi_j f)|\,\Big\|^p_{L^p_w(\Ri^d)}\\
	&\lesi \sum_{j\in \mathbb N}\Big\|\sup_{0<t<[\epsilon \rho_w(x_j,\mu)]^2}|e^{-tL_0}(\psi_j f)|\,\Big\|^p_{L^p_w(B_j^*)}\\
	& \ \ \ \ +\sum_{j\in \mathbb N}\Big\|\sup_{0<t<[\epsilon \rho_w(x_j,\mu)]^2}|e^{-tL_0}(\psi_j f)|\,\Big\|^p_{L^p_w(\Ri^d\backslash B_j^*)}.
	\end{aligned}
	\end{equation*}
	By Lemma \ref{Cor1: Hardy spaces},
	\[
	\sum_{j\in \mathbb N}\Big\|\sup_{0<t<[\epsilon \rho_w(x_j,\mu)]^2}|e^{-tL_0}(\psi_j f)|\,\Big\|^p_{L^p_w(\Ri^d\backslash B_j^*)}\lesi \epsilon^{\theta}\sum_{j\in \mathbb N}\|\psi_j f\|^p_{h^{p,q}_{at,\rho_w,\epsilon}(\Ri^d,w)}.
	\]
	By taking $\epsilon$ small enough, from these two estimates we infer
	\[
	\sum_{j\in \mathbb N}\|\psi_j f\|^p_{h^{p,q}_{at,\rho_w,\epsilon}(\Ri^d)}\lesi \sum_{j\in \mathbb N}\Big\|\sup_{0<t<[\epsilon \rho_w(x_j,\mu)]^2}|e^{-tL_0}\psi_j f|\,\Big\|^p_{L^p_w(B_j^*)}.
	\]
	From Proposition \ref{crit}, we can find $\tilde a_0$ such that
	\[
	\rho_w(x,\mu)\le \tilde a_0\rho_w(x_j,\mu)
	\]
	for all $x\in B^*_j$.
	
	Hence, setting $\tilde \epsilon =\tilde a_0 \epsilon$, then we have
	\begin{equation*}
	\begin{aligned}
	\sum_{j\in \mathbb N}\|\psi_j f\|^p_{h^{p,q}_{at,\rho_w,\epsilon}(\Ri^d)}	&\lesi \sum_{j\in \mathbb N}\Big\|\sup_{0<t<[\widetilde{\epsilon} \rho_w(\cdot,\mu)]^2}|e^{-tL_0}(\psi_j f)(\cdot)|\,\Big\|^p_{L^p_w(B_j^*)},
	\end{aligned}
	\end{equation*}
	which implies, thanks to Lemma \ref{xalpha psialpha},	 that
	\[
	\|f\|^p_{h^{p,q}_{at,\rho_w,\epsilon}(\Ri^d)}	\lesi \sum_{j\in \mathbb N}\Big\|\sup_{0<t<[\widetilde{\epsilon} \rho_w(\cdot,\mu)]^2}|e^{-tL_0}(\psi_j f)(\cdot)|\,\Big\|^p_{L^p_w(B_j^*)}.
	\]

	Therefore,
	\begin{equation*}
	\begin{aligned}
	\|f\|^p_{h^{p,q}_{at,\rho_w,\epsilon}(\Ri^d)}&\lesi  \sum_{j}\Big\|\sup_{0<t<[\widetilde{\epsilon} \rho_w(\cdot,\mu)]^2}|e^{-tL_0}(\psi_j f)(\cdot)-\psi_j(\cdot)e^{-tL_0} f(\cdot)|\,\Big\|^p_{L^p_w(B_j^*)}\\
	& \ \ \ +\sum_{j}\Big\|\sup_{0<t<[\widetilde{\epsilon} \rho_w(\cdot,\mu)]^2}|\psi_j(\cdot)[e^{-tL_0}-e^{-tL} ]f(\cdot)|\,\Big\|^p_{L^p_w(B_j^*)}\\
	& \ \ \ + \sum_{j}\Big\|\sup_{0<t<[\widetilde{\epsilon} \rho_w(\cdot,\mu)]^2}|\psi_j(\cdot)e^{-tL}f(\cdot)|\,\Big\|^p_{L^p_w(B_j^*)}.
	\end{aligned}
	\end{equation*}
	We now estimate the terms on the RHS of the inequality above.
	
	First, using Lemma \ref{xalpha psialpha}, 
	\[
	\begin{aligned}
	\sum_{j}\Big\|\sup_{0<t<[\widetilde{\epsilon} \rho_w(\cdot,\mu)]^2}|e^{-tL_0}(\psi_j f)(\cdot)&-\psi_j(\cdot)e^{-tL_0} f(\cdot)|\,\Big\|^p_{L^p_w(B_j^*)}\\&
	\lesi \Big\|\sum_{j}\sup_{0<t<[\widetilde{\epsilon} \rho_w(\cdot,\mu)]^2}|e^{-tL_0}(\psi_j f)(\cdot)-\psi_j(\cdot)e^{-tL_0} f(\cdot)|\,\Big\|^p_{L^p_w(\Ri^d)}\\
	&\lesi \epsilon^{\theta}\|f\|^p_{h^{p,q}_{at,\rho_w,\epsilon}(\Ri^d,w)},
	\end{aligned}
	\]
	where in the last step we used Lemma \ref{Lem2: Hardy spaces}.
	
	Secondly, by Lemma \ref{Lem3: Hardy spaces},
	\begin{equation*}
	\begin{aligned}
	\sum_{j}\Big\|\sup_{0<t<[\widetilde{\epsilon} \rho_w(\cdot,\mu)]^2}|\psi_j(\cdot)[e^{-tL_0}-e^{-tL} ]f(\cdot)|\,\Big\|^p_{L^p_w(B_j^*)}&\lesi \sum_{j}\Big\|\sup_{0<t<[\widetilde{\epsilon} \rho_w(\cdot,\mu)]^2}|[e^{-tL_0}-e^{-tL}]f|\,\Big\|^p_{L^p_w(\Ri^d)}\\
	&\lesi\epsilon^{\theta}\|f\|^p_{h^{p,q}_{at,\rho_w,\epsilon}(\Ri^d,w)}.
	\end{aligned}
	\end{equation*}
	
	Finally, by Proposition \ref{xalpha psialpha} again,
	\[
	\sum_{j}\Big\|\sup_{0<t<[\widetilde{\epsilon} \rho_w(\cdot,\mu)]^2}|\psi_j(\cdot)e^{-tL}f(\cdot)|\,\Big\|^p_{L^p_w(B_j^*)}\lesi \sum_{j}\Big\|\sup_{0<t<[\widetilde{\epsilon} \rho_w(\cdot,\mu)]^2}|e^{-tL}f|\,\Big\|^p_{L^p_w(\Ri^d)}.
	\]
	Consequently,
	\begin{equation}\label{eq2-lem3Hardy}
	\begin{aligned}
	\|f\|^p_{h^{p,q}_{at,\rho_w,\epsilon}(\Ri^d,w)}&\lesi \epsilon^{\theta}\|f\|^p_{h^{p,q}_{at,\rho_w,\widetilde{\epsilon}}(\Ri^d,w)}+\Big\|\sup_{0<t<[\widetilde{\epsilon} \rho_w(\cdot,\mu)]^2}|e^{-tL}f(\cdot)|\,\Big\|^p_{L^p_w(\Ri^d)}\\
	&\lesi \epsilon^{\theta}\|f\|^p_{h^{p,q}_{at,\rho_w,\widetilde{\epsilon}}(\Ri^d,w)}+\Big\|\sup_{t>0}|e^{-tL}f(\cdot)|\,\Big\|^p_{L^p_w(\Ri^d)}.
	\end{aligned}
	\end{equation}
    Taking $\epsilon$ small enough, 
    \[
    \|f\|^p_{h^{p,q}_{at,\rho_w,\epsilon}(\Ri^d,w)}\lesi \Big\|\sup_{t>0}|e^{-tL}f(\cdot)|\,\Big\|^p_{L^p_w(\Ri^d)}.
    \]
	
	This completes our proof.
\end{proof}

{\bf Acknowledgement.} The first-named author was supported by the research grant ARC DP170101060 from the Australian Research Council. The second-named and the third-named authors wish to express their sincere thanks to the support given by Vietnam’s
National Foundation for Science and Technology Development (NAFOSTED) under Project 101.02-2020.17.

\end{document}